\numberwithin{equation}{section}
\newtheoremstyle{thm}{10pt}{5pt}{}{}{\bfseries}{}{ }{\thmname{#1}
  \thmnumber{#2} \thmnote{(#3)}}
\theoremstyle{thm}
\newtheorem{theorem}{Theorem}[subsection]
\newtheorem{definition}[theorem]{Definition}
\newtheorem{lemma}[theorem]{Lemma}
\newtheorem{corollary}[theorem]{Corollary}
\newtheorem{proposition}[theorem]{Proposition}
\newtheorem{remark}[theorem]{Remark}
\theoremstyle{remark}
\newcommand{\iprod}[1]{\langle #1\rangle}
\newcommand{\norm}[1]{\|#1\|}
\renewcommand{\chi}{E}
\DeclareMathOperator{\dom}{dom}
\DeclareMathOperator{\ran}{ran}
\DeclareMathOperator{\Piso}{Piso}
\DeclareMathOperator{\spn}{span}
\begin{document}

\title[$C^\ast$-algebras of left cancellative semigroups]{Inverse semigroup $C^\ast$-algebras associated with left cancellative semigroups}
\author{Magnus Dahler Norling}
\email{magnus.dahler.norling@gmail.com}
\subjclass[2010]{Primary 46L05; Secondary 20M18, 20M99, 43A07, 06A12}
\keywords{$C^\ast$-algebras, left cancellative semigroups, inverse semigroups, semilattices, amenability}

\maketitle

\begin{abstract}
To each discrete left cancellative semigroup $S$ one may associate a certain inverse semigroup $I_l(S)$, often called the left inverse hull of $S$. We show how the full and the reduced $C^\ast$-algebras of $I_l(S)$ are related to the full and reduced semigroup $C^\ast$-algebras for $S$ recently introduced by Xin Li, and give conditions ensuring that these algebras are isomorphic. Our picture provides an enhanced understanding of Li's algebras.
\end{abstract}

\tableofcontents

\section{Introduction}
In \cite{li11}, Xin Li proposed a construction for the full $C^\ast$-algebra $C^\ast(S)$ of a discrete left cancellative semigroup $S$. For a semigroup $S$ that embeds into a group he also constructs a related $C^\ast$-algebra called $C^\ast_s(S)$. The reason one considers left cancellative semigroups is that these are the semigroups that can be faithfully represented as semigroups of isometries on Hilbert spaces. For instance, one can represent $S$ on $\ell^2(S)$ by isometries in this case. This representation is called the left regular representation of $S$ and generates what is called the Toeplitz algebra or reduced $C^\ast$-algebra of $S$, denoted $C^\ast_r(S)$. (One could of course consider right cancellative semigroups instead).

Murphy had previously constructed $C^\ast$-algebras of left cancellative semigroups, but these turned out to be very large. For instance, his $C^\ast$-algebra associated to $(\mathbb{Z}^+)^2$ is non-nuclear \cite{murphy96}. (See Li's article for more references). Li adds a few extra restrictions that make the algebras behave better. Especially he shows that $C^\ast(S)$ generalizes two important types of $C^\ast$-algebras: Nica's $C^\ast$-algebras for quasilattice ordered groups from \cite{nica92}, and the Toeplitz algebras associated with the ring of integers in a number field \cite{cuntz_deninger_laca11}.

Li also shows that a cancellative left reversible\footnote{A semigroup $S$ is left reversible if for any $s,t\in S$, $sS\cap tS\neq\varnothing$. This is also called the Ore condition.} semigroup $S$ is left amenable if and only if $C^\ast_s(S)$ and $C^\ast_r(S)$ are canonically isomorphic, but only given that the constructible right ideals of $S$ satisfy a certain technical requirement called independence. Note that Li's proof uses that left reversibility of a cancellative semigroup $S$ implies that $S$ embeds into a group and that there exists a character on $C^\ast_s(S)$.

Let $I(S)$ be the inverse semigroup of all partial bijections on $S$. For each $s\in S$, let $\lambda_s:S\to sS$ be given by $\lambda_s(t)=st$. Since $S$ is left cancellative, each $\lambda_s$ is a bijection. The set $\{\lambda_s\}_{s\in S}$ generates an inverse subsemigroup $I_l(S)\subset I(S)$ called the left inverse hull of $S$. We show that $I_l(S)$ is isomorphic to an inverse semigroup $V(S)$ of partial isometries generating $C^\ast_r(S)$. By considering the full and reduced $C^\ast$-algebras of $I_l(S)$ as for instance defined in Paterson's book \cite{paterson99} we get surjective $\ast$-homomorphisms
\[
\begin{array}{ccccccc}
C^\ast(S) &\xrightarrow{\eta}& C^\ast_0(I_l(S)) & \xrightarrow{\Lambda_0} & C^\ast_{r,0}(I_l(S)) &\xrightarrow{h} &C^\ast_r(S)
\end{array}
\]
The composition of these is the canonical $\ast$-homomorphism $C^\ast(S)\to C^\ast_r(S)$. The question of whether this is an isomorphism splits into three separate problems. When $S$ embeds into a group, we get the decomposition
\[
\begin{array}{ccccccccc}
C^\ast(S) &\xrightarrow{\pi_s}& C^\ast_s(S) & \xrightarrow{\simeq}& C^\ast_0(I_l(S)) & \xrightarrow{\Lambda_0} & C^\ast_{r,0}(I_l(S)) &\xrightarrow{h} &C^\ast_r(S)
\end{array}
\]
In particular, $C^\ast_s(S)$ and $C^\ast_0(I_l(S))$ are canonically isomorphic. 

A semigroup $S$ is said to satisfy Clifford's condition if for all $s,t\in S$, either $sS\cap tS=\varnothing$ or $sS\cap tS=rS$ for some $r\in S$. Any semigroup that is the positive cone in one of Nica's quasilattice ordered groups satisfies Clifford's condition. The $ax+b$-semigroup over an integral domain $R$ satisfies Clifford's condition if and only if every pair of elements in $R$ has a least common multiple. If $S$ satisfies Clifford's condition, $\eta$ is an isomorphism and the constructible right ideals of $S$ are independent. If $S$ is cancellative and satisfies Clifford's condition, or if $S$ embeds into a group and the constructible right ideals of $S$ are independent, then $h$ is an isomorphism.

Using Milan's work \cite{milan10} on weak containment for inverse semigroups we show that when $S$ embeds into a group $G$, $\Lambda_0$ is an isomorphism if and only if a certain Fell bundle over $G$ associated to $I_l(S)$ is amenable. In the special case when $S$ is left reversible, $\Lambda_0$ is an isomorphism if and only if $S$ is left amenable if and only if $C^\ast_0(I_l(S))$ is nuclear.

In the first part of the article we recall the algebraic theory of semigroups and inverse semigroups, and also look at an algebraic partial order and see how it is related to Nica's quasilattice ordered groups. We show that many of the properties of the positive cone in these groups can be defined in a more general context and remark that the algebraic order is not essential for the theory to work.

In the second part, we introduce the $C^\ast$-algebras associated to $S$ and $I_l(S)$, and prove the above stated results. In addition, we show that our construction generalizes a method used by Nica in \cite{nica94} to construct the $C^\ast$-algebra of a quasilattice ordered group from a certain inverse semigroup called a Toeplitz inverse semigroup.

We also prove a functoriality result for the construction $S\mapsto G(S)$ when $S$ is left reversible. Here $G(S)$ is the maximal group homomorphic image of $I_l(S)$. We use this to show that the construction $S\mapsto C^\ast(I_l(S))$ is functorial for homomorphisms into groups when $S$ is left reversible. The construction $S\mapsto G(S)$ originates from Rees' proof of Ore's Theorem: that all cancellative left reversible semigroups are group embeddable. An account of this theorem can be found in vol I, p. 35 of \cite{clifford_preston61} or in ch. 2.4 of \cite{lawson99}.

\subsection*{Acknowledgements}
We wish to thank Erik Bedos for fruitful discussions and for providing us with many helpful references. Thanks also to Mark Lawson for answering our questions about semigroups satisfying Clifford's condition.

\section{Semigroups}

\subsection{Semigroups and algebraic orders}

There are many sources on the algebraic theory of semigroups. See for instance \cite{clifford_preston61} or \cite{meakin11} and the references therein.

\begin{definition}
A \emph{semigroup} is a set $S$ together with a associative binary operation $\cdot:S\times S\to S$ written $(s,t)\mapsto st$ and an identity element\footnote{Usually, semigroups are not required to have identities, and semigroups with identities are called \emph{monoids}. We will however only talk about monoids in this article, and we prefer to call them semigroups.} $1\in S$. That is, for all $s,r,t\in S$, $s(rt)=(sr)t$ and $1s=s1=s$. Sometimes we write $1=1_S$.
\end{definition}

If $S$ has an element $z\in S$ such that $zs=sz=z$ for all $s\in S$, we will write $z=0=0_S$. If $S$ is a semigroup, define $S^0=S$ if $S$ already has a $0$ element, and otherwise let $S^0$ be the semigroup $S\cup\{0\}$ with extended multiplication rule $s0=0s=0$ for all $s\in S^0$.

This choice of notation can be confusing for instance in the case of $(\mathbb{Z}^+,+)$ where we have $1_{\mathbb{Z}^+}=0$, and where $\mathbb{Z}^+$ does not have an element $0_{\mathbb{Z}^+}$ in the sense of the above definition, but the notation is otherwise very convenient. (In our notation, $\mathbb{Z}^+$ denotes $\{0,1,2,\ldots\}$, while $\mathbb{N}$ denotes $\{1,2,\ldots\}$).

\begin{definition}
A \emph{homomorphism} between semigroups $S$, $S'$ is a function $f:S\to S'$ such that for all $s,t\in S$, $f(st)=f(s)f(t)$ and $f(1_S)=1_{S'}$. The homomorphism $f$ is a \emph{$0$-homomorphism} if in addition $f(0_S)=0_{S'}$ (and this term is only defined for homomorphisms between semigroups with zeroes).
\end{definition}

\begin{definition}
A semigroup $S$ is \emph{left cancellative} if for every $s,r,t\in S$, $sr=st$ implies $r=t$. Equivalently, for every $s\in S$, the map $\lambda_s:S\to sS$ given by $\lambda_s(t)= st$ is bijective. In a left cancellative semigroup, if $ss'=1$, then $ss's=1s=s1$, so $s's=1$, that is every element with a left (or right) inverse is invertible. One can similarly define right cancellativity. $S$ is \emph{cancellative} if it is both left and right cancellative.
\end{definition}

\begin{definition}
A \emph{congruence} on a semigroup $S$ is an equivalence relation $\sim$ such that for all $s,t,r,\in S$, $s\sim t$ implies $sr\sim tr$ and $rs\sim rt$. One can show that $S/\sim$ is again a semigroup and that there is a homomorphism $S\to S/\sim$ sending elements to equivalence classes. In fact, the homomorphism theorems for semigroups say that every surjective homomorphism can be constructed in this way.
\end{definition}

\begin{definition}
A subset $X\subset S$ is a \emph{right ideal} if for all $t\in X$ and $s\in S$, $ts\in X$.
\end{definition}

For $X\subset S$ and $s\in S$, define $s^{-1}(X)=\{t:st\in X\}$ and $sX=\{st:t\in X\}$. For simplicity, we will sometimes write $s^{-1}X$ for $s^{-1}(X)$. If $X\subset S$ is a right ideal, then so are $sX$ and $s^{-1}X$. The right ideals on the form $sS$ are called the \emph{principal right ideals} of $S$.

Let $\preceq$ be the relation on $S$ given by $s\preceq t$ if there exists an $r\in S$ such that $s=tr$. This relation is reflexive and transitive, so it gives a preorder on $S$. If it is antisymmetric, then it is a partial order called the \emph{algebraic order} on $S$ and we say that $S$ is \emph{algebraically ordered}. Note that $\preceq$ is often written with the opposite symbol $\succeq$ or $\geq$ (such as in Nica's work \cite{nica92}), but this is just a matter of convenience. For instance we have $5\preceq 4$ in $(\mathbb{Z}^+,+)$ with our notation.

For $s,t\in S$, $s\preceq t$ is easily seen to be equivalent to $s\in tS$ and $sS\subset tS$. It is also equivalent to $t^{-1}(\{s\})\neq\varnothing$, and if $S\subset G$ where $G$ is a group, it is equivalent to $t^{-1}s\in S$. Note that $1_S$ is a maximal element for $\preceq$ and that if $0_S$ exists it is a minimal element.

\begin{lemma}
Let $S$ be a left cancellative semigroup. Then $S$ is algebraically ordered if and only if $1$ is the only invertible element in $S$.
\end{lemma}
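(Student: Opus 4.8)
The plan is to verify both implications directly from the definition $s\preceq t\iff s=tr$ for some $r\in S$, using nothing beyond left cancellativity and the observation (recorded in the definition of left cancellativity) that in a left cancellative semigroup an element with a one-sided inverse is invertible, with $1$ its two-sided inverse.

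For the forward direction, assume $S$ is algebraically ordered, i.e.\ $\preceq$ is antisymmetric, and let $u\in S$ be invertible with inverse $v$, so $uv=vu=1$. Then $1=uv$ exhibits $1\preceq u$, while $u=1\cdot u$ exhibits $u\preceq 1$ (this is just the general fact, already noted, that $1_S$ is maximal for $\preceq$). Antisymmetry now forces $u=1$, so $1$ is the only invertible element.

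For the converse, assume $1$ is the only invertible element of $S$, and suppose $s\preceq t$ and $t\preceq s$; we must show $s=t$. Pick $r,q\in S$ with $s=tr$ and $t=sq$. Substituting the second equation into the first gives $s=sqr$; since also $s=s\cdot 1$ and $S$ is left cancellative, cancelling $s$ on the left yields $qr=1$. By the remark in the definition of left cancellativity, $q$ is then invertible (with inverse $r$), so $q=1$ by hypothesis, whence $t=sq=s$. Thus $\preceq$ is antisymmetric and $S$ is algebraically ordered.

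Each direction is a two-line manipulation, so there is no serious obstacle; the only step requiring a moment's care is the cancellation $s=sqr\implies qr=1$, which is precisely where left cancellativity is used, in tandem with the fact that a one-sidedly invertible element of a left cancellative semigroup is genuinely invertible.
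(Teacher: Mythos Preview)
Your proof is correct and follows essentially the same approach as the paper's. The only cosmetic difference is that the paper phrases antisymmetry via the equivalent condition $rS=tS\Rightarrow r=t$, whereas you work directly with the defining relation $s=tr$; the substitution-and-cancel step and the use of ``one-sided inverse $\Rightarrow$ invertible'' are identical in both arguments.
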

\begin{proof}
Suppose $rS=tS$ for some $r,t\in S$. Then there are $s,s'\in S$ such that $rs=t$ and $ts'=r$. So $ts's=t$. By left cancellation with $t$, this gives us $s's=1$. Then $s$ has a left inverse, so it is invertible since $S$ was left cancellative. If $1$ is the only invertible element in $S$, $s=1$ and this implies $r=t$.

On the other hand, suppose there are $s,s'\in S$ with $s's=1$. Then $s'sS\subset s'S\subset S=s'sS$, so if $S$ is algebraically ordered, $s'=1$.
\end{proof}

For instance when $S$ is a subsemigroup of a group $G$, $S$ is algebraically ordered if and only if $S\cap S^{-1}=\{1\}$.

\subsection{Inverse semigroups}

Inverse semigroups are a large topic. See for instance \cite{lawson99} or \cite{paterson99} and references therein. In this section we will just give a short overview of the main concepts that we need.

\begin{definition}
A semigroup $P$ is an \emph{inverse semigroup} if for every $p\in P$, there exists a unique element $p^\ast\in P$ such that $pp^\ast p=p$ and $p^\ast pp^\ast=p^\ast$.
\end{definition}

It follows from this uniqueness property of the $\ast$-operation that for any semigroup homomorphism $f:P\to Q$ between inverse semigroups, $f(p^\ast)=f(p)^\ast$ for any $p\in P$. Let $L$ be the set of idempotents in the inverse semigroup $P$. Then $L=\{p^\ast p:p\in P\}=\{pp^\ast:p\in P\}$. One can show that $L$ is a commutative subsemigroup of $P$, so $L$ is what is called a semilattice.

\begin{definition}
A \emph{semilattice} is a commutative semigroup where every element is idempotent.
\end{definition}

\begin{lemma}
Let $L$ be a semilattice, and let $a,b\in L$. Then $a\preceq b$ if and only if $ba=a$. Hence $\preceq$ is a partial order on $L$.
\end{lemma}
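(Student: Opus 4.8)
The plan is to prove the equivalence $a\preceq b\iff ba=a$ directly from the definitions, using only the semilattice axioms (commutativity and idempotency), and then deduce antisymmetry of $\preceq$.

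First I would prove the implication $ba=a\implies a\preceq b$. This is immediate: if $ba=a$, then writing $a = b a$ exhibits $a$ as a product $b\cdot a$ with $a\in L$, so by the definition of $\preceq$ on a semigroup we have $a\preceq b$. For the converse, suppose $a\preceq b$, so that $a = b r$ for some $r\in L$. Then I would compute $ba = b(br) = (bb) r = b r = a$, using associativity, commutativity (to group the two $b$'s, though here it is not even needed), and the idempotency $bb=b$. Hence $ba=a$, which establishes the equivalence.

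Next I would check that $\preceq$ is a partial order on $L$. Reflexivity and transitivity are already known from the general discussion in the excerpt (every $\preceq$ is a preorder on any semigroup), but I should note that reflexivity can also be seen here via $aa=a$. For antisymmetry, suppose $a\preceq b$ and $b\preceq a$. By the equivalence just proved, the first gives $ba=a$ and the second gives $ab=b$. Using commutativity of $L$, $ab=ba$, so $a = ba = ab = b$. Therefore $\preceq$ is antisymmetric, hence a partial order on $L$.

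There is no real obstacle here; this is a routine unwinding of definitions, and the only thing to be careful about is invoking commutativity at the right moment (it is essential for antisymmetry, and it is what makes the equivalence symmetric in a suitable sense, even though the forward direction $ba=a\implies a\preceq b$ uses only idempotency). I would present the whole argument in a few lines.
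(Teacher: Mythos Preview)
Your proof is correct and follows essentially the same approach as the paper. Your computation for the implication $a\preceq b\implies ba=a$ (namely $ba=b(br)=(bb)r=br=a$) is in fact a bit more direct than the paper's, which first writes $a=aa=bca$ and then argues $bca=bbca=ba$; but the underlying idea---use idempotency of $b$---is identical, and the antisymmetry argument is the same.
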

\begin{proof}
If $a=ba$, then $a\in bL$, so $a\preceq b$. Suppose $a\in bL$, so $a=bc$ for some $c\in L$. Then $a=aa=bca$. So $bca=bbca=ba$, which implies $a=ba$. If $a\preceq b$ and $b\preceq a$, then $a=ba=ab=b$. So $\preceq$ is a partial order.
\end{proof}

It also follows that for $a,b\in L$, $ab$ is the greatest lower bound of $a$ and $b$. On the other hand, if $L$ is a partially ordered set where any finite subset has a unique greatest lower bound and one defines $ab$ to be the greatest lower bound of $\{a,b\}$, then $L$ is a semilattice with the product $(a,b)\mapsto ab$. We will later study partially ordered semigroups $S$, and for this it is useful to let $s\wedge t$ mean the greatest lower bound of $s$ and $t$ if it exists, while $st$ means the already existing semigroup product of $s$ and $t$. These two products only coincide if $S$ is a semilattice.

\begin{remark}
There exists a partial order defined on inverse semigroups called the natural partial order. It does not in general coincide with what we have called the algebraic order. We will not use the natural partial order explicitly in this paper.
\end{remark}

The perhaps most important example of an inverse semigroup is the semigroup $I(X)$ of all partially defined bijective maps on some set $X$. By a partially defined bijective map on $X$, we mean a bijective function $f:\dom(f)\to \ran(f)$ where $\dom(f)$ and $\ran(f)$ are subsets of $X$. The product $fg$ of $f,g\in I(X)$ is defined such that $\dom(fg)=g^{-1}(\dom(f))$ and $fg(x)=f(g(x))$ for all $x\in\dom(fg)$. Note that this product can result in the empty function, which acts as a $0$ for $I(X)$. The $\ast$-operation is given by function inversion. For any $f\in I(X)$, $f^\ast f=i_{\dom(f)}$ where $i_{\dom(f)}:\dom(f)\to\dom(f)$ is the identity map.

The Wagner-Preston Theorem states that any inverse semigroup $P$ can be faithfully represented as a subsemigroup of $I(P)$ as follows: Let $\tau:P\to I(P)$ be given such that for $p\in P$, $\dom(\tau(p))=\{q\in P:p^\ast pq=q\}$, and define $\tau(p)(q)=pq$ for all $q\in\dom(\tau(p))$.

Another important class of inverse semigroups are semigroups of partial isometries in a $C^\ast$-algebra. Note that in general the product of two partial isometries does not have to be a partial isometry. Two partial isometries can be part of the same inverse semigroup if and only if their initial and final projections commute.

The following concepts are very important in the theory of inverse semigroups.

\begin{definition}\label{def:eastunitary}
An inverse semigroup $P$ is \emph{$E$-unitary} if for every $p,q\in P$, $pq=q$ implies $p\in L$. It is \emph{$E^\ast$-unitary} (also called $0$-$E$-unitary) if for every $p,q\in P$, $pq=q$ and $q\neq 0$ implies $p\in L$.
\end{definition}

Note that if $P$ is an $E$-unitary inverse semigroup with $0$, then it is a semilattice. Note also that we can assume without loss of generality that the $q$ in either defintion is idempotent if we want to. Multiply the equation $pq=q$ on the right with $q^\ast$. This gives us $pqq^\ast=qq^\ast$ where $qq^\ast$ is idempotent. Recall that for any semigroup $S$,  $S^0=S$ if $S$ already has a $0$ element, and otherwise $S^0$ is the semigroup $S\cup\{0\}$ with extended multiplication rule $s0=0s=0$ for all $s\in S^0$.

\begin{definition}
A \emph{grading} of the inverse semigroup $P$ is a map $\varphi:P^0\to G^0$, where $G$ is a group, such that $\varphi^{-1}(\{0\})=\{0\}$ and for all $p,q\in P$, $\varphi(pq)=\varphi(p)\varphi(q)$ as long as $pq\neq 0$. $P$ is \emph{strongly $E^\ast$-unitary} if it has a grading $\varphi$ such that $\varphi^{-1}(\{1_G\})=L\setminus\{0\}$. Such a grading is sometimes said to be \emph{idempotent pure}.
\end{definition}

Note that if $\varphi:P^0\to G^0$ is a grading of $P$, $L\setminus\{0\}\subset \varphi^{-1}(\{1_G\})$ always. Note also that if $P$ is strongly $E^\ast$-unitary, then it is $E^\ast$-unitary. It turns out that if $P$ does not have a $0$, all these concepts are equivalent.

\begin{definition}\label{def:grouphomomorphicimage}
Define a relation $\sim$ on $P$ by $p\sim q$ if $pr=qr$ for some $r\in P$ (if and only if $pr=qr$ for some $r\in L$). Then $\sim$ is a congruence, and  $P/\sim$ is a group denoted $G(P)$. Let $\alpha_P:P\to G(P)$ be the quotient homomorphism. Then $P$ is $E$-unitary if and only if $\alpha_P^{-1}(1_{G(P)})=L$. $G(P)$ is often called the \emph{maximal group homomorphic image} of $P$.
\end{definition}

We will need the following lemma later:

\begin{lemma}\label{lem:gisomid}
Let $f:P\to Q$ be a surjective homomorphism between inverse semigroups. Let $L(P)$ and $L(Q)$ denote the respective semilattices of idempotents in $P$ and $Q$. Suppose the restriction of $f$ to $L(P)$ is an isomorphism onto $L(Q)$. Then $f$ is an isomorphism if and only if $f^{-1}(L(Q))=L(P)$
\end{lemma}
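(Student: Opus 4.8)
The plan is to dispatch one implication by a one-line observation and to put the work into the converse. If $f$ is an isomorphism, then for any $p\in P$ we have $f(p)\in L(Q)$ iff $f(p)^2=f(p)$ iff $f(p^2)=f(p)$ iff $p^2=p$ (injectivity of $f$) iff $p\in L(P)$; hence $f^{-1}(L(Q))=L(P)$. For the converse assume that $f|_{L(P)}$ is an isomorphism onto $L(Q)$ and that $f^{-1}(L(Q))=L(P)$; since $f$ is surjective by hypothesis, it suffices to show $f$ is injective. So take $p,q\in P$ with $f(p)=f(q)$ and aim to prove $p=q$.

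The key point is that the hypothesis $f^{-1}(L(Q))=L(P)$ forces certain words in $p,q$ to be idempotent, and then injectivity of $f$ on $L(P)$ can be applied to them. First, $f(p^\ast q)=f(p)^\ast f(q)=f(q)^\ast f(q)\in L(Q)$, so $e:=p^\ast q\in f^{-1}(L(Q))=L(P)$; being idempotent it is self-adjoint, so $e=e^\ast=q^\ast p$. Second, $f(pp^\ast)=f(p)f(p)^\ast=f(q)f(q)^\ast=f(qq^\ast)$, and since $pp^\ast$ and $qq^\ast$ both lie in $L(P)$ on which $f$ is injective, we conclude $pp^\ast=qq^\ast$; write $k$ for this common idempotent.

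The proof then closes with a short computation: $pe=p(p^\ast q)=(pp^\ast)q=kq=(qq^\ast)q=q$, and symmetrically $qe=q(q^\ast p)=(qq^\ast)p=kp=(pp^\ast)p=p$. Combining these and using $e^2=e$ gives $p=qe=(pe)e=pe^2=pe=q$, so $f$ is injective and therefore an isomorphism. I do not expect a genuine obstacle here; the only thing to be careful about is the bookkeeping with the $\ast$-operation and making sure the injectivity of $f|_{L(P)}$ is invoked only on elements already known to be idempotent — which is precisely what $f^{-1}(L(Q))=L(P)$ provides. The one nontrivial idea is recognising that $p^\ast q$ and $pp^\ast$ must land in $L(P)$, after which the identities $pe=q$ and $qe=p$ make everything collapse.
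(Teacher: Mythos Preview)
Your proof is correct and follows essentially the same approach as the paper: both show that certain products (your $p^\ast q$ and $pp^\ast$, the paper's $pq^\ast$ and $q^\ast p$) are idempotent via the hypothesis $f^{-1}(L(Q))=L(P)$, then use injectivity of $f$ on $L(P)$ to identify them. The only cosmetic difference is the closing: you finish with the direct computation $p=qe=(pe)e=pe=q$, while the paper derives $pq^\ast p=p$ and $q^\ast pq^\ast=q^\ast$ and then invokes the uniqueness of the inverse in an inverse semigroup to conclude $p^\ast=q^\ast$.
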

\begin{proof}
The only if part is trivial. Suppose $f^{-1}(L(Q))=L(P)$. Let $p,q\in P$ with $f(p)=f(q)$. Then $f(pq^\ast)=f(qq^\ast)\in L(Q)$, so by assumption $pq^\ast$ is idempotent. Since $f$ is an isomorphism restricted to $L(P)$, $pq^\ast=qq^\ast$, so $q^\ast p q^\ast=q^\ast$. Similarly, $f(q^\ast)=f(p^\ast)$, so $q^\ast p=p^\ast p$, and $pq^\ast p=p$. Thus $p=q$ by the uniqueness property for these relations in an inverse semigroup.
\end{proof}
\subsection{The semilattice $J(S)$, Clifford's condition and independence of constructible right ideals.}\label{sec:semilattice}

We will be interested in the semilattice $J(S)$ of constructible right ideals in the left cancellative semigroup $S$ given by
\[
J(S)=\left\{\bigcap_{j=1}^N t_{j1}^{-1}s_{j1}\cdots t_{jn_j}^{-1}s_{jn_j}S: N,n_j\in\mathbb{N}, s_{jk},t_{jk}\in S\right\}
\]
We will actually see in Lemma \ref{lem:simplifiedjs} that
\[
J(S)=\left\{t_{1}^{-1}s_{1}\cdots t_{n}^{-1}s_{n}S: n\in\mathbb{N}, s_{i},t_{i}\in S\right\}
\]
Here the semilattice product on $J(S)$ is given by set intersection. To motivate this study, we can reveal that $J(S)$ is isomorphic to a semilattice of projections generating the diagonal subalgebra of the $C^\ast$-algebra of the left regular representation of $S$ (also called the Toeplitz algebra of $S$ or $C^\ast_r(S)$). It is also the semilattice of idempotents in the left inverse hull of $S$. We will establish these facts later. This semilattice plays an important part in Li's theory \cite{li11}. Li's $\mathcal{J}$ is the same as our $J(S)\cup\{\varnothing\}\simeq J(S)^0$.

\begin{lemma}\label{lem:semigroupwedge}
Let $S$ be an algebraically ordered semigroup and let $s,t\in S$. If $sS\cap tS=rS$ for some $r\in S$, then $s\wedge t$ exists and equals $r$. Conversely, if $s\wedge t$ exists, then $(s\wedge t)S=sS\cap tS$.
\end{lemma}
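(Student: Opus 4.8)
The plan is to reduce everything to definition-chasing, using the characterization of $\preceq$ recorded just before the statement: $u\preceq v$ iff $u\in vS$, and this already forces $uS\subseteq vS$ (writing $u=vr$ gives $uS=vrS\subseteq vS$). After that the two assertions are just a comparison between the poset notion of greatest lower bound and inclusions of principal right ideals.

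For the first assertion, assume $sS\cap tS=rS$. Since $1_S\in S$ we have $r=r1_S\in rS=sS\cap tS$, so $r\in sS$ and $r\in tS$, i.e.\ $r\preceq s$ and $r\preceq t$; thus $r$ is a lower bound of $\{s,t\}$. If $u\in S$ is any lower bound, then $u\preceq s$ and $u\preceq t$ give $uS\subseteq sS$ and $uS\subseteq tS$, hence $uS\subseteq sS\cap tS=rS$; in particular $u\in uS\subseteq rS$, so $u\preceq r$. Therefore $r$ is a greatest lower bound of $\{s,t\}$, and since $S$ is algebraically ordered (so $\preceq$ is a genuine partial order) it is the unique one, i.e.\ $s\wedge t=r$.

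For the converse, put $w=s\wedge t$. From $w\preceq s$ and $w\preceq t$ we get $wS\subseteq sS$ and $wS\subseteq tS$, hence $wS\subseteq sS\cap tS$. Conversely, if $x\in sS\cap tS$, then $x\in sS$ and $x\in tS$ give $x\preceq s$ and $x\preceq t$, so $x$ is a lower bound of $\{s,t\}$ and therefore $x\preceq w$, i.e.\ $x\in wS$; this shows $sS\cap tS\subseteq wS$. Combining the two inclusions gives $(s\wedge t)S=wS=sS\cap tS$.

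There is essentially no obstacle here: the proof is a routine unwinding of definitions. The only points needing (mild) attention are the passage between the two descriptions of $\preceq$ — that the membership $u\in vS$ already entails the ideal inclusion $uS\subseteq vS$ — and the observation that the hypothesis of algebraic orderedness is precisely what allows one to speak of \emph{the} greatest lower bound in the first assertion (in the converse, existence of $s\wedge t$ is assumed outright).
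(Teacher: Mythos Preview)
Your proof is correct and follows essentially the same approach as the paper's own proof, translating back and forth between $u\preceq v$, $u\in vS$, and $uS\subseteq vS$ to compare greatest lower bounds with intersections of principal right ideals. If anything, you are slightly more careful: you explicitly verify that $r$ is a lower bound of $s$ and $t$ (via $r\in rS=sS\cap tS$) before checking it is greatest, whereas the paper leaves this step implicit.
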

\begin{proof}
First, suppose $r'\preceq s,t$. Then $r'S\subset sS\cap tS=rS$, so $r'\preceq r$, and therefore $r$ is the greatest lower bound of $s$ and $t$, i.e. $r=s\wedge t$.

Next, if $s\wedge t$ exists, then by definition $(s\wedge t)S\subset sS\cap tS$. Let $r\in sS\cap tS$. Then $r\preceq s,t$, so $r\preceq s\wedge t$, and $r\in (s\wedge t)S$, so $(s\wedge t)S=sS\cap tS$.
\end{proof}

\begin{lemma}\label{lem:equalunion}
Let $S$ be a semigroup, and let $s_1,\ldots, s_n\in S$. If $\bigcup_{i=1}^n s_iS=rS$ for some $r\in S$, then $rS=s_iS$ for at least one $1\leq i\leq n$.
\end{lemma}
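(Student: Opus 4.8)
The plan is to exploit the single fact that $S$ is a monoid, so that $r = r\cdot 1 \in rS$. Since by hypothesis $rS = \bigcup_{i=1}^n s_iS$, the element $r$ must lie in one of the pieces, say $r \in s_{i_0}S$. Unwinding this gives $r = s_{i_0}t$ for some $t\in S$, i.e. $r \preceq s_{i_0}$, which is equivalent to $rS \subseteq s_{i_0}S$ (as recorded just before Lemma~\ref{lem:semigroupwedge}).

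For the reverse inclusion I would simply note that $s_{i_0}S \subseteq \bigcup_{j=1}^n s_jS = rS$. Combining the two inclusions yields $rS = s_{i_0}S$, which is the claim. No induction on $n$ is needed, and left cancellativity is not used; the statement holds for an arbitrary (monoidal) semigroup.

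There is essentially no obstacle here: the only point one must not overlook is that a union of principal right ideals being itself principal forces one of the summands to exhaust the union, and this is forced precisely because $r$ is a member of its own principal right ideal $rS$. (If semigroups were not assumed to have identities this could fail, so it is worth flagging the use of $1\in S$ explicitly.)
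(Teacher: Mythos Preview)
Your proof is correct and follows essentially the same argument as the paper: both use that $r\in rS$ lands in some $s_jS$, giving $rS\subset s_jS\subset\bigcup_i s_iS=rS$. Your explicit flagging of the use of the identity element is a small but worthwhile clarification beyond what the paper writes.
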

\begin{proof}
$rS\subset\bigcup_{i=1}^n s_nS$ is equivalent to $r\in \bigcup_{i=1}^n s_nS$ which implies that $r\in s_jS$ for some $j$. Then $rS\subset s_jS\subset \bigcup_{i=1}^n s_iS=rS$, so $rS=s_jS$.
\end{proof}

\begin{definition}
We say that a semigroup $S$ satisfies \emph{Clifford's condition}\footnote{This is not the same concept as a Clifford semigroup. Clifford's condition is a term coined by Mark Lawson because it plays an important role in the construction of 0-bisimple inverse semigroups, and Clifford was the first to use this in \cite{clifford53}. See also \cite{lawson97} for more on this.} if for any $s,t\in S$, $sS\cap tS=\varnothing$, or there exists an $r\in S$ such that $sS\cap tS=rS$.
\end{definition}

For instance, all free or free abelian semigroups satisfy Clifford's condition. We will see more examples below.

\begin{definition}\label{def:ucondition}
Following Li \cite{li11}, we say that $J(S)$ is \emph{independent} or that \emph{the constructible right ideals of $S$ are independent} if for any $X_1,\ldots, X_n,Y\in J(S)$, $\bigcup_{i=1}^n X_n=Y$ implies that $X_i=Y$ for at least one $1\leq i\leq n$.
\end{definition}

\begin{proposition}\label{prop:semigrouplattice}
Let $S$ be a left cancellative semigroup. The following two conditions are equivalent.
\begin{enumerate}
\item $S$ satisfies Clifford's condition
\item For every $s,t\in S$ with $t^{-1}(sS)$ nonempty, there is some $r\in S$ such that $t^{-1}(sS)=rS$.
	\newcounter{enumi_saved}
	\setcounter{enumi_saved}{\value{enumi}}
\end{enumerate}
These conditions impliy that $J(S)\cup\{\varnothing\}=\{sS:s\in S\}\cup\{\varnothing\}$ and that $J(S)$ is independent.
If $S$ is algebraically ordered, then (i) is equivalent to the following statement.
\begin{enumerate}
	\setcounter{enumi}{\value{enumi_saved}}
\item Every pair of elements in $S$ that have a common lower bound have a greatest lower bound.
\end{enumerate}
This implies that when $S$ is an algebraically ordered semigroup satisfying Clifford's condition, $(S^0,\wedge)$ is a semilattice and is isomorphic as a semilattice to $J(S)\cup\{\varnothing\}\simeq J(S)^0$.
\end{proposition}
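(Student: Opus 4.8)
The plan is to work through the four assertions in the order they are stated.

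\emph{First, the equivalence (i)$\Leftrightarrow$(ii).} I would use left cancellation as a dictionary between sets of the form $t^{-1}(sS)$ and intersections $sS\cap tS$. For (i)$\Rightarrow$(ii): if $t^{-1}(sS)\neq\varnothing$, choose $u$ with $tu\in sS$, note $tu\in tS\cap sS$, apply Clifford's condition to get $tS\cap sS=rS$, write $r=tr'$ (possible since $r\in tS$), and verify $t^{-1}(sS)=r'S$ by the two obvious inclusions: if $v=r'w$ then $tv=rw\in sS$, and if $tv\in sS$ then $tv\in tS\cap sS=tr'S$, so $v\in r'S$ after left cancelling $t$. For (ii)$\Rightarrow$(i): if $sS\cap tS\neq\varnothing$, a witness $sa=tb$ puts $b$ in $t^{-1}(sS)$, so (ii) gives $t^{-1}(sS)=r'S$, and the same two inclusions yield $sS\cap tS=tr'S$; the empty case is trivial.

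\emph{Next, the consequences for $J(S)$.} I would show that $\mathcal P:=\{sS:s\in S\}\cup\{\varnothing\}$ is closed under the operations out of which $J(S)$ is built: it contains $S=1S$; it is closed under $X\mapsto sX$ since $s(rS)=srS$; it is closed under $X\mapsto t^{-1}(X)$ by condition (ii); and it is closed under finite intersections by condition (i) (in each case $\varnothing$ is handled trivially, as $s\varnothing=t^{-1}(\varnothing)=\varnothing\cap X=\varnothing$). Since every element of $J(S)$ is obtained from $S$ by finitely many of these operations, $J(S)\subseteq\mathcal P$; and $\{sS:s\in S\}\subseteq J(S)$ holds at once (take $N=n_1=1$, $t_{11}=1$, $s_{11}=s$), so $J(S)\cup\{\varnothing\}=\mathcal P$. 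Independence then follows immediately from Lemma~\ref{lem:equalunion}: given $\bigcup_iX_i=Y$ in $J(S)$, drop the empty $X_i$ (if all are empty, $Y=\varnothing=X_i$), write the rest as $s_iS$ and $Y$ as $rS$, and Lemma~\ref{lem:equalunion} forces $Y=rS=s_jS=X_j$ for some $j$.

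\emph{Then, the algebraically ordered case.} Here $r$ is a common lower bound of $s,t$ exactly when $r\in sS\cap tS$, and by Lemma~\ref{lem:semigroupwedge} $s\wedge t$ exists exactly when $sS\cap tS$ is a principal right ideal. Hence (i) --- every nonempty $sS\cap tS$ is principal --- is literally the statement that every pair with a common lower bound has a greatest lower bound, which is (iii); this gives (i)$\Leftrightarrow$(iii). For the final isomorphism, under (i) and (iii) I would define $\varphi\colon S^0\to J(S)^0$ by $\varphi(s)=sS$ for $s\in S$ and $\varphi(0)=\varnothing$. Well-definedness and injectivity use that $sS=tS\Rightarrow s=t$ in a left cancellative algebraically ordered semigroup (the earlier lemma on such semigroups) and that $s\in sS\neq\varnothing$; surjectivity is the previous paragraph; and, extending $\wedge$ to all of $S^0$ by setting $s\wedge t=0$ when $sS\cap tS=\varnothing$, Lemma~\ref{lem:semigroupwedge} gives $\varphi(s\wedge t)=sS\cap tS=\varphi(s)\cap\varphi(t)$ in every case (including those involving $0$). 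Since $J(S)^0$ is a semilattice under intersection (identity $S$, zero $\varnothing$) and $\varphi$ is a bijection intertwining the products, $(S^0,\wedge)$ is forced to be a semilattice and $\varphi$ a semilattice isomorphism onto $J(S)^0$.

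\emph{The main obstacle} will be the closure step in the second paragraph --- not its content, which reduces immediately to (i) and (ii), but making sure the defining expression for $J(S)$ is correctly read as an iterated application of exactly those three operations to $S$ (so that, e.g., $t^{-1}s$ means $t^{-1}(s(\,\cdot\,))$), after which the induction is purely formal. Everything else is routine bookkeeping with Lemmas~\ref{lem:semigroupwedge}, \ref{lem:equalunion}, and left cancellation.
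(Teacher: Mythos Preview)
Your proposal is correct and follows essentially the same route as the paper's own proof: the same dictionary $tt^{-1}(X)=tS\cap X$ (which you use implicitly via your two inclusions) for (i)$\Leftrightarrow$(ii), the same closure/induction argument for $J(S)\cup\{\varnothing\}=\{sS:s\in S\}\cup\{\varnothing\}$, Lemma~\ref{lem:equalunion} for independence, and Lemma~\ref{lem:semigroupwedge} together with the map $s\mapsto sS$, $0\mapsto\varnothing$ for the algebraically ordered part. The only cosmetic difference is that the paper computes $t^{-1}(sS)=rS$ via a chain of equalities rather than your pair of inclusions.
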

\begin{proof}
(i)$\Rightarrow$(ii): Since $S$ is left cancellative, then for any $X\subset S$ we have $tt^{-1}(X)=tS\cap X$ and $t^{-1}(tX)=X$. If $t^{-1}(sS)$ is nonempty, then so is $tt^{-1}(sS)=sS\cap tS$. Let $q\in S$ be such that $qS=sS\cap tS$. Since $q\in tS$, $t^{-1}(\{q\})$ is nonempty and contains a unique element $r$ since $S$ was left cancellative. Now we have
\begin{align*}
rS=(t^{-1}\{q\})S&=\{uv: u,v\in S, tu=q\}\\
&=t^{-1}\{tuv:u,v\in S, tu=q\}\\
&=t^{-1}\{qv:v\in S\}=t^{-1}(qS)\\
&=t^{-1}(sS\cap tS)=t^{-1}(sS)
\end{align*}

(ii)$\Rightarrow$(i): If $tS\cap sS$ is nonempty, then so is $tt^{-1}(sS)$ and $t^{-1}(sS)$. By assumption, $t^{-1}(sS)=rS$, so $tS\cap sS=(tr)S$.

That (i)+(ii) implies $J(S)\cup\{\varnothing\}=\{sS:s\in S\}\cup\{\varnothing\}$ is a simple induction proof. That this again implies that $J(S)$ is independent follows from Lemma \ref{lem:equalunion}: If $\bigcup_{i=1}^n s_iS=Y\in J(S)$, $Y=rS$ for some $r\in S$, and Lemma \ref{lem:equalunion} gives that $rS=s_iS$ for at least one $i$.

(i)$\Leftrightarrow$(iii): Let $s,t\in S$.  Then $sS\cap tS\neq\varnothing$ if and only if there is some $r\in S$ such that $r\preceq s,t$ if and only if $s$ and $t$ have a common lower bound. By Lemma \ref{lem:semigroupwedge} $s$ and $t$ have a greatest lower bound $s\wedge t$ if and only if $sS\cap tS=(s\wedge t)S$. 

By going to $S^0$, we have $sS^0\cap tS^0=\{0\}=0S^0$ if and only if $sS\cap tS=\varnothing$. Otherwise $sS^0\cap tS^0=rS^0$ for some $r\in S$. The isomorphism from $(S^0,\wedge)$ to $J(S)\cup\{\varnothing\}$ is then constructed by sending $s$ to $sS$ for $s\in S$ and $0$ to $\varnothing$. This is injective since $S$ was algebraically ordered.
\end{proof}

\begin{definition}\label{def:quasilatticeordered}
Let $G$ be a group and $S\subset G$ a subsemigroup. If $S$ is algebraically ordered and generates $G$, it induces a partial order on all of $G$ by $g\leq h$ iff $g^{-1}h\in S$. Nica \cite{nica92} calls $(G,S)$ for \emph{quasilattice ordered} if in addition any finite family of elements in $G$ that have a common upper bound in $S$ has a least common upper bound in $S$. $S$ is called the \emph{positive cone} in $(G,S)$.
\end{definition}

Note that when restricted to $S$, $\leq$ is the same as our $\succeq$. This shows that if $S$ is a positive cone in a quasilattice orderd group, any pair in $S$ that have a common lower bound in $S$ with respect to $\preceq$ have a greatest lower bound in $S$. So $S$ satisfies Clifford's condition by Proposition \ref{prop:semigrouplattice} and it follows that $J(S)$ is independent. Note that Li proves in \cite{li11} that the positive cones of the quasilattice ordered groups have independent constructible right ideals.

\vspace{10pt}
We can give a description of when the $ax+b$ semigroup over an integral domain $R$ satisfies Clifford's condition. The $ax+b$ semigroup over $R$, denoted $R\rtimes R^\times$ is defined to be the set $R\times R^\times$ with product $(b,a)(d,c)=(b+ad,ac)$. Here $R^\times=R\setminus\{0\}$. The reason one considers integral domains is that the $ax+b$ semigroups over these are left cancellative.

Consider first the multiplicative semigroup $(R^\times,\cdot)$. This is a semigroup since $R$ has no zero divisors. We see that for $a,b\in R^\times$, $a\succeq b$ if and only if $a$ divides $b$.

\begin{definition}
A \emph{common multiple} of $a,b\in R^\times$ is an element $c$ of $R^\times$ that is divided by $a$ and $b$. A \emph{least common multiple} of $a$ and $b$ is a common multiple $c$ such that if $c'$ is a common multiple of $a$ and $b$, then $c$ divides $c'$.
\end{definition}

It follows by a similar argument to that in Lemma \ref{lem:semigroupwedge} that $aR^\times\cap bR^\times =cR^\times$ if and only if $c$ is a least common multiple of $a$ and $b$. Note that since $R$ is commutative, $ab\in aR^\times\cap bR^\times\neq\varnothing$. So $R^\times$ satisfies Clifford's condition if and only if every pair in $R^\times$ has a least common multiple (see also Theorem 2.1 in \cite{chapman_glaz00}). Such an integral domain $R$ is often called a GCD domain because one can show that every pair has a greatest common divisor if and only if every pair has a least common multiple. See \cite{chapman_glaz00} for a detailed discussion of GCD domains. They are also discussed in \cite{bourbaki72} where they are called pseudo-Bezout domains. The next lemma is stated without proof in Li's article, but we include it for completeness.

\begin{lemma}
Let $R$ be a ring. For any subrings $I,J\subset R$ and $b,d\in R$, either $(b+I)\cap (d+J)=\varnothing$ or there is some $x\in R$ such that $(b+I)\cap (d+J)=x+I\cap J$.
\end{lemma}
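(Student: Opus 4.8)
The plan is to observe that the multiplicative structure of $R$ plays no role whatsoever: the only thing we use is that $I$ and $J$ (being subrings) are in particular additive subgroups of $(R,+)$, and that the intersection of two additive subgroups is again an additive subgroup. So the statement is really the elementary fact that two cosets of subgroups of an abelian group are either disjoint or their intersection is a coset of the intersection of the two subgroups.

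First I would dispose of the empty case, which requires nothing. So assume $(b+I)\cap(d+J)\neq\varnothing$ and fix any element $x$ in this intersection. Then $x\in b+I$, i.e. $x-b\in I$, and since $I$ is an additive subgroup this forces $b+I=x+I$ (the two cosets share a point, hence coincide); symmetrically $x\in d+J$ gives $d+J=x+J$. Thus $(b+I)\cap(d+J)=(x+I)\cap(x+J)$, and it remains to identify this with $x+(I\cap J)$.

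For the final identification I would just chase elements: for $y\in R$,
\[
y\in(x+I)\cap(x+J)\iff y-x\in I\ \text{and}\ y-x\in J\iff y-x\in I\cap J\iff y\in x+(I\cap J),
\]
which gives $(x+I)\cap(x+J)=x+(I\cap J)$ and completes the argument, the element $x$ being the one exhibited in the statement.

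There is essentially no obstacle here; the only thing worth a remark is that $I\cap J$ is again an additive subgroup (indeed a subring) of $R$, so that ``$x+(I\cap J)$'' is a coset of the same type as the objects we started with, matching the intended application to $J(S)$ in the ring case.
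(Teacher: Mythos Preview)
Your proof is correct and follows essentially the same approach as the paper: pick any $x$ in the nonempty intersection, observe that $b+I=x+I$ and $d+J=x+J$, and conclude $(b+I)\cap(d+J)=(x+I)\cap(x+J)=x+(I\cap J)$. The only difference is that you spell out the last equality $(x+I)\cap(x+J)=x+(I\cap J)$ explicitly, whereas the paper leaves it implicit.
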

\begin{proof}
Suppose $(b+I)\cap (d+J)\neq\varnothing$. Then there are $y\in I$ and $z\in J$ such that $b+y=d+z$. Write $x=b+y=d+z$. Then $x+I=b+y+I= b+I$ and $x+J=d+z+J=d+J$. So $(b+I)\cap (d+J)=(x+I)\cap (x+J)=x+I\cap J$.
\end{proof}

\begin{proposition}\label{prop:axpbclifford}
Let $R$ be an integral domain. Then $R\rtimes R^\times$ satisfies Clifford's condition if and only if $R$ is a GCD domain.
\end{proposition}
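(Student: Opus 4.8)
The plan is to compute the principal right ideals of $S := R\rtimes R^\times$ explicitly and thereby reduce Clifford's condition to the existence of least common multiples in $R^\times$. The first step is to observe that for $(b,a)\in S$ one has
\[
(b,a)S=(b+aR)\times aR^\times ,
\]
because as $(d,c)$ runs over $S$ the product $(b,a)(d,c)=(b+ad,ac)$ has first coordinate running over the coset $b+aR$ and, independently, second coordinate running over $aR^\times$; here one uses that $R$ is a domain, so that $c\mapsto ac$ is a bijection $R^\times\to aR^\times$ and in particular $aR^\times=aR\setminus\{0\}$.

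Next I would compute, for $(b,a),(d,c)\in S$,
\[
(b,a)S\cap(d,c)S=\big((b+aR)\cap(d+cR)\big)\times\big(aR^\times\cap cR^\times\big).
\]
By the preceding lemma (applied to the additive subgroups $aR$ and $cR$) the first factor is either empty or equals $x+(aR\cap cR)$ for some $x\in R$; if it is empty the whole intersection is empty and Clifford's condition is vacuously met. The second factor, on the other hand, is never empty, since it contains $ac$. So the question reduces to: when is a set of the form $\big(x+(aR\cap cR)\big)\times\big(aR^\times\cap cR^\times\big)$ a principal right ideal $(x',a')S=(x'+a'R)\times a'R^\times$? Since both sides are nonempty, comparing second coordinates forces $aR^\times\cap cR^\times=a'R^\times$, and because $R$ is a domain this is the same as $aR\cap cR=a'R$ (both sides contain $0$, and deleting $0$ from each recovers the multiplicative versions); by the observation recorded just before the proposition, this says exactly that $a'$ is a least common multiple of $a$ and $c$. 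Conversely, if $a$ and $c$ admit an lcm $a'$, then $aR\cap cR=a'R$ and one may take $x'=x$, so the intersection is $(x,a')S$.

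Putting these together, $S$ satisfies Clifford's condition if and only if every pair $a,c\in R^\times$ — each of which has the common multiple $ac$ — has a least common multiple, i.e. if and only if $R$ is a GCD domain. For the "only if" implication it is cleanest to test Clifford's condition on the ideals $(0,a)S$ and $(0,c)S$: their intersection $(aR\cap cR)\times(aR^\times\cap cR^\times)$ is nonempty, hence by hypothesis equals some $(x,a')S$, and reading off the second coordinate gives $aR\cap cR=a'R$, so $a'$ is the desired lcm. I do not anticipate a genuine obstacle here; the only thing to watch is the bookkeeping that separates the additive coset factor (which may be empty) from the multiplicative factor (which cannot be), together with the domain-dependent equivalence $aR\cap cR=a'R\iff aR^\times\cap cR^\times=a'R^\times$.
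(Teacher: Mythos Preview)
Your proof is correct and follows essentially the same route as the paper: compute $(b,a)S=(b+aR)\times aR^\times$, split the intersection into an additive coset factor handled by the preceding lemma and a multiplicative factor $aR^\times\cap cR^\times$, and for the converse direction test Clifford's condition on $(0,a)S\cap(0,c)S$. The only cosmetic difference is that the paper phrases the reduction as ``$R\rtimes R^\times$ satisfies Clifford's condition iff $R^\times$ does'' before invoking the lcm characterization, whereas you go straight to the lcm; the underlying argument is identical.
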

\begin{proof}
We show that $R\rtimes R^\times$ satisfies Clifford's condition if and only if $R^\times$ satisfies Clifford's condition. Suppose $R^\times$ satisfies Clifford's condition. Note that since $R$ is a commutative ring, $aR$ is an ideal of $R$ for every $a\in R$. Let $a,b,c,d\in R$. Then
\begin{align*}
(b,a)R\rtimes R^\times\cap (d,c)R\rtimes R^\times&=[(b+aR)\times aR^\times]\cap[(d+cR)\times cR^\times]\\
&=[(b+aR)\cap (d+cR)]\times [aR^\times \cap cR^\times]
\end{align*}
If this set is nonempty, $(b+aR)\cap (d+cR)$ is nonempty, so by the previous lemma there exists some $x\in R$ satisfying $(b+aR)\cap (d+cR)=x+aR\cap cR$. Moreover, $aR^\times\cap cR^\times\neq \varnothing$, so since $R^\times$ satisfies Clifford's condition, there is some $y\in R^\times$ satisfying $aR^\times\cap cR^\times=yR^\times$. This also implies $aR\cap cR=yR$. So we get
\[
(b,a)R\rtimes R^\times\cap (d,c)R\rtimes R^\times=(x+yR)\times yR^\times=(x,y)R\rtimes R^\times
\]

Suppose $R\rtimes R^\times$ satisfies Clifford's condition and let $a,c\in R^\times$. Since $aR^\times\cap cR^\times\neq\varnothing$,
\[
(0,a)(R\rtimes R^\times)\cap (0,c)(R\rtimes R^\times)= (aR\cap cR)\times (aR^\times\cap cR^\times)\neq \varnothing
\]
It follows that there exist $y\in R^\times$ and $x\in R$ (one may take $x=0$) such that
\[
(0,a)(R\rtimes R^\times)\cap (0,c)(R\rtimes R^\times)=(x,y)R\rtimes R^\times
\]
This implies that $aR^\times\cap cR^\times=yR^\times$.
\end{proof}

Li shows that when $R$ is a Dedekind domain, $J(R\rtimes R^\times)$ is independent. Every Dedekind domain that is also a GCD domain is a principal ideal domain. One way to see this is to use that every nontrivial ideal in a Dedekind domain $R$ is on the form $c^{-1}(aR)$ for some $c,a\in R$. This is for instance proved in \cite{li11}. Note that Li denotes $c^{-1}(aR)$ as $((c^{-1}a)\cdot R)\cap R$. This comes from viewing $c^{-1}a$ as an element of the field of fractions of $R$. Applying statement (ii) in Proposition \ref{prop:semigrouplattice} to the semigroup $R^\times$ one can deduce that if $R$ is also a GCD domain, any nontrivial ideal in $R$ is on the form $aR$ for some $a\in R$. This is the definition of a principal ideal domain.

There exist Dedekind domains that are not principal ideal domains. An example of this is $\mathbb{Z}[\sqrt{10}]$ as seen on p. 407 in \cite{hungerford74}. This shows that not every left cancellative semigroup with independent constructible right ideals satisfies Clifford's condition. On the other hand, every Dedekind domain is Noetherian (Theorem 6.10 in \cite{hungerford74}), but not every GCD domain is Noetherian. So the integral domain $R$ does not have to be a Dedekind domain for $J(R\rtimes R^\times)$ to be independent. Examples of non-Noetherian GCD domains can be found in \cite{chapman_glaz00}.

\section{$C^\ast$-theory}

\subsection{The $C^\ast$-algebras of an inverse semigroup}

Let $P$ be an inverse semigroup. We want to recall some common constructions for $C^\ast$-algebras that are generated by representations of $P$ by partial isometries. This is a short account of the theory. A more thorough account can for instance be found in \cite{paterson99} or \cite{duncan_paterson85}. One may construct such $C^\ast$-algebras by associating them to certain groupoids, but we won't use this approach in the present paper.

Let $\{\delta_p\}_{p\in P}$ be the canonical basis of $\ell^2(P)$ satisfying
\[
\iprod{\delta_p,\delta_q}=\begin{cases}1\mbox{ if }p=q\\0\mbox{ otherwise }\end{cases}
\]
Let $\mathbb{C}P$ be the vector space consisting of formal sums
\[
\sum_{i=1}^n a_ip_i
\]
for any $n\in\mathbb{N}$, $a_i\in\mathbb{C}$ and $p_i\in P$. Define an involution on $\mathbb{C}P$ by
\[
\left(\sum_{i=1}^n a_ip_i\right)^\ast=\sum_{i=1}^n \overline{a_i}p_i^\ast
\]
and a product by
\[
\left(\sum_{i=1}^n a_ip_i\right)\left(\sum_{j=1}^m b_jq_j\right)=\sum_{i=1}^n\sum_{j=1}^m a_ib_jp_iq_j
\]
These operations make $\mathbb{C}P$ a $\ast$-algebra. The left regular representation of $\mathbb{C}P$ is defined to be the map $\Lambda:\mathbb{C}P\to B(\ell^2(P))$ given by
\[
\Lambda(p)\delta_q=
\begin{cases}
\delta_{pq}\mbox{ if }p^\ast pq=q\\
0\mbox{ otherwise }
\end{cases}
\]
Then $\Lambda$ can be shown to be a faithful $\ast$-representation of $\mathbb{C}P$. Define $C^\ast_r(P)$ to be the closure of the image of $\Lambda$ with respect to the operator norm.

One way to construct the full $C^\ast$-algebra of $P$ is to show that $\mathbb{C}P$ is dense in the convolution algebra $\ell^1(P)$. One then lets $C^\ast(P)$ be the universal $C^\ast$-enveloping algebra of the Banach $\ast$-algebra $\ell^1(P)$. The left regular representation $\Lambda$ extends to a $\ast$-homomorphism $\Lambda:C^\ast(P)\to C^\ast_r(P)$. 

$C^\ast(P)$ is universal for representations of $P$ by partial isometries. If $A$ is a $C^\ast$-algebra, $\Piso(A)$ is the set of partial isometries in $A$ and $f:P\to \Piso(A)$ is a homomorphism onto a subsemigroup of $\Piso(A)$, then there is a $\ast$-homomorphism $\pi:C^\ast(P)\to A$ such that $\pi(p)=f(p)$ for each $p\in P$. This implies that if $P,Q$ are two inverse semigroups, then every homomorphism $f:P\to Q$ extends to a $\ast$-homomorphism $\pi_f:C^\ast(P)\to C^\ast(Q)$.

Note that if $P$ has a $0$, then $\Lambda(0)\delta_0=\delta_0$ and $\Lambda(0)\delta_p=0$ for $p\neq 0$. So $\Lambda(0)\neq 0$ is a one dimensional projection. This is undesireable in some of our later applications, but it is not too difficult go around the problem. If $0_P$ exists, then $\mathbb{C}0_P$ is an ideal in $\mathbb{C}P$, so it is an ideal in $C^\ast(P)$, and $\Lambda(\mathbb{C}0_P)$ is an ideal in $C^\ast_r(P)$. Let $C^\ast_0(P)=C^\ast(P)/\mathbb{C}0_P$ and $C^\ast_{r,0}(P)=C^\ast_r(P)/\Lambda(\mathbb{C}0_P)$.

Since $\Lambda$ sends $a\in C^\ast(P)$ to $\Lambda(\mathbb{C}0_P)$ if and only if $a\in\mathbb{C}0_P$, $\Lambda$ defines a $\ast$-homomorphism $\Lambda_0:C^\ast_0(P)\to C^\ast_{r,0}(P)$. Moreover, if $P$ and $Q$ are inverse semigroups and $f:P^0\to Q^0$ is a $0$-homomorphism, then $\pi_f$ pushes down to $\pi_{f,0}:C^\ast_0(P)\to C^\ast_0(Q)$. It is important to note that if $P$ is an inverse semigroup without $0$, then $C^\ast_0(P^0)\simeq C^\ast(P)$ and $C^\ast_{r,0}(P^0)\simeq C^\ast_r(P)$.

\begin{definition}
The inverse semigroup $P$ is said to have \emph{weak containment} if $\Lambda:C^\ast(P)\to C^\ast_r(P)$ is an isomorphism. Clearly $\Lambda$ is an isomorphism if and only if $\Lambda_0$ is an isomorphism. See \cite{milan10} for a recent study of weak containment for inverse semigroups.
\end{definition}

\begin{proposition}
Let $P$ be a commutative inverse semigroup. Then $P$ has weak containment.
\end{proposition}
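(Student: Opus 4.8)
\section*{Proof proposal}

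The plan is to exploit that commutativity of $P$ forces both $C^\ast(P)$ and $C^\ast_r(P)$ to be commutative $C^\ast$-algebras. Since $\Lambda\colon C^\ast(P)\to C^\ast_r(P)$ has closed, dense range it is already surjective, so weak containment amounts to injectivity of $\Lambda$; and for a surjection of commutative $C^\ast$-algebras this is equivalent to $\ker\Lambda\subseteq\ker\psi$ for every character $\psi$ of $C^\ast(P)$, because characters of a commutative $C^\ast$-algebra separate points. As $\mathbb{C}P$ is dense in $C^\ast(P)$ and both $\psi$ and $a\mapsto\|\Lambda(a)\|$ are continuous, it therefore suffices to prove $|\psi(a)|\le\|\Lambda(a)\|$ for every $a\in\mathbb{C}P$.

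The first step is to bring in the classical structure theory of commutative inverse semigroups (they are Clifford semigroups): one checks that $pp^\ast=p^\ast p$ for every $p\in P$, so, writing $L$ for the semilattice of idempotents, $P$ is the disjoint union of the abelian groups $G_e=\{p\in P:p^\ast p=e\}$, $e\in L$, each with unit $e$, with bonding homomorphisms $\phi_{f,e}\colon G_f\to G_e$, $p\mapsto pe$, for $e\preceq f$ satisfying $pq=\phi_{f,e}(p)\,q$ for $p\in G_f$ and $q\in G_e$. Accordingly $\ell^2(P)=\bigoplus_{e\in L}\ell^2(G_e)$, and the defining formula for $\Lambda$ shows each summand $\ell^2(G_e)$ is invariant under $\Lambda(\mathbb{C}P)$, with $\Lambda(p)$ acting on $\ell^2(G_e)$ as left translation by $\phi_{f,e}(p)\in G_e$ when $e\preceq f=pp^\ast$ and as $0$ otherwise. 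Restriction to the $e$-block thus gives, via the left regular representation of $G_e$, a $\ast$-homomorphism $\Phi_e\colon\mathbb{C}P\to C^\ast_r(G_e)$ with $\|\Phi_e(a)\|\le\|\Lambda(a)\|$.

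The second step feeds a character into this picture. A nonzero character $\psi$ of $C^\ast(P)$ restricts to a bounded semicharacter of $P$, so $\psi(e)\in\{0,1\}$ for $e\in L$ and $|\psi(p)|^2=\psi(pp^\ast)$; hence $F=\{e\in L:\psi(e)=1\}$ is a nonempty filter in $L$, $\psi$ restricts to a character of each $G_e$ with $e\in F$, and these characters are compatible with the bonding maps since $\psi(\phi_{f,e}(p))=\psi(pe)=\psi(p)$ when $e\in F$. Given $a=\sum_{i=1}^{n}a_ip_i$ with $p_i\in G_{f_i}$, the indices with $\psi(p_i)\neq 0$ are exactly those with $f_i\in F$; if there are none then $\psi(a)=0$ and we are done, and otherwise we let $e_0$ be the finite product of those $f_i$, so that $e_0\in F$ and $e_0\preceq f_i$ for all such $i$. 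Evaluating the character $\psi|_{G_{e_0}}$ on $\Phi_{e_0}(a)$ returns $\sum_{i:\,f_i\succeq e_0}a_i\psi(p_i)$; every term here with $f_i\notin F$ vanishes and every $i$ with $f_i\in F$ occurs, so the value is exactly $\psi(a)$. Since $G_{e_0}$ is abelian, hence amenable, $C^\ast(G_{e_0})=C^\ast_r(G_{e_0})\cong C(\widehat{G_{e_0}})$ and its characters are point evaluations dominated by the reduced norm, whence $|\psi(a)|\le\|\Phi_{e_0}(a)\|\le\|\Lambda(a)\|$, as required.

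The only genuinely nontrivial input is the structure theorem for commutative inverse semigroups together with the weak containment of abelian groups, both classical (see e.g.\ \cite{clifford_preston61} or \cite{lawson99}); everything else is bookkeeping. The one point requiring care is the choice of $e_0$ and the verification that $\psi|_{G_{e_0}}\circ\Phi_{e_0}$ reproduces $\psi$ on all of $a$, which rests on the summands indexed outside the filter $F$ dropping out. One could alternatively deduce the proposition from general amenability results for inverse semigroups, the universal groupoid of $P$ being a bundle of abelian groups, but the argument above is self-contained.
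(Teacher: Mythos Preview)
Your argument is correct. The key steps---the Clifford decomposition $P=\bigsqcup_{e\in L}G_e$, the block decomposition of $\ell^2(P)$ and the resulting compressions $\Phi_e$, and the filter/character bookkeeping that picks out a single $e_0$ on which $\psi$ is recovered---all check out. The one delicate point, that terms with $f_i\notin F$ but $e_0\preceq f_i$ still contribute zero (because $|\psi(p_i)|^2=\psi(f_i)=0$), is exactly what you flag in your last paragraph, and it holds.

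The paper's own proof, by contrast, is a one-line citation: it observes that a commutative inverse semigroup is a Clifford semigroup with abelian (hence amenable) maximal subgroups and invokes Paterson's 1978 result that such semigroups have weak containment. So the underlying mechanism is the same---Clifford structure plus amenability of the group components---but you have unpacked Paterson's black box in this special case and given a direct character-by-character estimate against the reduced norm. What you gain is a self-contained argument that makes transparent exactly where commutativity is used (both to get the Clifford structure and to identify $C^\ast(G_{e_0})$ with $C^\ast_r(G_{e_0})$); what the paper gains is brevity and a pointer to the general Clifford-semigroup phenomenon. Your closing remark about the groupoid picture is apt: that is a third route, and all three ultimately rest on the amenability of the abelian fibres.
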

\begin{proof}
This does for instance follow from Paterson's results in \cite{paterson78} since every commutative inverse semigroup $P$ is a so-called Clifford semigroup and any subgroup of $P$ has to be amenable.
\end{proof}

\begin{corollary}\label{cor:maxembeddednorm}
Let $P$ be an inverse semigroup, and let $L$ be the subsemilattice of idempotents in $P$. Let $D$ be the $C^\ast$-subalgebra of $C^\ast_r(P)$ generated by $\Lambda(L)$. Then $D$ is canonically isomorphic to $C^\ast_r(L)$ and $C^\ast(L)$. A similar result holds if we look at the subalgebra generated by $\Lambda_0(L)$ in $C^\ast_{r,0}(P)$.
\end{corollary}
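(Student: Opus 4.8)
The plan is to compute the restriction of the left regular representation $\Lambda$ of $P$ to the commutative subsemigroup $L$ explicitly, show that it generates a copy of $C^\ast_r(L)$, and then invoke the preceding proposition (commutative inverse semigroups have weak containment) to replace $C^\ast_r(L)$ by $C^\ast(L)$. The starting observation is that $\Lambda|_L$ is a \emph{diagonal} representation on $\ell^2(P)$: for $e\in L$ we have $e^\ast=e=e^\ast e$, so the defining formula for $\Lambda$ gives $\Lambda(e)\delta_q=\delta_q$ if $eq=q$ and $\Lambda(e)\delta_q=0$ otherwise, so every $\delta_q$ is a common eigenvector for $\Lambda(L)$. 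Setting $\omega_q(e)=1$ when $eq=q$ and $\omega_q(e)=0$ otherwise, this says $\Lambda|_L=\bigoplus_{q\in P}\omega_q$. The key point is the identity $\omega_q=\omega_{qq^\ast}$: right-multiplying $eq=q$ by $q^\ast$ gives $e(qq^\ast)=qq^\ast$, and conversely $e(qq^\ast)=qq^\ast$ gives $eq=e(qq^\ast)q=(qq^\ast)q=q$; since $qq^\ast\in L$, it follows that $\{\omega_q:q\in P\}=\{\omega_f:f\in L\}$ (the inclusion $\supseteq$ because $ff^\ast=f$ for $f\in L$). Now these $\omega_f$, $f\in L$, are exactly the characters afforded by the basis vectors $\delta_f$ in the left regular representation $\Lambda_L$ of $L$ on $\ell^2(L)$, where likewise $\Lambda_L(e)\delta_f=\delta_f$ if $ef=f$ and $0$ otherwise. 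Hence for $x\in\mathbb{C}L$ both $\Lambda(x)$ and $\Lambda_L(x)$ are diagonal operators, with diagonal-entry sets $\{\omega_q(x):q\in P\}$ and $\{\omega_f(x):f\in L\}$ respectively, and these coincide; therefore $\|\Lambda(x)\|=\|\Lambda_L(x)\|$ for every $x\in\mathbb{C}L$.

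Since $\mathbb{C}L$ is a $\ast$-subalgebra of $\mathbb{C}P$ (because $e^\ast=e$ for $e\in L$), this norm equality shows that $\Lambda_L(x)\mapsto\Lambda(x)$ is a well-defined isometric $\ast$-homomorphism from the dense subalgebra $\Lambda_L(\mathbb{C}L)$ of $C^\ast_r(L)$ onto the dense subalgebra $\Lambda(\mathbb{C}L)$ of $D$, hence extends to a $\ast$-isomorphism $C^\ast_r(L)\xrightarrow{\simeq}D$ sending $\Lambda_L(e)$ to $\Lambda(e)$; this is the asserted canonical isomorphism. As $L$ is a commutative inverse semigroup, the preceding proposition gives that $\Lambda_L\colon C^\ast(L)\to C^\ast_r(L)$ is an isomorphism, and composing yields the canonical isomorphism $C^\ast(L)\simeq D$ sending $e\in L$ to $\Lambda(e)$.

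For the variant with the subalgebra $D_0\subseteq C^\ast_{r,0}(P)$ generated by $\Lambda_0(L)$ (the images of the $\Lambda(e)$), note that in that setting $P$ has a zero $0$, which is idempotent, so $0\in L$ and $0_L=0$. Since $\Lambda(0)\in\Lambda(L)\subseteq D$, the one-dimensional ideal $\Lambda(\mathbb{C}0_P)=\mathbb{C}\Lambda(0)$ lies inside $D$, so restricting the quotient map $C^\ast_r(P)\to C^\ast_{r,0}(P)$ to $D$ gives a surjection onto $D_0$ with kernel exactly $\mathbb{C}\Lambda(0)$; thus $D_0\simeq D/\mathbb{C}\Lambda(0)$. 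Transporting along the isomorphism $D\simeq C^\ast_r(L)$ above (which sends $\Lambda(0)$ to $\Lambda_L(0_L)$) identifies $D_0$ with $C^\ast_r(L)/\Lambda_L(\mathbb{C}0_L)=C^\ast_{r,0}(L)$, and weak containment for $L$ (which descends to these quotients by $0$) identifies the latter in turn with $C^\ast_0(L)$.

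I do not expect a genuine obstacle here; the only steps calling for a little care are the identity $\omega_q=\omega_{qq^\ast}$ — this is what forces $\Lambda|_L$ and $\Lambda_L$ to have the same spectral content even though they act on different Hilbert spaces — and, in the $C^\ast_{r,0}$ version, keeping straight that this case presupposes a zero in $P$, hence in $L$, so that $\Lambda(0)$ is available inside $D$.
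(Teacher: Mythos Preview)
Your proof is correct and a bit more explicit than the paper's. Both arguments exploit that $\Lambda(e)$ acts diagonally on $\ell^2(P)$ for $e\in L$, and both invoke weak containment for $L$ to identify $C^\ast_r(L)$ with $C^\ast(L)$. The difference is in how the norm equality $\|\Lambda(x)\|=\|\Lambda_L(x)\|$ on $\mathbb{C}L$ is obtained. The paper gets $\|\Lambda_L(x)\|\geq\|\Lambda(x)\|$ from universality of $C^\ast(L)\simeq C^\ast_r(L)$ and the reverse inequality by simply restricting the supremum over $\delta_p$, $p\in P$, to $p\in L$. You instead prove the identity $\omega_q=\omega_{qq^\ast}$, which shows the two diagonal-entry sets coincide and gives equality in one stroke, without needing universality for the norm comparison. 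Your route is slightly more hands-on but has the virtue of establishing $D\simeq C^\ast_r(L)$ independently of weak containment; the paper's route is shorter but uses weak containment twice (once implicitly for the inequality, once for the final identification). Your treatment of the $C^\ast_{r,0}$ variant is also correct and more detailed than the paper, which leaves that case to the reader.
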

\begin{proof}
Since $L$ is commutative it has weak containment, so $C^\ast_r(L)\simeq C^\ast(L)$ is universal for representations of $L$. Thus the norm $\mathbb{C}L$ gets from its representation on $\ell^2(L)$ is greater than or equal to the one it gets from $\ell^2(P)$. But we also have that for $a\in \mathbb{C}L$
\begin{align*}
\norm{\Lambda(a)}_{C^\ast_r(P)}&=\sup\{\norm{\Lambda(a)\delta_p}:p\in P\}\\
&\geq \sup\{\norm{\Lambda(a)\delta_p}:p\in L\}\\
&=\norm{\Lambda(a)}_{C^\ast_r(L)}
\end{align*}
\end{proof}

Let $L$ be a semilattice with $0$ and let $S$ be a set such that there is an injective $0$-homomorphism $f:L\to 2^S$. Here $2^S=\{X:X\subset S\}$ is given the structure of a semilattice by saying that the semigroup product is given by set intersection. This gives a representation $\mu:L\to\ell^\infty(S)$ by $\mu(a)=\chi_{f(a)}$ where $\chi_X$ is the characteristic function of $X\subset S$. Let $C^\ast(L;f)$ be the $C^\ast$-algebra generated by the image of $\mu$. By the universality of $C^\ast_0(L)$ for $0$-representations of $L$ by commuting projections, there is a $\ast$-homomorphism $\pi:C^\ast_0(L)\to C^\ast(L;f)$ such that $\pi(a)=\mu(a)$ for each $a\in L$. We will say that $f$ is a \emph{maximal} representation of $L$ if this $\pi$ is an isomorphism. The fact that $\pi$ isn't always an isomorphism is related to the problem of finding the right way to map semilattices into Boolean algebras which is discussed by Exel in \cite{exel08,exel09}.

Before we investigate this we want to discuss filters, which is a concept that is important in the representation theory of semilattices in the same way that characters are important in the representation theory of abelian $C^\ast$-algebras.

\begin{definition}
Let $L$ be a semilattice with $0$. A \emph{filter} on $L$ is a $0$-homomorphism $\phi:L\to\{0,1\}$. Here $\{0,1\}$ is given the structure of a semilattice with $1\cdot 0=0$. An alternative view of filters on $L$ is to define them to be subsets $\phi\subset L$ such that for all $a,b\in L$,
\begin{enumerate}
\item If $a\in \phi$, then $a\preceq b$ implies $b\in\phi$.
\item If $a,b\in\phi$, then $ab\in\phi$.
\item $0\notin\phi$ and $1\in\phi$.
\end{enumerate}
Through the correspondence $a\in\phi\Leftrightarrow\phi(a)=1$, one sees that these are equivalent definitions. The latter picture is the more traditional one.
\end{definition}

Now if $\psi$ is a character on $C^\ast(L;f)$ it defines a filter $\phi$ on $L$ by saying that $\phi(a)=\psi(\mu(a))$ for each $a\in L$. Since $\mu(a)$ is always an idempotent, we have $\psi(\mu(a))\in\{0,1\}$, so this $\phi$ well defined. Moreover, since $\mu(L)$ generates $C^\ast(L;f)$, two characters on $C^\ast(L;f)$ are equal if and only if their associated filters are equal. So the characters on $C^\ast(L;f)$ are completely determined by their associated filters. In general, not every filter on $L$ will extend to a character on $C^\ast(L;f)$.

\begin{proposition}\label{prop:ucondition}
Let the setup be as above. The following conditions are equivalent
\begin{enumerate}
\item $f:L\to 2^S$ is a maximal representation, i.e. $\pi:C^\ast_0(L)\to C^\ast(L;f)$ is an isomorphism.
\item For every filter $\phi:L\to\{0,1\}$, there is a character $\psi$ on $C^\ast(L;f)$ such that $\psi(\mu(a))=\phi(a)$ for each $a\in L$.
\item For all $a_1,\ldots, a_n\in L$, if there is some $b\in L$ such that $\bigcup_{i=1}^n f(a_i)=f(b)$, then $a_i=b$ for at least one $1\leq i\leq n$.
\end{enumerate}
\end{proposition}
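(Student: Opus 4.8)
The plan is to push the whole statement through Gelfand duality, since $C^\ast_0(L)$ and $C^\ast(L;f)$ are both commutative unital $C^\ast$-algebras: $C^\ast(L;f)$ is generated by the commuting projections $\mu(a)=\chi_{f(a)}$ with $\mu(1_L)=\chi_S=1$, and $C^\ast_0(L)$ is a unital quotient of $C^\ast(L)$. First I would identify the spectrum $\widehat{C^\ast_0(L)}$ with the set $X$ of all filters on $L$, realised as a closed --- hence compact Hausdorff --- subset of $\{0,1\}^L$ with the product topology. A character $\omega$ of $C^\ast_0(L)$ sends each $a\in L$ (viewed in $C^\ast_0(L)$) to $\omega(a)\in\{0,1\}$, and $a\mapsto\omega(a)$ is a filter: it is multiplicative and, since $ba=a$ characterises $a\preceq b$ in a semilattice, upward closed. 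Conversely a filter is a $0$-representation of $L$ by projections in $\mathbb{C}$, so it extends to a character by the universal property of $C^\ast_0(L)$; a routine check shows the weak-$\ast$ topology matches the product topology.

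Next I would use that $\pi$ is surjective, so its transpose $\widehat\pi$ is a homeomorphism of $\widehat{C^\ast(L;f)}$ onto a closed subset of $X$, and under the identification above $\widehat\pi(\psi)$ is the filter $a\mapsto\psi(\mu(a))$. Thus (i), that $\pi$ is an isomorphism, is equivalent to $\widehat\pi$ being onto $X$, which is exactly (ii). To describe the image concretely, note that each $s\in S$ gives an evaluation character $\mathrm{ev}_s$ on $C^\ast(L;f)\subset\ell^\infty(S)$, with $\widehat\pi(\mathrm{ev}_s)$ the filter $\phi_s:=\{a\in L:s\in f(a)\}$; since any element of $C^\ast(L;f)$ vanishing at every point of $S$ is zero, $\{\mathrm{ev}_s:s\in S\}$ is weak-$\ast$ dense in $\widehat{C^\ast(L;f)}$, and therefore the image of $\widehat\pi$ equals the closure of $\{\phi_s:s\in S\}$ in $X$. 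Consequently (i) and (ii) are each equivalent to the density of $\{\phi_s:s\in S\}$ in the space $X$ of all filters.

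It remains to match this density with (iii). Suppose first (iii) fails: there are $a_1,\dots,a_n,b\in L$ with $\bigcup_i f(a_i)=f(b)$ but $a_i\neq b$ for all $i$. Each inclusion $f(a_i)\subseteq f(b)$ forces $a_ib=a_i$ by injectivity of $f$, so $a_i\prec b$ (and $b\neq 0$, else every $f(a_i)=\varnothing$ and $a_i=b$), and the principal filter $\phi=\{c\in L:b\preceq c\}$ contains $b$ but none of the $a_i$; since $\chi_{f(b)}=\chi_{\bigcup_i f(a_i)}\le\sum_i\chi_{f(a_i)}$ pointwise on $S$, a character $\psi$ of $C^\ast(L;f)$ extending $\phi$ would give $1=\psi(\mu(b))\le\sum_i\psi(\mu(a_i))=0$, so $\phi$ extends to no character and (ii) fails. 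Conversely, assume (iii) and let $\phi\in X$; a basic neighbourhood of $\phi$ is cut out by finitely many conditions $\phi(c_i)=1$ and $\phi(d_j)=0$, and with $c:=c_1\cdots c_k$ (so $\phi(c)=1$, $c\preceq c_i$, and $f(c)\neq\varnothing$) it suffices to find $s\in f(c)\setminus\bigcup_j f(d_j)$, since then $\phi_s$ lies in the neighbourhood. Were there no such $s$ we would have $f(c)\subseteq\bigcup_j f(d_j)$, hence $f(c)=\bigcup_j f(cd_j)$, and (iii) applied to $cd_1,\dots,cd_m$ and $c$ would force $cd_j=c$, i.e.\ $c\preceq d_j$, for some $j$; upward closure of $\phi$ then gives $\phi(d_j)=1$, a contradiction. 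Hence $\{\phi_s:s\in S\}$ is dense in $X$, which yields (ii). I expect this last, combinatorial, step to be short; the part needing the most care is the Gelfand-duality bookkeeping of the first two paragraphs --- matching the weak-$\ast$ topology on $\widehat{C^\ast_0(L)}$ with the product topology on the filter space, and pinning down the image of $\widehat\pi$ as the closure of $\{\phi_s:s\in S\}$.
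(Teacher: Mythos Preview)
Your argument is correct and self-contained, but it differs from the paper's in structure and in how much it proves. The paper argues (i)$\Rightarrow$(ii)$\Rightarrow$(iii) directly and then \emph{defers} (iii)$\Rightarrow$(i) to Proposition~2.24 of Li's paper \cite{li11}. You instead set up Gelfand duality once, identify $\widehat{C^\ast_0(L)}$ with the filter space $X$ and the image of $\widehat\pi$ with the closure of the point-filters $\{\phi_s:s\in S\}$, and then reduce everything to the single combinatorial question of whether $\{\phi_s\}$ is dense in $X$. Your (ii)$\Rightarrow$(iii) step is essentially the paper's principal-filter argument, but your (iii)$\Rightarrow$(ii) density argument is new relative to the paper and replaces the citation to Li with a short direct proof. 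The trade-off is that the paper's (i)$\Rightarrow$(ii) is a one-line appeal to the universal property, with no need for the topological bookkeeping you flag as the most delicate part; on the other hand, your approach yields, for free, a concrete description of the spectrum of $C^\ast(L;f)$ as the closure of $\{\phi_s\}$ in $X$, and it makes the whole proposition independent of \cite{li11}.
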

\begin{proof}
(i)$\Rightarrow$(ii): Let $\phi:L\to\{0,1\}$ be a filter. If $C^\ast(L;f)$ is isomorphic to $C^\ast_0(L)$, then by the universal properties of this $C^\ast$-algebra there is a nonzero $\ast$-homomorphism $C^\ast(L;f)\to C^\ast_0(\{0,1\})\simeq\mathbb{C}$ extending $\phi$. A nonzero $\ast$-homomorphism to $\mathbb{C}$ is exactly the definition of a character.

(ii)$\Rightarrow$(iii): Let $a_1,\ldots, a_n,b\in L$ be such that $\bigcup_{i=1}^n f(a_i)=f(b)$. Note that $\mu(b)\leq\sum_{i=1}^n\mu(a_i)$. Let $\phi=\{c\in L:b\preceq c\}$. This is a filter on $L$. Let $\psi$ be the extending character. Then
\[
1=\psi(\mu(b))\leq \sum_{i=1}^n \psi(\mu(a_i))
\]
Then $\psi(\mu(a_i))=\phi(a_i)=1$ for at least one $i$ . We have $f(a_ib)=f(a_i)\cap f(b)=f(a_i)$, so $a_ib=a_i$, that is $a_i\preceq b$. If $\phi(a_i)=1$, then $b\preceq a_i$ by the definition of $\phi$, so $a_i=b$.

(iii)$\Rightarrow$(i): This can be proved just like (i)$\Rightarrow$(ii) in Proposition 2.24 of \cite{li11}, so we skip the proof.
\end{proof}

\begin{corollary}\label{cor:ucondition}
Let $S$ be a left cancellative semigroup. Then $J(S)$ is independent (Definition \ref{def:ucondition}) if and only if the inclusion $\iota:J(S)\cup\{\varnothing\}\to 2^S$ is a maximal representation.
\end{corollary}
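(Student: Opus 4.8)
The plan is to deduce the corollary directly from Proposition \ref{prop:ucondition}, applied with $L = J(S)\cup\{\varnothing\}$ and with the inclusion $\iota$ playing the role of the map $f$. First I would check the hypotheses: under intersection $J(S)\cup\{\varnothing\}$ is a semilattice with zero $\varnothing$ and identity $S\in J(S)$, and $\iota:J(S)\cup\{\varnothing\}\to 2^S$ is an injective $0$-homomorphism, since it preserves intersections and sends $\varnothing$ to $\varnothing$. Thus the situation preceding Proposition \ref{prop:ucondition} is in force, and $C^\ast(L;\iota)$ is precisely the algebra whose maximality is in question.

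Next I would apply the equivalence (i)$\Leftrightarrow$(iii) of Proposition \ref{prop:ucondition}. Because $\iota$ is the inclusion, $\iota(X)=X$ for all $X$, so condition (iii) becomes: whenever $X_1,\dots,X_n\in J(S)\cup\{\varnothing\}$ satisfy $\bigcup_{i=1}^n X_i = Y$ for some $Y\in J(S)\cup\{\varnothing\}$, then $X_i=Y$ for some $i$. It then only remains to see that this coincides with Definition \ref{def:ucondition}, where $X_i$ and $Y$ range over $J(S)$ alone. One direction is immediate. For the other, assume $J(S)$ is independent and $\bigcup_{i=1}^n X_i = Y$ with all sets in $J(S)\cup\{\varnothing\}$: if $Y=\varnothing$ then each $X_i=\varnothing=Y$; if $Y\neq\varnothing$, discard the $X_i$ equal to $\varnothing$ (this changes neither the union nor the fact that at least one remaining term is nonempty) and apply independence of $J(S)$ to the surviving sets to conclude $X_i=Y$ for one of them.

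I do not expect a genuine obstacle here; the corollary is essentially a repackaging of Proposition \ref{prop:ucondition}, and the only point needing any care — reconciling the index set $J(S)$ of Definition \ref{def:ucondition} with the set $J(S)\cup\{\varnothing\}$ appearing in condition (iii) — is handled by the elementary case distinction above.
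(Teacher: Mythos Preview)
Your proposal is correct and follows exactly the paper's approach: the paper's proof is the single line ``This follows from (iii) above and the definition of the independence of $J(S)$,'' which is precisely what you spell out. Your additional care in reconciling $J(S)$ with $J(S)\cup\{\varnothing\}$ via the case distinction on $Y=\varnothing$ is a detail the paper leaves implicit, but the argument is the same.
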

\begin{proof}
This follows from (iii) above and the definition of the independence of $J(S)$.
\end{proof}

We will need the following proposition later.

\begin{proposition}\label{prop:inversesemiconditional}
Let $P$ be an $E^\ast$-unitary inverse semigroup, and let $L$ be its subsemilattice of idempotents. There exists a faithful conditional expectation $E_{r,0}:C^\ast_{r,0}(P)\to C^\ast_0(L)$ such that $E_{r,0}(\Lambda_0(p))=p$ if $p\in L$ and $E_{r,0}(\Lambda_0(p))=0$ otherwise.
\end{proposition}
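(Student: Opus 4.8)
The plan is to produce the conditional expectation on the level of the reduced $C^\ast$-algebra, using the fact that the defining representation $\Lambda$ of $C^\ast_r(P)$ lives on $\ell^2(P)$, and then check that it descends to the quotient by $\Lambda(\mathbb{C}0_P)$. First I would recall the structure of $\Lambda$: for $p\in P$ and $q\in P$, $\Lambda(p)\delta_q = \delta_{pq}$ when $p^\ast p q = q$ and $0$ otherwise. Restricting to the "diagonal" directions, observe that for an idempotent $e\in L$, $\iprod{\Lambda(e)\delta_q, \delta_q}$ equals $1$ if $eq=q$ and $0$ otherwise, while for $p\notin L$ one expects $\iprod{\Lambda(p)\delta_q,\delta_q} = 0$ for every $q$; this last point is exactly where $E^\ast$-unitarity enters, since $\Lambda(p)\delta_q = \delta_q$ with $\delta_q \ne 0$ forces $pq = q$ with $q$ (or rather $p^\ast p q = q \ne 0$), hence $p\in L$. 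So the diagonal compression map $a\mapsto \sum_q \iprod{a\delta_q,\delta_q}\, e_{qq}$ (the canonical faithful conditional expectation $\mathbb{E}: B(\ell^2(P)) \to \ell^\infty(P)$ onto the diagonal) sends $\Lambda(p)$ to the diagonal operator supported on $\{q : pq = q\}$ when $p\in L$ and to $0$ when $p\notin L$.

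Next I would show that $\mathbb{E}$ restricts to a faithful conditional expectation $C^\ast_r(P) \to D$, where $D$ is the $C^\ast$-subalgebra generated by $\Lambda(L)$: faithfulness is inherited from faithfulness of the diagonal expectation on $B(\ell^2(P))$, and the fact that $\mathbb{E}(C^\ast_r(P)) \subseteq D$ follows by a density/continuity argument from the computation on the spanning set $\{\Lambda(p) : p\in P\}$ together with the identity $\mathbb{E}(\Lambda(e)) = \Lambda(e)$ for $e\in L$ (each $\Lambda(e)$ is already diagonal). By Corollary \ref{cor:maxembeddednorm}, $D$ is canonically isomorphic to $C^\ast_r(L) \simeq C^\ast(L)$; since $L$ is commutative, Corollary \ref{cor:maxembeddednorm} also lets me identify $D$ with $C^\ast_0(L)$ after quotienting out the summand generated by $0_P$ (if $P$ has a zero). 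Composing, I get a faithful conditional expectation $E_r : C^\ast_r(P) \to C^\ast_0(L)$ (or $C^\ast(L)$ if $P$ has no zero, which by the remark in the text equals $C^\ast_0(L^0)$) with $E_r(\Lambda(p)) = p$ for $p\in L$ and $E_r(\Lambda(p)) = 0$ otherwise.

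Finally I would push this down to $C^\ast_{r,0}(P) = C^\ast_r(P)/\Lambda(\mathbb{C}0_P)$. The point is that $\Lambda(\mathbb{C}0_P)$ is the one-dimensional ideal spanned by $\Lambda(0_P)$, the projection onto $\mathbb{C}\delta_{0_P}$, and $E_r(\Lambda(0_P)) = \Lambda(0_P)$; so $E_r$ does not kill this ideal outright, but the quotient map $C^\ast_0(L) \to C^\ast_0(L)$ is already built to identify $\Lambda(0_P)$ with the class of $0$ in $C^\ast_0(L)$ — more precisely, $\mathbb{C}0_P$ sits inside $\mathbb{C}L$ as the ideal we quotient by when passing from $C^\ast_r(L)$-picture of $D$ to $C^\ast_0(L)$. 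So $E_r$ maps the ideal $\Lambda(\mathbb{C}0_P)$ into the corresponding ideal and therefore induces a map $E_{r,0} : C^\ast_{r,0}(P) \to C^\ast_0(L)$; it is a conditional expectation because $E_r$ is. Faithfulness of $E_{r,0}$ is the one genuinely delicate point: a positive element $\bar a \in C^\ast_{r,0}(P)$ with $E_{r,0}(\bar a) = 0$ lifts to a positive $a \in C^\ast_r(P)$ with $E_r(a) \in \Lambda(\mathbb{C}0_P)$, and by faithfulness of $E_r$ one must argue that $a$ then lies in $\Lambda(\mathbb{C}0_P)$ too — this follows because $\Lambda(0_P)$ is a central projection cutting out a direct summand $\mathbb{C}$ of $C^\ast_r(P)$ (it commutes with every $\Lambda(p)$, since $0_P \cdot p = p\cdot 0_P = 0_P$), so $C^\ast_r(P) \cong \Lambda(\mathbb{C}0_P) \oplus C^\ast_{r,0}(P)$ and $E_r$ respects this decomposition. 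The main obstacle is precisely keeping track of the zero-summand bookkeeping so that faithfulness survives the quotient; once the direct-sum splitting via the central projection $\Lambda(0_P)$ is in hand, the rest is routine.
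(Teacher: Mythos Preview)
Your approach is essentially the same as the paper's: both use the diagonal conditional expectation $B(\ell^2(P))\to\ell^\infty(P)$, compute its effect on $\Lambda(p)$ using $E^\ast$-unitarity, invoke Corollary~\ref{cor:maxembeddednorm} to identify the image with $C^\ast(L)$, and then pass to the quotient by $\Lambda(\mathbb{C}0_P)$. One small correction: at the pre-quotient level, for $p\notin L$ you do \emph{not} get $\mathbb{E}(\Lambda(p))=0$ when $P$ has a zero, because $\Lambda(p)\delta_{0_P}=\delta_{p\cdot 0_P}=\delta_{0_P}$ for every $p$; the correct value is $\mathbb{E}(\Lambda(p))=\Lambda(0_P)$, which then becomes $0$ in $C^\ast_0(L)$ --- so the slip is harmless. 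For faithfulness, your argument via the central projection $\Lambda(0_P)$ and the resulting direct-sum splitting $C^\ast_r(P)\cong \mathbb{C}\Lambda(0_P)\oplus C^\ast_{r,0}(P)$ is a clean structural alternative to the paper's direct computation (which shows that $E_r(a^\ast a)\in\mathbb{C}\Lambda(0_P)$ forces $\|a\delta_q\|=0$ for $q\neq 0_P$, hence $a^\ast a\in\mathbb{C}\Lambda(0_P)$); both arguments are equivalent and equally short.
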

\begin{proof}
Let $E_r:B(\ell^2(P))\to\ell^\infty(P)$ be the usual faithful conditional expectation given by
\[
\iprod{E_r(a)\delta_q,\delta_q}=\iprod{a\delta_q,\delta_q}
\]
Here $\ell^\infty(P)$ is viewed as a subalgebra of $B(\ell^2(P))$ represented by pointwise multiplication. First, if $p\in L$, then $\iprod{\Lambda(p)\delta_q,\delta_r}\neq 0$ if and only if $p^\ast pq=pq=q$ and $pq=r$ which implies $r=q$. So $\Lambda(p)\in\ell^\infty(P)$, and $E_r(\Lambda(p))=\Lambda(p)$.

In general, let $p\in P$ and suppose $\iprod{\Lambda(p)\delta_q,\delta_q}\neq 0$ for some $q\in P\setminus\{0\}$. Then $pq=q$, so since $P$ is $E^\ast$-unitary, $p\in L$.

Due to Corollary \ref{cor:maxembeddednorm}, we now identify $C^\ast(L)$ with the closure of $\mathbb{C}L$ inside $C^\ast_r(P)$. We have $E_r:C^\ast_r(P)\to C^\ast(L)$, and since $E_r(\Lambda(0))=\Lambda(0)$, $E_{r,0}:C^\ast_{r,0}(P)\to C^\ast_0(L)$ can be defined with the desired properties.

For any $a\in C^\ast_r(P)$, let $[a]$ denote its image in $C^\ast_{r,0}(P)$. We have that $E_{r,0}([a^\ast a])=0$ if and only if $E_r(a^\ast a)=\alpha \Lambda(0_P)$ for some $\alpha\in\mathbb{C}$ if and only if $\iprod{a^\ast a\delta_{0_P},\delta_{0_P}}=\norm{a\delta_{0_P}}^2=\alpha$ and $\norm{a\delta_q}^2=0$ for all $q\in P\setminus\{0_P\}$. This implies that $a^\ast a=\alpha\Lambda(0_P)$ and that $[a^\ast a]=0$, so $E_{r,0}$ is faithful.
\end{proof}

On the other hand we have the following lemma, which is also interesting.

\begin{lemma}\label{lem:conditionalexpectationestar}
Let $A$ be a $C^\ast$-algebra generated by an inverse semigroup $P$ of partial isometries. Let $L$ be the semilattice of idempotents in $P$, and let $D$ be the subalgebra of $A$ generated by $L$. If there exists a conditional expectation $E:A\to D$ such that $E(p)=p$ if $p\in L$ and $E(p)=0$ otherwise, then $P$ is $E^\ast$-unitary.
\end{lemma}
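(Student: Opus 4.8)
The plan is to prove directly that $P$ is $E^\ast$-unitary by unwinding the definition. So fix $p,q\in P$ with $pq=q$ and $q\neq 0$; the goal is to show $p\in L$. First I would carry out the standard reduction to an idempotent: set $e=qq^\ast$, which is an idempotent of $P$, hence an element of $L$, and multiply $pq=q$ on the right by $q^\ast$ to obtain $pe=e$. Note that $e\neq 0$, since $e=0$ would give $q=qq^\ast q=0$; this is the only place where the hypothesis $q\neq 0$ enters.

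Now suppose, for contradiction, that $p\notin L$, so that $E(p)=0$ by hypothesis, while $E(e)=e$ because $e\in L$. The key step is to apply $E$ to the element $(p-e)e$ of $A$. On the one hand $(p-e)e=pe-e^2=e-e=0$, so $E((p-e)e)=0$. On the other hand, since $e\in L\subset D$ and $E$ is a $D$-bimodule map, $E((p-e)e)=(E(p)-E(e))e=(E(p)-e)e=E(p)e-e$. Comparing the two computations gives $E(p)e=e$, and since $E(p)=0$ this forces $e=0$, contradicting $e\neq 0$. Hence $p\in L$, and as $p,q$ were arbitrary, $P$ is $E^\ast$-unitary.

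The argument is essentially formal once one identifies the right element to feed into $E$: it is the interplay between the semigroup relation $pe=e$ and the $D$-bimodule property of the conditional expectation that produces the contradiction. I expect that identification — together with the observation that $q\neq 0$ is needed precisely to keep $e=qq^\ast$ nonzero — to be the only point requiring any thought; the remaining steps are the routine reduction to idempotents (already noted in the discussion after Definition~\ref{def:eastunitary}) and a one-line computation.
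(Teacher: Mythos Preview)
Your proof is correct and follows essentially the same approach as the paper: both arguments reduce to an idempotent $e\in L\setminus\{0\}$ with $pe=e$ and then exploit the $D$-bimodule property of $E$ to obtain $E(p)e=e$, forcing $E(p)\neq 0$ and hence $p\in L$. The paper's version is slightly more compressed (it applies $E$ directly to $pq=q$ rather than to $(p-e)e$, and argues directly rather than by contradiction), but the substance is identical.
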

\begin{proof}
Suppose $p\in P$ and $q\in L\setminus\{0\}$ satisfy $pq=q$. Then $pq=q=E(q)=E(pq)=E(p)q$ since $E$ is a conditional expectation. This implies that $E(p)\neq 0$, so $p\in L$. This shows that $P$ is $E^\ast$-unitary.
\end{proof}
\subsection{The left regular representation and the left inverse hull of a left cancellative semigroup}

From now on, $S$ will always be a left cancellative semigroup unless something else is stated. Let $\{\varepsilon_s\}_{s\in S}$ be the orthogonal basis of $\ell^2(S)$, where $\varepsilon_s(t)=1$ if $s=t$, and $0$ otherwise. The left regular representation of $S$ is the semigroup homomorphism $s\mapsto V_s$, where $V_s:\ell^2(S)\to\ell^2(S)$ is given by $V_s\varepsilon_t=\varepsilon_{st}$.

Now, $\iprod{V_s^\ast\varepsilon_t,\varepsilon_r}=1$ if $t=sr$ and $0$ otherwise, so
\[
V_s^\ast\varepsilon_t=\sum_{r\in t^{-1}(\{s\})}\varepsilon_r
\]
Since $S$ is left cancellative, $t^{-1}(\{s\})$ is either a singleton or empty. It follows readily that $V_s$ is an isometry for each $s\in S$.

Let $E:B(\ell^2(S))\to\ell^\infty(S)$ be the conditional expectation given by $\iprod{E(a)\varepsilon_s,\varepsilon_s}=\iprod{a\varepsilon_s,\varepsilon_s}$ for each $s\in S$. Here we view $\ell^\infty(S)$ as a subalgebra of $B(\ell^2(S))$ represented by pointwise multiplication. For a subset $X\subset S$, let $\chi_X\in\ell^\infty(S)$ be the associated characteristic function. It is easy to check that for all $s\in S$ and $X\subset S$
\begin{equation}\label{eq:covariance}
V_s\chi_X V_s^\ast=\chi_{sX},\qquad V_s^\ast\chi_X V_s=\chi_{s^{-1}(X)}
\end{equation}
In particular, $V_sV_s^\ast=\chi_{sS}$.

We will let $C^\ast_r(S)$ be the $C^\ast$-algebra generated by $\{V_s:s\in S\}$, and let $D_r(S)$ be the commutative $C^\ast$-algebra generated by $\{\chi_X:X\in J(S)\}$. Note that $C^\ast_r(S)$ is the closed linear span of the set
\[
V(S)=\{V_{t_1}^\ast V_{s_1}\cdots V_{t_n}^\ast V_{s_n}:n\in\mathbb{N},s_1,\ldots, s_n,t_1,\ldots, t_n\in S\} 
\]
$V(S)$ is itself a semigroup under composition of operators, and it is an inverse semigroup since it consists of partial isometries with commuting initial and final projections. To see this, note that if $V=V_{t_1}^\ast V_{s_1}\cdots V_{t_n}^\ast V_{s_n}$, then by repeatedly applying the relations in \eqref{eq:covariance} we get that $VV^\ast=\chi_X$ with $X=t_1^{-1}s_1\cdots t_n^{-1}s_nS$. Similarly, $V^\ast V=\chi_Y$ with $Y=s_n^{-1}t_n\cdots s_1^{-1}t_1S$. The second assertion of the next lemma is also proved in \cite{li11}.

\begin{lemma}\label{lem:simplifiedjs}
The semilattice of idempotents in $V(S)$ is isomorphic to the $\cap$-semilattice
\[
J:=\{t_1^{-1}s_1\cdots t_n^{-1}s_nS:n\in\mathbb{N}, s_i,t_i\in S\}
\]
Moreover $J=J(S)$.
\end{lemma}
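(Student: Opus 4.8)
The plan is to prove the two claims of Lemma \ref{lem:simplifiedjs} by establishing, in order: (a) that the idempotents of $V(S)$ are exactly the projections $\chi_X$ with $X\in J$; (b) that the assignment $\chi_X\mapsto X$ is a semilattice isomorphism onto $(J,\cap)$; and (c) that $J=J(S)$. For (a) and (b), the computation already sketched right before the lemma does most of the work: for $V=V_{t_1}^\ast V_{s_1}\cdots V_{t_n}^\ast V_{s_n}$ one has $VV^\ast=\chi_X$ with $X=t_1^{-1}s_1\cdots t_n^{-1}s_nS$, using \eqref{eq:covariance} repeatedly, and every idempotent of an inverse semigroup of partial isometries is of the form $VV^\ast$. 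Conversely each $X\in J$ arises this way by definition, so the set of idempotents of $V(S)$ is precisely $\{\chi_X:X\in J\}$. That $X\mapsto\chi_X$ intertwines products is the elementary identity $\chi_X\chi_Y=\chi_{X\cap Y}$, and it is injective because $\chi_X=\chi_Y$ forces $X=Y$ as subsets of $S$; so $J$ is closed under intersection and the idempotent semilattice of $V(S)$ is isomorphic to $(J,\cap)$. I would also note that $1=1_SS=S\in J$, so $J$ contains the top element, and record that $t_1^{-1}s_1\cdots t_n^{-1}s_nS$ really is a right ideal (a finite alternating application of the two operations $X\mapsto sX$, $X\mapsto s^{-1}X$, each of which preserves right ideals, as observed after Definition of right ideal).

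For (c), the inclusion $J\subseteq J(S)$ is immediate from the definition of $J(S)$ (take $N=1$). The content is the reverse inclusion: a set of the form $\bigcap_{j=1}^N t_{j1}^{-1}s_{j1}\cdots t_{jn_j}^{-1}s_{jn_j}S$ can be rewritten as a single alternating word $t_1^{-1}s_1\cdots t_n^{-1}s_nS$. The key algebraic fact is that the family $\{t_1^{-1}s_1\cdots t_n^{-1}s_nS\}$ is already closed under finite intersection — which is exactly what (a)+(b) give us, since intersection is the semilattice product on the idempotents of $V(S)$ and $V(S)$ is closed under products. Concretely, given two words $V=V_{t_1}^\ast V_{s_1}\cdots V_{t_n}^\ast V_{s_n}$ and $W=V_{u_1}^\ast V_{v_1}\cdots V_{u_m}^\ast V_{v_m}$ with $VV^\ast=\chi_X$, $WW^\ast=\chi_Y$, the product $\chi_X\chi_Y=\chi_{X\cap Y}$ equals $(VV^\ast)(WW^\ast)$, and since $V(S)$ is closed under composition and adjoints this is again $ZZ^\ast$ for some word $Z$ in the generators $V_s,V_s^\ast$; applying \eqref{eq:covariance} to $Z$ exhibits $X\cap Y$ as a single alternating expression $t_1'^{-1}s_1'\cdots t_k'^{-1}s_k'S$. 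Iterating this $N-1$ times collapses an arbitrary element of $J(S)$ into $J$, giving $J(S)\subseteq J$ and hence equality.

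The main obstacle — really the only point that needs care — is making the "closed under intersection" argument airtight: one must check that when $\chi_X\chi_Y$ is written back as $ZZ^\ast$ and \eqref{eq:covariance} is applied to pull the projection out of the word $Z$, the resulting set is genuinely of the form $t_1^{-1}s_1\cdots t_n^{-1}s_nS$ with the indices $n, s_i, t_i$ finite and in $S$ (in particular nonempty words, i.e. $n\in\mathbb{N}$), and that an empty intersection does not secretly appear — but since $S\in J$ and all our words start from $S$, the $\cap$-semilattice $J$ has a top element and contains $X\cap Y$ whenever this is nonempty; when $X\cap Y=\varnothing$ it simply lies outside $J$, consistently with $J(S)$ being defined without $\varnothing$, and the induction in (c) never needs that case because each individual word is already nonempty as a set (it contains no elements only in degenerate situations that are handled by passing to $J(S)\cup\{\varnothing\}$ as in Section \ref{sec:semilattice}). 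I would phrase (c) as a clean induction on $N$, with the inductive step being exactly the binary closure-under-$\cap$ statement, so that the bookkeeping of indices is isolated in one lemma-sized computation.
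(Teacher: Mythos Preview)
Your proposal is correct and follows essentially the same route as the paper: identify the idempotents of $V(S)$ as $\{\chi_X:X\in J\}$ via the computation preceding the lemma, use $\chi_X\chi_Y=\chi_{X\cap Y}$ to get both the semilattice isomorphism and closure of $J$ under $\cap$, and conclude $J=J(S)$ from that closure. The paper's proof is just a terser version of exactly this.

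One remark: your aside about the empty set is muddled and should be dropped. It is not true that ``when $X\cap Y=\varnothing$ it simply lies outside $J$''; in fact $\varnothing\in J$ precisely when $S$ fails to be left reversible (cf.\ the lemma at the start of Section~\ref{sec:leftreversible}). The closure-under-$\cap$ argument already covers this case without any special handling: $\chi_X\chi_Y$ is an idempotent in $V(S)$, hence equals some $ZZ^\ast=\chi_Z$ with $Z\in J$, and $\chi_{X\cap Y}=\chi_Z$ forces $X\cap Y=Z\in J$ regardless of whether this set is empty. So the induction on $N$ goes through uniformly and the empty case needs no separate discussion.
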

\begin{proof}
We saw in the previous paragraph that the semilattice of idempotents in $V(S)$ is $\{\chi_X:X\in J\}$. For any $X,Y\in J$, $\chi_X\chi_Y=\chi_{X\cap Y}$, so $J$ has to be a $\cap$-semilattice and be isomorphic to $\{\chi_X:X\in J\}$. Since $J$ is therefore closed under $\cap$ it must be equal to $J(S)$ as defined in subsection \ref{sec:semilattice}.
\end{proof}

The inverse semigroup $V(S)$ will play an important role in the sequel. As we noted in the introduction it can be given a purely algebraic description. Let $I(S)$ be the inverse semigroup of all bijective partially defined functions $S\to S$. Define $I_l(S)$ to be the inverse subsemigroup of $I(S)$ generated by the partial bijections $\{\lambda_s\}_{s\in S}$ where $\lambda_s:S\to sS$ is given by $\lambda_s(t)=st$.

\begin{lemma}\label{lem:isomorphicinverse}
There is a faithful representation $\omega:I_l(S)\to B(\ell^2(S))$ such that $\omega(\lambda_s)=V_s$ for each $s\in S$. So $\omega$ is an isomorphism of $I_l(S)$ onto $V(S)$.
\end{lemma}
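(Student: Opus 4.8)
The plan is to realize $\omega$ as the restriction to $I_l(S)$ of the natural representation of the full symmetric inverse semigroup $I(S)$ on $\ell^2(S)$. For $f\in I(S)$ define $\omega(f)\in B(\ell^2(S))$ by $\omega(f)\varepsilon_t=\varepsilon_{f(t)}$ when $t\in\dom(f)$ and $\omega(f)\varepsilon_t=0$ otherwise. Since $f$ carries the orthonormal set $\{\varepsilon_t:t\in\dom(f)\}$ bijectively onto $\{\varepsilon_r:r\in\ran(f)\}$ while $\omega(f)$ annihilates the orthogonal complement of the closed span of $\{\varepsilon_t:t\in\dom(f)\}$, each $\omega(f)$ is a well-defined partial isometry (the zero operator exactly when $f$ is the empty map). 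Evaluating at the generators gives $\omega(\lambda_s)\varepsilon_t=\varepsilon_{\lambda_s(t)}=\varepsilon_{st}=V_s\varepsilon_t$, so $\omega(\lambda_s)=V_s$.

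Next I would verify that $\omega$ is a $\ast$-homomorphism. For products, recall $\dom(fg)=g^{-1}(\dom f)$; then $\omega(f)\omega(g)\varepsilon_t$ is nonzero precisely when $t\in\dom g$ and $g(t)\in\dom f$, that is when $t\in\dom(fg)$, and in that case it equals $\varepsilon_{f(g(t))}=\varepsilon_{(fg)(t)}=\omega(fg)\varepsilon_t$, whence $\omega(fg)=\omega(f)\omega(g)$. For the involution, $\iprod{\omega(f)^\ast\varepsilon_t,\varepsilon_r}=\iprod{\varepsilon_t,\omega(f)\varepsilon_r}$ equals $1$ exactly when $r\in\dom f$ and $f(r)=t$, which identifies $\omega(f)^\ast$ with $\omega(f^\ast)$ since $f^\ast=f^{-1}$ has domain $\ran(f)$. (Alternatively, one only needs $\omega$ to be a semigroup homomorphism, as a homomorphism between inverse semigroups automatically intertwines the $\ast$-operations, as noted after the definition of inverse semigroup.) Because $I_l(S)$ is by definition the inverse subsemigroup of $I(S)$ generated by $\{\lambda_s\}$, its image under the $\ast$-homomorphism $\omega$ is the inverse subsemigroup of $B(\ell^2(S))$ generated by $\{V_s\}$, i.e.\ exactly $V(S)$; hence $\omega$ maps $I_l(S)$ onto $V(S)$.

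Finally, faithfulness: $\omega$ is in fact injective already on all of $I(S)$. If $\omega(f)=\omega(g)$, then for each $t\in S$ the vector $\omega(f)\varepsilon_t=\omega(g)\varepsilon_t$ is nonzero exactly on the common domain, so $\dom(f)=\dom(g)$, and then $\varepsilon_{f(t)}=\varepsilon_{g(t)}$ forces $f(t)=g(t)$ for every $t$ in that domain, so $f=g$. Restricting to $I_l(S)$, we conclude that $\omega$ is an injective $\ast$-homomorphism onto $V(S)$, hence an isomorphism $I_l(S)\xrightarrow{\simeq}V(S)$. Everything here is routine; the only point demanding a moment's attention is the bookkeeping of domains in $\omega(fg)=\omega(f)\omega(g)$, and even this is immediate from the definition of the product in $I(S)$, so I do not anticipate a real obstacle.
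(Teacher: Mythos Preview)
Your proof is correct and follows essentially the same approach as the paper: define $\omega(f)\varepsilon_t=\varepsilon_{f(t)}$ on $\dom(f)$ and $0$ otherwise, check multiplicativity via the domain bookkeeping $\dom(fg)=g^{-1}(\dom f)$, and verify injectivity by reading off $\dom(f)$ and the values $f(t)$ from $\omega(f)\varepsilon_t$. The only cosmetic differences are that you frame $\omega$ as defined on all of $I(S)$ and explicitly address the $\ast$-structure, whereas the paper works directly on $I_l(S)$ and leaves the involution implicit; neither changes the substance.
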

\begin{proof}
For any $f\in I_l(S)$, define $\omega(f):\ell^2(S)\to\ell^2(S)$ by
\[
\omega(f)\varepsilon_s=\begin{cases}\varepsilon_{f(s)}\mbox{ if }s\in\dom(f)\\0\mbox{ otherwise }\end{cases}
\]
where $\dom(f)$ is the domain of $f$. Then $\omega(f)\in B(\ell^2(S))$ since $f$ is injective, and for any $s\in S$, $\omega(\lambda_s)=V_s$. For any $f,g\in I_l(S)$ we have that $f=g$ if and only if $\dom(f)=\dom(g)$ and $f(s)=g(s)$ for all $s\in\dom(f)$. This happens if and only if $\ker\omega(f)=\ker\omega(g)$ and $\omega(f)\varepsilon_s=\omega(g)\varepsilon_s$ for all $s\in S$. This is equivalent to $\omega(f)=\omega(g)$. So $\omega$ is an injective map. Now for any $f,g\in I_l(S)$ and $s\in S$,
\[
\omega(f)\omega(g)\varepsilon_s=\begin{cases}\varepsilon_{f(g(s))}\mbox{ if }s\in g^{-1}(\dom(f))\\0\mbox{ otherwise }\end{cases}
\]
so $\omega(f\cdot g)=\omega(f)\omega(g)$. This shows that $\omega$ is a surjective homomorphism of $I_l(S)$ onto $V(S)$, and thus it is an isomorphism.
\end{proof}

$I_l(S)$ is often called the \emph{left inverse hull} of $S$ and has previously been studied in several settings. Some recent information on it can be found in \cite{meakin11, lawson97, jiang03} and \cite{lawson99}. Using $\omega$ one can translate most statements about $V(S)$ into statements about $I_l(S)$ and vice versa. Note that the semilattice of idempotents in $I_l(S)$ is $\{i_X:X\in J(S)\}\simeq J(S)$ where $i_X:X\to X$ is the identity map on $X$. We will sometimes identify $J(S)$ with $\{i_X:X\in J(S)\}$ from here on.

Many of the ideas of this subsection are present in \cite{li11}, but they are not expressed in terms of $V(S)$ as an inverse semigroup. Using a simple induction argument (or deducing it from the proof of Lemma \ref{lem:isomorphicinverse}) we know that for any $V\in V(S)$ and $s\in S$, $V\varepsilon_s$ is either $0$ or $\varepsilon_t$ for some $t\in S$.

\begin{lemma}\label{lem:eofvisv}
Let $V\in V(S)$. Then $E(V)=V$ if and only if $V$ is idempotent.
\end{lemma}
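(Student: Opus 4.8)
The plan is to prove the statement by direct computation using the concrete description of the action of $V(S)$ on the canonical basis of $\ell^2(S)$. Recall from the discussion preceding the lemma that for any $V \in V(S)$ and any $s \in S$, the vector $V\varepsilon_s$ is either $0$ or equals $\varepsilon_t$ for a unique $t \in S$. This dichotomy is the whole engine of the proof: it tells us that $V$ behaves, on basis vectors, like a partial bijection, and so $E(V)$ (which only retains the diagonal matrix entries $\iprod{V\varepsilon_s, \varepsilon_s}$) retains exactly those $s$ with $V\varepsilon_s = \varepsilon_s$.

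For the \emph{if} direction, suppose $V$ is idempotent. Then $V = VV^\ast$ (since in an inverse semigroup every idempotent $V$ satisfies $V = V^\ast = V^2$, hence $V = VV^\ast$), and we computed earlier that $VV^\ast = \chi_X$ for some $X \in J(S)$; a characteristic function lies in $\ell^\infty(S)$, which is fixed pointwise by $E$, so $E(V) = E(\chi_X) = \chi_X = V$.

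For the \emph{only if} direction, suppose $E(V) = V$. Write $V\varepsilon_s = \varepsilon_{f(s)}$ for $s \in \dom$ and $V\varepsilon_s = 0$ otherwise, where $f$ is the associated partial bijection (via Lemma \ref{lem:isomorphicinverse}, $V = \omega(f)$ for some $f \in I_l(S)$). The matrix entry $\iprod{V\varepsilon_s, \varepsilon_t}$ is nonzero iff $s \in \dom$ and $f(s) = t$. Now $E(V) = V$ means $V$ is a diagonal operator, i.e. $\iprod{V\varepsilon_s, \varepsilon_t} = 0$ whenever $s \neq t$; equivalently, for every $s \in \dom$ we must have $f(s) = s$. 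Hence $f$ is the identity map on its domain $X := \dom$, so $V = \omega(i_X)$ is the idempotent $\chi_X \in J(S)$ (identified as in the remark following Lemma \ref{lem:isomorphicinverse}), which is idempotent.

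I do not anticipate a genuine obstacle here; the statement is essentially a bookkeeping consequence of the basis-vector dichotomy already established. The one point to be careful about is to make sure the dichotomy ``$V\varepsilon_s \in \{0\} \cup \{\varepsilon_t : t \in S\}$'' is invoked correctly — it is exactly what rules out the possibility of a diagonal operator that is not a projection, since a diagonal entry $\iprod{V\varepsilon_s,\varepsilon_s}$ that is nonzero must, by the dichotomy, equal $1$. With that in hand both directions are immediate.
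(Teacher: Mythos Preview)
Your proof is correct and follows essentially the same approach as the paper: both directions hinge on the dichotomy $V\varepsilon_s\in\{0\}\cup\{\varepsilon_t:t\in S\}$, which forces the diagonal entries of $V$ to lie in $\{0,1\}$ and hence makes any $V$ with $E(V)=V$ a characteristic function. The only cosmetic difference is that for the ``only if'' direction you route through the isomorphism $\omega$ with $I_l(S)$ to identify $V$ with $\omega(i_X)$, whereas the paper argues directly that $V=E(V)=\chi_X$ with $X=\{s:\iprod{V\varepsilon_s,\varepsilon_s}=1\}$ and concludes $V=V^2$; both are the same computation in different clothing.
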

\begin{proof}
By Lemma \ref{lem:simplifiedjs}, $V$ is idempotent if and only if $V=E_X$ for some $X\in J(S)$. This implies $E(V)=V$. Let $V\in V(S)$, and suppose $E(V)=V$. For every $s\in S$, $\iprod{V\varepsilon_s,\varepsilon_s}$ is either $0$ or $1$, so $V=E(V)=\chi_X$ where $X=\{s\in S:\iprod{V\varepsilon_s,\varepsilon_s}=1\}$. Hence $V=V^2$.
\end{proof}

\begin{corollary}
Let $a\in C^\ast_r(S)$. Then $E(a)=a$ if and only if $a\in D_r(S)$.
\end{corollary}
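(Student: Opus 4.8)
The plan is to prove the two implications separately; the forward one is routine, and the real content is in the converse, which I reduce to a statement about the diagonal entries of the elements of $V(S)$.

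For the implication $a\in D_r(S)\Rightarrow E(a)=a$: since $D_r(S)$ is generated by the commuting projections $\{\chi_X:X\in J(S)\}$ and $J(S)$ is closed under intersection, $D_r(S)$ is the norm closure of the linear span of these projections. By Lemma \ref{lem:eofvisv} each $\chi_X$ with $X\in J(S)$, being an idempotent of $V(S)$, is fixed by $E$; and $E$ is linear and contractive, hence norm-continuous, so it fixes all of $D_r(S)$.

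For the converse, take $a\in C^\ast_r(S)$ with $E(a)=a$ and approximate it by a sequence $a_k=\sum_i c_{k,i}V_{k,i}\in\spn V(S)$. Applying $E$ and using continuity and linearity gives $a=E(a)=\lim_k\sum_i c_{k,i}E(V_{k,i})$. For any $V\in V(S)$ and $s\in S$ we observed (just before Lemma \ref{lem:eofvisv}) that $V\varepsilon_s$ is $0$ or $\varepsilon_t$ for some $t$, so $\iprod{V\varepsilon_s,\varepsilon_s}\in\{0,1\}$ and hence $E(V)=\chi_{X_V}$ with $X_V:=\{s\in S:V\varepsilon_s=\varepsilon_s\}$. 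Thus the whole statement comes down to the single claim that $\chi_{X_V}\in D_r(S)$ for every $V\in V(S)$ (equivalently, that $E(C^\ast_r(S))=D_r(S)$): granting this, each partial sum $\sum_i c_{k,i}E(V_{k,i})$ lies in $D_r(S)$, and so does the limit $a$.

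Proving $\chi_{X_V}\in D_r(S)$ is the main obstacle. My approach is to pass to $I_l(S)$ via Lemma \ref{lem:isomorphicinverse}: writing $V=V_{t_1}^\ast V_{s_1}\cdots V_{t_n}^\ast V_{s_n}=\omega(f)$ with $f=\lambda_{t_1}^{-1}\lambda_{s_1}\cdots\lambda_{t_n}^{-1}\lambda_{s_n}\in I_l(S)$, the set $X_V$ is precisely the fixed-point set of the partial bijection $f$, a subset of $\dom(f)\in J(S)$. When $S$ embeds into a group $G$ this is immediate and clean: such an $f$ acts on its domain as left translation by the fixed element $(t_1^{-1}s_1)\cdots(t_n^{-1}s_n)\in G$, so $X_V$ equals $\varnothing$ or $\dom(f)$, in either case a member of $J(S)\cup\{\varnothing\}$, whence $\chi_{X_V}\in\{\chi_X:X\in J(S)\}\subset D_r(S)$. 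For general left cancellative $S$ I would proceed by induction on $n$, exploiting that the domains $\dom(f^k)$ and the preimages $f^{-k}(Y)$, $Y\in J(S)$, again lie in $J(S)$ — because $i_{\dom(f^k)}$ and $(f^\ast)^k\,i_Y\,f^k$ are idempotents of $I_l(S)$ — in order to express $\chi_{X_V}$ as a norm limit of finite real-linear combinations of the projections $\chi_X$, $X\in J(S)$; note that one genuinely needs the $C^\ast$-closure of $D_r(S)$ here, not merely the set $\{\chi_X:X\in J(S)\}$. This inductive/structural step is where I expect the work to lie; once it is in place, the limit displayed above finishes the proof, and the by-product $E(C^\ast_r(S))=D_r(S)$ records that $E$ is the canonical conditional expectation onto the diagonal.
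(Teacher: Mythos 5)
Your forward implication and the overall skeleton (approximate by elements of $\spn V(S)$, apply $E$, use linearity and norm-continuity) coincide with the paper's argument, which disposes of the corollary in two sentences by citing the density of $\spn V(S)$ in $C^\ast_r(S)$ and of $\spn\{\chi_X:X\in J(S)\}$ in $D_r(S)$ together with Lemma \ref{lem:eofvisv}. Where you go further is in making explicit what that density argument actually requires for the converse: that $E(V)\in D_r(S)$ for every $V\in V(S)$, i.e.\ that the characteristic function of the fixed-point set $X_V=\{s\in S:V\varepsilon_s=\varepsilon_s\}$ of the corresponding partial bijection $f\in I_l(S)$ lies in $D_r(S)$. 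Your treatment of the group-embeddable case is correct and complete: by the grading of Proposition \ref{prop:groupstrongeastunitary}, $f$ acts on $\dom(f)$ by left translation by a fixed element of $G$, so $X_V$ is either $\varnothing$ or $\dom(f)\in J(S)$, and $E(V)$ is either $0$ or an element of $\{\chi_X:X\in J(S)\}$; this is in effect the situation of Lemma \ref{lem:reducedcondexp}.

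The gap is the general left cancellative case, which is what the corollary asserts. You state the claim $\chi_{X_V}\in D_r(S)$, announce an induction on $n$, and stop: no inductive step is given, and the ingredients you list do not obviously produce one. The facts that $\dom(f^k)\in J(S)$ and that $(f^\ast)^k i_Y f^k$ is an idempotent of $I_l(S)$ give you control over domains, ranges and preimages, but $X_V$ is the equalizer of $f$ with the identity --- as an element of $I(S)$ it is the meet $f\wedge i_S$, which need not belong to $I_l(S)$ --- and it is not the domain or preimage of anything you have named. Nor does the fixed-point set of a product $fg$ decompose in terms of the fixed-point sets of $f$ and $g$, so it is unclear what quantity the induction on $n$ is tracking. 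Until you either prove $\chi_{X_V}\in D_r(S)$ for arbitrary $V\in V(S)$, or find a route to the inclusion $\{a\in C^\ast_r(S):E(a)=a\}\subset D_r(S)$ that avoids computing $E(V)$ term by term, your converse direction is only established when $S$ embeds into a group.
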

\begin{proof}
$V(S)$ spans a dense subset of $C^\ast_r(S)$, and $\{E_X:X\in J(S)\}$ spans a dense subset of $D_r(S)$. The result follows by the linearity and continuity of $E$.
\end{proof}

\begin{lemma}\label{lem:rightactioncommutes}
Let $\rho$ be the right action of $S$ on itself given by $\rho_r(s)=sr$ for $s,r\in S$. Then for every $f\in I_l(S)$ and $s\in\dom(f)$,
\begin{equation}\label{eq:rightactioncommutes}
\rho_r(f(s))=f(\rho_r(s))
\end{equation}
for all $r\in S$.
\end{lemma}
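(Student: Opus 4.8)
The plan is to prove the statement, in a mildly strengthened form, by induction over the way elements of $I_l(S)$ are built from the generating partial bijections $\lambda_s$. Let $P$ denote the set of all $f\in I(S)$ such that $\dom(f)$ is a right ideal of $S$ and $\rho_r(f(s))=f(\rho_r(s))$ for every $s\in\dom(f)$ and $r\in S$; the first condition guarantees that $\rho_r(s)=sr\in\dom(f)$, so the displayed identity is a genuine equation in $S$. Since $I_l(S)$ is by definition the inverse subsemigroup of $I(S)$ generated by $\{\lambda_s\}_{s\in S}$, it suffices to show that each $\lambda_s$ lies in $P$ and that $P$ is closed under composition and under the inversion $f\mapsto f^\ast=f^{-1}$.

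The base case is immediate: $\dom(\lambda_s)=S$ is a right ideal, and $\rho_r(\lambda_s(t))=(st)r=s(tr)=\lambda_s(\rho_r(t))$ by associativity. For a composition $fg$ with $f,g\in P$, one uses $\dom(fg)=g^{-1}(\dom(f))$ together with the commutation property of $g$ to see that $\dom(fg)$ is again a right ideal, and then derives the commutation identity for $fg$ by applying in turn the identity for $g$ and then for $f$. For the inverse of $f\in P$, one first checks that $\ran(f)=\dom(f^{-1})$ is a right ideal --- if $t=f(s)$ with $s\in\dom(f)$ then $tr=\rho_r(f(s))=f(sr)\in\ran(f)$ --- and then, writing $s=f^{-1}(t)$, reads off $f^{-1}(\rho_r(t))=f^{-1}(f(sr))=sr=\rho_r(f^{-1}(t))$, using that $sr\in\dom(f)$.

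I do not expect any genuine difficulty here; the only point requiring attention is the bookkeeping with domains, and it is precisely to keep this painless that one carries the auxiliary assertion ``$\dom(f)$ is a right ideal'' through the induction rather than trying to establish the bare identity in isolation. (One could instead phrase the induction in terms of the operators $V(S)$ via the isomorphism $\omega$ of Lemma \ref{lem:isomorphicinverse}, but one would then have to be equally careful about initial and final subspaces, so there is little to gain over the direct combinatorial argument.)
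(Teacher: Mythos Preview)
Your proof is correct and follows essentially the same strategy as the paper's: verify the identity on the generators $\lambda_t$, on their inverses $\lambda_t^\ast$, and then propagate through compositions while carrying along the fact that domains are right ideals. Your packaging---defining the set $P$ and showing it is closed under composition and inversion and contains the generators---is a slightly cleaner organization than the paper's explicit word-by-word induction (which checks $\lambda_t^\ast$ by hand and then moves $\rho_r$ through a word $\lambda_{t_1}^\ast\lambda_{s_1}\cdots\lambda_{t_n}^\ast\lambda_{s_n}$ one factor at a time), but the underlying argument is the same.
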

\begin{proof}
Note that since $\dom(f)=\dom(f^\ast f)\in J(S)$ is a right ideal in $S$, $s\in\dom(f)$ implies $sr\in\dom(f)$ for all $r\in S$. Equation \eqref{eq:rightactioncommutes} clearly holds when $f=\lambda_t$ for some $t\in S$. Suppose now that $f=\lambda_t^\ast$. Let $s\in\dom(\lambda_t^\ast)=tS$. Then there is some $q\in S$ such that $tq=s$ and $\lambda_t^\ast(s)=q$. For any $p\in S$, $\lambda_t^\ast(\rho_r(s))=p$ if and only if $sr=\rho_r(s)=tp$. On the other hand, $\rho_r(\lambda_t^\ast(s))=qr=p$ if and only if $tqr=tp$. Since $s=tq$, this happens if and only if $sr=tp$. So for any $p\in S$, $\rho_r(\lambda_t^\ast(s))=p$ if and only if $\lambda_t^\ast(\rho_r(s))=p$. This shows that $\rho_r(\lambda_t^\ast(s))=\lambda_t^\ast(\rho_r(s))$.

Let $f\in I_l(S)$ be arbitrary. Then there are $n\in\mathbb{N}$ and $s_1,\ldots,s_n,t_1,\ldots,t_n\in S$ such that $f=\lambda_{t_1}^\ast\lambda_{s_1}\cdots\lambda_{t_n}^\ast\lambda_{s_n}$. Since $s\in\dom(f)$,
\[
\lambda_{t_j}^\ast\lambda_{s_j}\cdots\lambda_{t_n}^\ast\lambda_{s_n}(s)\in\dom(\lambda_{t_1}^\ast\lambda_{s_1}\cdots\lambda_{t_{j-1}}^\ast\lambda_{s_{j-1}})
\]
for all $1\leq j\leq n$. So
\[
\rho_r(\lambda_{t_j}^\ast\lambda_{s_j}\cdots\lambda_{t_n}^\ast\lambda_{s_n}(s))\in\dom(\lambda_{t_1}^\ast\lambda_{s_1}\cdots\lambda_{t_{j-1}}^\ast\lambda_{s_{j-1}})
\]
for all $1\leq j\leq n$ since $\dom(\lambda_{t_1}^\ast\lambda_{s_1}\cdots\lambda_{t_{j-1}}^\ast\lambda_{s_{j-1}})$ is a right ideal in $S$. Starting with
\[
\lambda_{t_1}^\ast\lambda_{s_1}\cdots\lambda_{t_n}^\ast\lambda_{s_n}(\rho_r(s))
\]
we can then move $\rho_r$ to the left one step at a time until we get
\[
\rho_r(\lambda_{t_1}^\ast\lambda_{s_1}\cdots\lambda_{t_n}^\ast\lambda_{s_n}(s))
\]
\end{proof}

\begin{corollary}\label{cor:liftrelation}
Let $f\in I_l(S)$. For any $s\in\dom(f)$, $f\lambda_s=\lambda_{f(s)}$
\end{corollary}
\begin{proof}
Note first that $\dom f\lambda_s=\dom\lambda_{f(s)}=S$. By Lemma \ref{lem:rightactioncommutes} we get that for any $r\in S$,
\[
f\lambda_s(r)=f(sr)=f(\rho_r(s))=\rho_r(f(s))=f(s)r=\lambda_{f(s)}(r)
\]
So $f\lambda_s(r)=\lambda_{f(s)}(r)$ for all $r\in S$, hence $f\lambda_s=\lambda_{f(s)}$.
\end{proof}

\begin{lemma}\label{lem:reducedcondexp}
$V(S)$ is $E^\ast$-unitary if and only if for every $V\in V(S)$ we have $E(V)=V$ or $E(V)=0$.
\end{lemma}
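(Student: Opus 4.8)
The plan is to prove both implications directly from the explicit form of $E$ on $V(S)$. First I would record the basic observation that for $V\in V(S)$ and $s\in S$ the vector $V\varepsilon_s$ is either $0$ or $\varepsilon_t$ for some $t\in S$, so that $\iprod{V\varepsilon_s,\varepsilon_s}\in\{0,1\}$, with value $1$ exactly when $V\varepsilon_s=\varepsilon_s$. Thus $E(V)=\chi_X$ where $X=\{s\in S: V\varepsilon_s=\varepsilon_s\}$; in particular $E(V)=0$ precisely when $X=\varnothing$, while by Lemma \ref{lem:eofvisv} $E(V)=V$ precisely when $V$ is idempotent.

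For the ``if'' direction I would assume $E(V)\in\{V,0\}$ for every $V\in V(S)$ and verify the definition of $E^\ast$-unitarity: given $p,q\in V(S)$ with $pq=q$ and $q\neq 0$, I would pick $s\in S$ with $q\varepsilon_s\neq 0$, say $q\varepsilon_s=\varepsilon_t$, and then $p\varepsilon_t=pq\varepsilon_s=\varepsilon_t$ gives $\iprod{E(p)\varepsilon_t,\varepsilon_t}=1$, so $E(p)\neq 0$, hence $E(p)=p$ by hypothesis and $p$ is idempotent by Lemma \ref{lem:eofvisv}, i.e. $p\in L$. (This direction is in fact the instance of Lemma \ref{lem:conditionalexpectationestar} with $A=C^\ast_r(S)$, $P=V(S)$, $D=D_r(S)$: the hypothesis forces $E$ to map $V(S)$, hence $C^\ast_r(S)$ by density and continuity, into $D_r(S)$, so that lemma applies verbatim.)

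For the ``only if'' direction I would assume $V(S)$ is $E^\ast$-unitary, fix $V\in V(S)$, and write $E(V)=\chi_X$ as above. If $X=\varnothing$ there is nothing to prove. Otherwise I would pick $s\in X$ and transport to $I_l(S)$ through the isomorphism $\omega$ of Lemma \ref{lem:isomorphicinverse}: writing $V=\omega(f)$, the relation $V\varepsilon_s=\varepsilon_s$ says $s\in\dom(f)$ and $f(s)=s$, so Corollary \ref{cor:liftrelation} gives $f\lambda_s=\lambda_{f(s)}=\lambda_s$, i.e. $VV_s=V_s$ with $V_s\neq 0$; since $V(S)$ is $E^\ast$-unitary this forces $V$ idempotent, whence $E(V)=V$ by Lemma \ref{lem:eofvisv}.

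I expect the ``only if'' direction to be the only genuine obstacle: the point is to upgrade the single diagonal coincidence $V\varepsilon_s=\varepsilon_s$ to an honest relation $VV_s=V_s$ inside the inverse semigroup $V(S)$, so that $E^\ast$-unitarity can be invoked. This upgrade is exactly what Corollary \ref{cor:liftrelation} provides — ultimately the compatibility of the partial bijections in $I_l(S)$ with the right translation action of $S$ on itself (Lemma \ref{lem:rightactioncommutes}). Everything else is bookkeeping with the conditional expectation.
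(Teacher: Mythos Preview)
Your proposal is correct and follows essentially the same route as the paper: for the ``only if'' direction you use Corollary~\ref{cor:liftrelation} (via $\omega$) to upgrade $V\varepsilon_s=\varepsilon_s$ to $VV_s=V_s$ and then invoke $E^\ast$-unitarity and Lemma~\ref{lem:eofvisv}, exactly as the paper does; for the ``if'' direction the paper simply cites Lemma~\ref{lem:conditionalexpectationestar}, while you additionally unfold that argument by hand, which is fine and indeed makes clear why the hypotheses of that lemma are met.
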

\begin{proof}
Assume $V(S)$ is $E^\ast$-unitary and let $V\in V(S)$. We show that if $V\varepsilon_r=\varepsilon_r$ for some $r\in S$, then $E(V)=V$. Otherwise $E(V)$ is of course $0$. Suppose $V\varepsilon_r=\varepsilon_r$. Using $\omega$ we can get from Corollary \ref{cor:liftrelation} that $VV_r=V_r$. Since $V(S)$ is $E^\ast$-unitary this implies that $V$ is idempotent, so $E(V)=V$ by Lemma \ref{lem:eofvisv}. The converse statement follows from Lemma \ref{lem:conditionalexpectationestar}.
\end{proof}

Note that this implies that if $V(S)$ is $E^\ast$-unitary, then $E(C^\ast_r(S))=D_r(S)$. Of course, $V(S)$ is $E^\ast$-unitary if and only if $I_l(S)$ is $E^\ast$-unitary.

\begin{lemma}\label{lem:sequalst}
Let $s,t\in S$. The following conditions are equivalent.
\begin{enumerate}
\item $s=t$
\item $\lambda_s\lambda_t^\ast$ is idempotent.
\item $\lambda_t^\ast \lambda_s=1$.
\end{enumerate}
\end{lemma}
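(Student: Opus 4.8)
The plan is to establish the cyclic chain of implications (i)$\Rightarrow$(iii)$\Rightarrow$(ii)$\Rightarrow$(i), working entirely inside the inverse semigroup $I_l(S)\subset I(S)$ and relying on three facts already available: that each $\lambda_s$ has domain all of $S$, so $\lambda_s^\ast\lambda_s=i_{\dom(\lambda_s)}=i_S=1$; that $1=i_S$ is the identity of $I_l(S)$; and that the idempotents of $I(S)$ are precisely the restricted identity maps $i_X$, $X\subset S$ (equivalently, one may quote Lemma \ref{lem:simplifiedjs} via the isomorphism $\omega$).

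Implication (i)$\Rightarrow$(iii) is then immediate: $s=t$ gives $\lambda_t^\ast\lambda_s=\lambda_s^\ast\lambda_s=i_S=1$. For (iii)$\Rightarrow$(ii), assume $\lambda_t^\ast\lambda_s=1$; a single application of associativity gives $(\lambda_s\lambda_t^\ast)^2=\lambda_s(\lambda_t^\ast\lambda_s)\lambda_t^\ast=\lambda_s\lambda_t^\ast$, so $\lambda_s\lambda_t^\ast$ is idempotent.

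For the remaining implication (ii)$\Rightarrow$(i) I would first compute the domain of $\lambda_s\lambda_t^\ast$: since $\lambda_t^\ast$ maps $tS$ bijectively onto $S=\dom(\lambda_s)$, we get $\dom(\lambda_s\lambda_t^\ast)=tS$. If $\lambda_s\lambda_t^\ast$ is idempotent it must therefore equal $i_{tS}$, and evaluating both sides at $t\in tS$ gives $t=(\lambda_s\lambda_t^\ast)(t)=\lambda_s(\lambda_t^\ast(t))=\lambda_s(1)=s$, where $\lambda_t^\ast(t)=1$ follows from left cancellation, since $tr=t=t1$ forces $r=1$. There is no serious obstacle here; the only steps warranting a moment's care are the domain computation $\dom(\lambda_s\lambda_t^\ast)=tS$ and the identification of idempotent partial bijections with restricted identities, both of which are immediate from the definitions recalled earlier in the paper.
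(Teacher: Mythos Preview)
Your proof is correct. You traverse the implications in the opposite cycle from the paper: the paper proves (i)$\Rightarrow$(ii)$\Rightarrow$(iii)$\Rightarrow$(i), whereas you prove (i)$\Rightarrow$(iii)$\Rightarrow$(ii)$\Rightarrow$(i). The substantive difference lies in the step that recovers $s=t$. Starting from $\lambda_t^\ast\lambda_s=1$, the paper derives $\lambda_s\lambda_t^\ast\lambda_s=\lambda_s$ and $\lambda_t^\ast\lambda_s\lambda_t^\ast=\lambda_t^\ast$ and then invokes the uniqueness of the generalized inverse in an inverse semigroup to conclude $\lambda_t^\ast=\lambda_s^\ast$. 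Your argument for (ii)$\Rightarrow$(i) is instead a direct computation inside $I(S)$: you compute $\dom(\lambda_s\lambda_t^\ast)=tS$, identify the idempotent with the restricted identity $i_{tS}$, and evaluate at $t$. Your route is more concrete and avoids appealing to the abstract uniqueness axiom; the paper's route is purely algebraic and does not need to unpack the partial-bijection realization. Both are short and self-contained.
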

\begin{proof}
(i)$\Rightarrow$(ii) is trivial.

(ii)$\Rightarrow$(iii): We have $\lambda_s\lambda_t^\ast \lambda_s\lambda_t^\ast=\lambda_s\lambda_t^\ast$. Left multiplying with $\lambda_s^\ast$ and right multiplying with $\lambda_t$ gives the desired equality.

(iii)$\Rightarrow$(i): This implies that $\lambda_s\lambda_t^\ast \lambda_s=\lambda_s$ and $\lambda_t^\ast \lambda_s \lambda_t^\ast=\lambda_t^\ast$. Since these relations are unique for $\lambda_s^\ast$ we get that $\lambda_t^\ast=\lambda_s^\ast$. So $\lambda_s=\lambda_t$ and $s=\lambda_s(1)=\lambda_t(1)=t$.
\end{proof}

\begin{corollary}\label{cor:eastunitarycancellative}
If $I_l(S)$ is $E^\ast$-unitary, $S$ is cancellative.
\end{corollary}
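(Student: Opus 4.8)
The plan is to use the standing assumption that $S$ is left cancellative, so that only right cancellativity remains to be proved. Thus I would start from $s,t,r\in S$ with $sr=tr$ and aim to conclude $s=t$, which by Lemma \ref{lem:sequalst} is the same as showing that the partial bijection $\lambda_s\lambda_t^\ast\in I_l(S)$ is idempotent.

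The idea is to exhibit this element as a left factor fixing a nonzero element, and then invoke $E^\ast$-unitarity. First I would observe that $\dom(\lambda_s\lambda_t^\ast)=tS$, and that for $q\in S$ one has $\lambda_t^\ast(tq)=q$ by left cancellation, hence $(\lambda_s\lambda_t^\ast)(tq)=sq$. In particular, since $tr\in tS=\dom(\lambda_s\lambda_t^\ast)$ and $sr=tr$, the element $tr$ is a fixed point of $\lambda_s\lambda_t^\ast$. Applying Corollary \ref{cor:liftrelation} with $f=\lambda_s\lambda_t^\ast$ and the point $tr\in\dom(f)$ then gives
\[
(\lambda_s\lambda_t^\ast)\,\lambda_{tr}=\lambda_{(\lambda_s\lambda_t^\ast)(tr)}=\lambda_{sr}=\lambda_{tr}.
\]
Here $\lambda_{tr}$ is one of the generators of $I_l(S)$ and is nonzero, since its domain is all of $S$, which contains $1$.

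Now I would apply the definition of $E^\ast$-unitary (Definition \ref{def:eastunitary}) to the relation $(\lambda_s\lambda_t^\ast)\lambda_{tr}=\lambda_{tr}$ with $\lambda_{tr}\neq 0$: this forces $\lambda_s\lambda_t^\ast$ to lie in the semilattice of idempotents of $I_l(S)$. Finally the implication (ii)$\Rightarrow$(i) of Lemma \ref{lem:sequalst} yields $s=t$, so $S$ is right cancellative and hence cancellative. There is no real obstacle here; the only points requiring a little care are the identification $\lambda_t^\ast(tq)=q$, where left cancellativity of $S$ is what makes $t^{-1}(\{tq\})$ the singleton $\{q\}$, and the observation that the witness $\lambda_{tr}$ is genuinely nonzero, so that $E^\ast$-unitarity (rather than the weaker $E$-unitarity) can be applied.
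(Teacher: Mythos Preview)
Your proof is correct and follows essentially the same route as the paper: exhibit an equation of the form $(\lambda_a\lambda_b^\ast)\lambda_c=\lambda_c$ with $\lambda_c\neq 0$, invoke $E^\ast$-unitarity, and finish with Lemma~\ref{lem:sequalst}. The only cosmetic difference is that the paper works with $\lambda_t\lambda_s^\ast$ and derives the fixing relation directly from $\lambda_s\lambda_r=\lambda_{sr}$ and $\lambda_s^\ast\lambda_s=1$ rather than passing through Corollary~\ref{cor:liftrelation}, but the argument is the same.
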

\begin{proof}
Let $s,t,r,p\in S$ and assume that $sr=tr=p$. Since $\lambda_s\lambda_r=\lambda_p$,  $\lambda_s^\ast \lambda_p=\lambda_r$. So $\lambda_t\lambda_s^\ast \lambda_p=\lambda_t\lambda_r=\lambda_p$. Since $I_l(S)$ is $E^\ast$-unitary $\lambda_t\lambda_s^\ast$ is idempotent, so by the previous lemma $s=t$. Hence $S$ is (right) cancellative.
\end{proof}

\begin{lemma}\label{lem:subsemigroupvofs}
Assume $S$ is a subsemigroup of a group $G$. Let $m,n\in\mathbb{N}$ and $s_i,t_i,p_j,q_j\in S$ for $1\leq i\leq n$, $1\leq j\leq m$. Set $f=\lambda_{t_1}^\ast \lambda_{s_1}\cdots \lambda_{t_n}^\ast \lambda_{s_n}$ and $f'=\lambda_{q_1}^\ast \lambda_{p_1}\cdots \lambda_{q_m}^\ast \lambda_{p_m}$, and assume $f,f'\neq 0$. If $f=f'$ then the equality
\begin{equation}\label{eq:groupequality}
t_1^{-1}s_1\cdots t_n^{-1}s_n=q_1^{-1}p_1\cdots q_m^{-1}p_m
\end{equation}
holds in $G$, where $(\cdot)^{-1}$ means taking inverses in $G$.
\end{lemma}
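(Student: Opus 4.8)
The plan is to exploit the fact that $S$ lives inside the group $G$, so that each generator and its adjoint act on their (partial) domains simply as left translations: $\lambda_r(x)=rx$ for all $x\in S$, while $\dom\lambda_r^\ast=rS$ and $\lambda_r^\ast(y)=r^{-1}y$ for $y\in rS$, the products being taken in $G$. The whole word $f$ will then act, on its domain, as left multiplication by the single group element $t_1^{-1}s_1\cdots t_n^{-1}s_n$, and similarly for $f'$; once that is known the conclusion is immediate since $G$ is a group.

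First I would make this precise for $f$ by a downward induction on $j$: for every $x\in\dom f$, the partially composed map $\lambda_{t_j}^\ast\lambda_{s_j}\cdots\lambda_{t_n}^\ast\lambda_{s_n}$ is defined at $x$ and sends it to $t_j^{-1}s_j\cdots t_n^{-1}s_n\,x\in S$. The induction step just uses that one multiplies on the left (in $G$) by $s_k$, respectively by $t_k^{-1}$, at each stage, and that ``$x\in\dom f$'' is exactly the assertion that every one of these partial images still lies in $S$, so that applying the next generator or adjoint is legitimate. (If one prefers, the bookkeeping of domains can be imported from Lemma \ref{lem:rightactioncommutes}, but the direct computation is cleanest.) Taking $j=1$ gives $f(x)=(t_1^{-1}s_1\cdots t_n^{-1}s_n)x$ for all $x\in\dom f$, and the same argument applied to $f'$ gives $f'(x)=(q_1^{-1}p_1\cdots q_m^{-1}p_m)x$ for all $x\in\dom f'$.

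To finish: since $f\neq 0$, its domain is a nonempty subset of $S$, and since $f=f'$ this is also $\dom f'$. Choosing any $x$ in this common domain, we get in $G$ the equality $(t_1^{-1}s_1\cdots t_n^{-1}s_n)x=f(x)=f'(x)=(q_1^{-1}p_1\cdots q_m^{-1}p_m)x$, and cancelling $x$ on the right in the group $G$ yields \eqref{eq:groupequality}. I do not anticipate any genuine obstacle here; the only point demanding care is the induction on partial domains, namely verifying that the hypothesis $x\in\dom f$ really does force all intermediate images into $S$, so that the group-theoretic computation is valid at every step.
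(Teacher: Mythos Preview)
Your proposal is correct and is essentially the paper's own argument: pick an element $r$ in the (nonempty) domain of $f=f'$, observe that on its domain $f$ acts as left multiplication by the group element $t_1^{-1}s_1\cdots t_n^{-1}s_n$ (and similarly for $f'$), and cancel $r$ in $G$. The only difference is that you spell out the downward induction verifying the intermediate images lie in $S$, while the paper leaves this implicit.
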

\begin{proof}
Since $f,f'\neq 0$ we can pick some $r\in \dom(f)$. Then the equality $f(r)=f'(r)$ gives
\[
t_1^{-1}s_1\cdots t_n^{-1}s_nr=q_1^{-1}p_1\cdots q_m^{-1}p_mr
\]
where $(\cdot)^{-1}$ denotes the preimage by left multiplication in $S$. However this implies that the same relation holds in $G$, where $(\cdot)^{-1}$ now stands for the inverse operation in $G$. Cancelling with $r$ we get \eqref{eq:groupequality}. 
\end{proof}

The proof of the next proposition uses techniques similar to those employed by Jiang in \cite{jiang03}.

\begin{proposition}\label{prop:groupstrongeastunitary}
Let $S$ be a left cancellative semigroup. Then $S$ embeds into a group if and only if $I_l(S)$ is strongly $E^\ast$-unitary.
\end{proposition}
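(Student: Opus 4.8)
The plan is to prove both implications via the description of $I_l(S)$ provided by Lemma~\ref{lem:isomorphicinverse}: through the isomorphism $\omega\colon I_l(S)\to V(S)$, every element of $I_l(S)$ has the form $f=\lambda_{t_1}^\ast\lambda_{s_1}\cdots\lambda_{t_n}^\ast\lambda_{s_n}$ (consecutive factors collapse via $\lambda_a\lambda_b=\lambda_{ab}$ and $\lambda_a^\ast\lambda_b^\ast=\lambda_{ba}^\ast$, and one pads with $\lambda_1=\lambda_1^\ast=1_{I_l(S)}$ to reach this shape). The link between gradings and group embeddings is the expression $f\mapsto t_1^{-1}s_1\cdots t_n^{-1}s_n$.

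Suppose first that $S$ is a subsemigroup of a group $G$. I would define $\varphi\colon I_l(S)^0\to G^0$ by $\varphi(0)=0$ and, for nonzero $f=\lambda_{t_1}^\ast\lambda_{s_1}\cdots\lambda_{t_n}^\ast\lambda_{s_n}$, by $\varphi(f)=t_1^{-1}s_1\cdots t_n^{-1}s_n$ with inverses and products taken in $G$. Well-definedness on nonzero elements is precisely Lemma~\ref{lem:subsemigroupvofs}. Concatenating standard forms shows $\varphi(fg)=\varphi(f)\varphi(g)$ whenever $fg\neq0$, and $\varphi^{-1}(\{0\})=\{0\}$ is clear, so $\varphi$ is a grading; it remains to see that $\varphi^{-1}(\{1_G\})=L\setminus\{0\}$. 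For the inclusion $\supseteq$: a nonzero idempotent is $i_X$ with $X=t_1^{-1}s_1\cdots t_n^{-1}s_nS\in J(S)$, and writing it as $(\lambda_{t_1}^\ast\lambda_{s_1}\cdots\lambda_{t_n}^\ast\lambda_{s_n})(\lambda_{s_n}^\ast\lambda_{t_n}\cdots\lambda_{s_1}^\ast\lambda_{t_1})$ makes $\varphi(i_X)$ telescope to $1_G$. For the inclusion $\subseteq$: if $f\neq0$ and $\varphi(f)=1_G$, pick $r\in\dom f$; because $S$ is left cancellative, each backward step $\lambda_{t_j}^\ast$ has a unique $S$-preimage, so successively applying $\lambda_{s_n},\lambda_{t_n}^\ast,\dots,\lambda_{s_1}$ to $r$ produces exactly $t_1^{-1}s_1\cdots t_n^{-1}s_nr=\varphi(f)r=r$ computed in $G$; hence $f$ fixes $\dom f$ pointwise, i.e.\ $f=i_{\dom f}\in L$.

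Conversely, suppose $\varphi\colon I_l(S)^0\to G^0$ is an idempotent pure grading. The assignment $s\mapsto\lambda_s$ is an injective monoid homomorphism of $S$ into $I_l(S)$ (injective since $\lambda_s(1)=s$) with each $\lambda_s\neq0$, so $\psi\colon S\to G$, $\psi(s)=\varphi(\lambda_s)$, is well defined and, using $\lambda_s\lambda_t=\lambda_{st}\neq0$ and idempotent purity applied to $1_{I_l(S)}$, a monoid homomorphism. For injectivity: applying $\varphi$ to $pp^\ast p=p$ in $G$ gives $\varphi(p^\ast)=\varphi(p)^{-1}$ for every $p\neq0$, and $\lambda_s\lambda_t^\ast\neq0$ always because $\dom(\lambda_s\lambda_t^\ast)=tS$; thus $\psi(s)=\psi(t)$ forces $\varphi(\lambda_s\lambda_t^\ast)=\varphi(\lambda_s)\varphi(\lambda_t)^{-1}=1_G$, so $\lambda_s\lambda_t^\ast$ is idempotent and Lemma~\ref{lem:sequalst} gives $s=t$. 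Therefore $S$ embeds into $G$ (indeed into the subgroup generated by $\psi(S)$).

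The step I expect to be the real obstacle is the inclusion $\varphi^{-1}(\{1_G\})\subseteq L\setminus\{0\}$ in the forward direction: one has to know that the partial bijection $f$ coincides on its domain with the formal group expression $t_1^{-1}s_1\cdots t_n^{-1}s_n$, and it is exactly here that left cancellativity is used, to guarantee that each $\lambda_{t_j}^\ast$-step is single-valued and so agrees with inversion in $G$. Everything else reduces to bookkeeping with standard forms together with the already-established Lemmas~\ref{lem:subsemigroupvofs} and~\ref{lem:sequalst}.
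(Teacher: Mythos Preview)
Your proof is correct and follows essentially the same route as the paper's: define $\varphi$ by the formal group word, invoke Lemma~\ref{lem:subsemigroupvofs} for well-definedness, argue that $f(r)=\varphi(f)r$ in $G$ to obtain idempotent purity, and for the converse use $\varphi(\lambda_s\lambda_t^\ast)=1\Rightarrow\lambda_s\lambda_t^\ast$ idempotent together with Lemma~\ref{lem:sequalst}. The only differences are cosmetic: you spell out the inclusion $L\setminus\{0\}\subseteq\varphi^{-1}(\{1_G\})$ (which the paper had already noted holds for any grading) and derive $\varphi(p^\ast)=\varphi(p)^{-1}$ from $pp^\ast p=p$ rather than from $\lambda_t^\ast\lambda_t=1$, but these are the same argument.
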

\begin{proof}
Suppose first that $S$ embeds into a group $G$. We omit writing the embedding homomorphism, and instead view $S$ as a subsemigroup of $G$. Define a grading $\varphi:I_l(S)^0\to G^0$ by
\begin{align*}
\varphi(0)&=0\\
\varphi(\lambda_{t_1}^\ast \lambda_{s_1}\cdots \lambda_{t_n}^\ast \lambda_{s_n})&=t_1^{-1}s_1\cdots t_n^{-1}s_n\\
&\mbox{ when } \lambda_{t_1}^\ast \lambda_{s_1}\cdots \lambda_{t_n}^\ast \lambda_{s_n} \neq 0
\end{align*}
This is well-defined because of Lemma $\ref{lem:subsemigroupvofs}$. Suppose $\varphi(f)=1$ for some $f\in I_l(S)$. Then if $f=\lambda_{t^1}^\ast \lambda_{s_1}\cdots \lambda_{t_n}^\ast \lambda_{s_n}$, $t_1^{-1}s_1\cdots t_n^{-1}s_n=1$. So $f(r)=t_1^{-1}s_1\cdots t_n^{-1}s_nr=r$ for all $r\in\dom(f)$. Hence $f$ is idempotent, and $\varphi$ is idempotent pure. This shows that $I_l(S)$ is strongly $E^\ast$-unitary.

Now suppose $I_l(S)$ is strongly $E^\ast$-unitary by some idempotent pure grading $\varphi:I_l(S)^0\to G^0$. For any $t\in S$, $1=\varphi(\lambda_t^\ast \lambda_t)=\varphi(\lambda_t^\ast)\varphi(\lambda_t)$, so $\varphi(\lambda_t)^{-1}=\varphi(\lambda_t^\ast)$. For any $s,t\in S$, if $\varphi(\lambda_s)=\varphi(\lambda_t)$, then $\varphi(\lambda_s\lambda_t^\ast)=1$, so $\lambda_s\lambda_t^\ast$ is idempotent, and by Lemma \ref{lem:sequalst} $s=t$. This implies that the homomorphism $S\to G$ given by $s\mapsto\varphi(\lambda_s)$ is injective.
\end{proof}

We want to find a relation between $C^\ast_{r,0}(I_l(S))$ and $C^\ast_r(S)$.

\begin{lemma}
Let $T:\ell^2(S)\to\ell^2(I_l(S))$ be the isometry defined by
\[
T\varepsilon_s=\delta_{\lambda_s}\qquad s\in S
\]
Then $T^\ast\Lambda(f) T=\omega(f)$ for all $f\in I_l(S)$.
\end{lemma}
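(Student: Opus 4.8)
The plan is a direct computation on the orthonormal basis $\{\varepsilon_s\}_{s\in S}$ of $\ell^2(S)$, reducing everything to the relation $f\lambda_s=\lambda_{f(s)}$ already established in Corollary \ref{cor:liftrelation}.

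First I would record the elementary facts about $T$. Since $\lambda_s(1)=s$, the assignment $s\mapsto\lambda_s$ is injective, so $\{\delta_{\lambda_s}\}_{s\in S}$ is an orthonormal family in $\ell^2(I_l(S))$ and $T$ indeed extends to an isometry. Its adjoint acts on basis vectors by $T^\ast\delta_g=\varepsilon_t$ when $g=\lambda_t$ for some (necessarily unique) $t\in S$, and $T^\ast\delta_g=0$ when $g$ is not of this form.

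Next, fix $f\in I_l(S)$ and compute $\Lambda(f)T\varepsilon_s=\Lambda(f)\delta_{\lambda_s}$. By the definition of the left regular representation $\Lambda$, this equals $\delta_{f\lambda_s}$ exactly when $f^\ast f\lambda_s=\lambda_s$ and is $0$ otherwise. Here I would use that $f^\ast f=i_{\dom(f)}$ and that $\dom(f)=\dom(f^\ast f)\in J(S)$ is a right ideal of $S$ (Lemma \ref{lem:simplifiedjs}); a short domain computation gives $\dom(i_{\dom(f)}\lambda_s)=s^{-1}(\dom(f))$, so $f^\ast f\lambda_s=\lambda_s$ holds if and only if $s^{-1}(\dom(f))=S$, which, because $\dom(f)$ is a right ideal, is equivalent to $s\in\dom(f)$. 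When $s\in\dom(f)$, Corollary \ref{cor:liftrelation} identifies $f\lambda_s=\lambda_{f(s)}$, so $\Lambda(f)T\varepsilon_s=\delta_{\lambda_{f(s)}}$; when $s\notin\dom(f)$ it is $0$.

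Finally I would apply $T^\ast$. Since $\lambda_{f(s)}$ is in the range of $s\mapsto\lambda_s$, we get $T^\ast\delta_{\lambda_{f(s)}}=\varepsilon_{f(s)}$, hence $T^\ast\Lambda(f)T\varepsilon_s=\varepsilon_{f(s)}$ for $s\in\dom(f)$ and $T^\ast\Lambda(f)T\varepsilon_s=0$ for $s\notin\dom(f)$. This is precisely $\omega(f)\varepsilon_s$ by the definition of $\omega$ in Lemma \ref{lem:isomorphicinverse}, and since the $\varepsilon_s$ span a dense subspace of $\ell^2(S)$ the identity $T^\ast\Lambda(f)T=\omega(f)$ follows. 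There is no serious obstacle; the only points needing a little care are the equivalence $f^\ast f\lambda_s=\lambda_s\iff s\in\dom(f)$, which relies on $\dom(f)$ being a right ideal, and the bookkeeping in applying $T^\ast$.
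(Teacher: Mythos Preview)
Your proof is correct and follows essentially the same route as the paper's: compute $T^\ast\Lambda(f)T$ on basis vectors by splitting into the cases $s\in\dom(f)$ and $s\notin\dom(f)$, using Corollary~\ref{cor:liftrelation} to identify $f\lambda_s=\lambda_{f(s)}$ in the first case. The only minor difference is that the paper handles the equivalence $f^\ast f\lambda_s=\lambda_s\iff s\in\dom(f)$ by applying Corollary~\ref{cor:liftrelation} to the idempotent $f^\ast f$, whereas you do a direct domain computation; both are fine.
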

\begin{proof}
Let $f\in I_l(S)$ and $s\in\dom(f)$. Then $s\in\dom(f^\ast f)$, so $f^\ast f(s)=s$. By Corollary \ref{cor:liftrelation}, $f^\ast f\lambda_s=\lambda_s$ and $f\lambda_s=\lambda_{f(s)}$. Now by the definition of $\Lambda$,
\[
T^\ast \Lambda(f)T\varepsilon_s=T^\ast\Lambda(f)\delta_{\lambda_s}=T^\ast \delta_{f\lambda_s}=T^\ast\delta_{\lambda_{f(s)}}=\varepsilon_{f(s)}=\omega(f)\varepsilon_s
\]
On the other hand, if $s\notin\dom(f)$ then $s\notin\dom(f^\ast f)$. Thus $f^\ast f \lambda_s\neq \lambda_s$ and we get $T^\ast\Lambda(f)T\varepsilon_s=T^\ast\Lambda(f)\delta_{\lambda_s}=0$. So $T^\ast\Lambda(f)T\varepsilon_s=\omega(f)\varepsilon_s$ for any $s\in S$. This shows that $T^\ast\Lambda(f)T=\omega(f)$ for any $f\in I_l(S)$.
\end{proof}

\begin{corollary}
There is a surjective $\ast$-homomorphism $h:C^\ast_{r,0}(I_l(S))\to C^\ast_r(S)$ such that $h(\Lambda_0(f))=\omega(f)$ for all $f\in I_l(S)$.
\end{corollary}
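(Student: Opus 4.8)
The plan is to build $h$ as the extension of the concrete $\ast$-homomorphism $\Lambda(a)\mapsto\omega(a)$ defined on the dense $\ast$-subalgebra $\Lambda(\mathbb{C}I_l(S))\subset C^\ast_r(I_l(S))$, using the identity $T^\ast\Lambda(f)T=\omega(f)$ from the previous lemma only to control norms, and then to push the resulting map down to the quotient $C^\ast_{r,0}(I_l(S))$.

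First I would note that $\omega$ extends by linearity to a $\ast$-representation $\omega:\mathbb{C}I_l(S)\to B(\ell^2(S))$ (it is the composition of the natural map $\mathbb{C}I_l(S)\to\mathbb{C}V(S)$ with the inclusion $V(S)\hookrightarrow B(\ell^2(S))$), and that its image lies in $C^\ast_r(S)$. Since $\Lambda$ is a faithful $\ast$-representation of $\mathbb{C}I_l(S)$, the assignment $\Lambda(a)\mapsto\omega(a)$ is a well-defined $\ast$-homomorphism from $\Lambda(\mathbb{C}I_l(S))$ into $C^\ast_r(S)$. By the previous lemma and linearity, $T^\ast\Lambda(a)T=\omega(a)$ for every $a\in\mathbb{C}I_l(S)$, so, because $T$ is an isometry,
\[
\norm{\omega(a)}_{C^\ast_r(S)}=\norm{T^\ast\Lambda(a)T}\le\norm{\Lambda(a)}_{C^\ast_r(I_l(S))}.
\]
Hence $\Lambda(a)\mapsto\omega(a)$ is contractive and therefore extends, uniquely by density, to a $\ast$-homomorphism $h':C^\ast_r(I_l(S))\to C^\ast_r(S)$ with $h'(\Lambda(f))=\omega(f)$ for all $f\in I_l(S)$. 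It is surjective because its range is a $C^\ast$-subalgebra of $C^\ast_r(S)$ containing every $\omega(\lambda_s)=V_s$, and the $V_s$ generate $C^\ast_r(S)$.

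Finally I would push $h'$ down to the quotient. When $I_l(S)$ has a zero, the empty partial bijection $0_{I_l(S)}$ satisfies $\omega(0_{I_l(S)})=0$ in $B(\ell^2(S))$ since $\dom(0_{I_l(S)})=\varnothing$; thus $h'$ vanishes on the ideal $\Lambda(\mathbb{C}0_{I_l(S)})$ and descends to $h:C^\ast_{r,0}(I_l(S))=C^\ast_r(I_l(S))/\Lambda(\mathbb{C}0_{I_l(S)})\to C^\ast_r(S)$ with $h(\Lambda_0(f))=\omega(f)$, surjectivity being inherited from $h'$. (If $I_l(S)$ has no zero one simply interprets $C^\ast_{r,0}(I_l(S))$ as $C^\ast_r(I_l(S))$ and takes $h=h'$.)

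I expect the only genuinely delicate point to be the one already flagged: compression by the isometry $T$ is completely positive but not multiplicative, so one cannot define $h'$ as $a\mapsto T^\ast aT$ directly; the lemma's role is purely to supply the norm bound, while multiplicativity and the $\ast$-property come for free from the algebraic map $\Lambda(a)\mapsto\omega(a)$ on $\mathbb{C}I_l(S)$ together with the faithfulness of $\Lambda$ there. Everything else is routine.
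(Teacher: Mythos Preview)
Your proof is correct and follows essentially the same path as the paper's: build $h'$ on $C^\ast_r(I_l(S))$, check it agrees with $\omega$ on generators, and descend to the quotient. The only notable difference is one of packaging. The paper \emph{does} define $h'$ globally by $h'(a)=T^\ast aT$; it then observes that this continuous map is multiplicative on the dense $\ast$-subalgebra $\Lambda(\mathbb{C}I_l(S))$ (precisely because there it coincides with $\omega$), and multiplicativity extends by continuity to all of $C^\ast_r(I_l(S))$. So your cautionary remark that ``one cannot define $h'$ as $a\mapsto T^\ast aT$ directly'' is a bit too strong: compression is not multiplicative in general, but once it is multiplicative on a dense subalgebra and continuous everywhere, that is enough. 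Your route---defining the algebraic map $\Lambda(a)\mapsto\omega(a)$ first and invoking the compression identity only for the norm bound---is a perfectly clean alternative and arguably makes the logic of the extension more transparent, but it is not genuinely different from what the paper does.
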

\begin{proof}
Define $h':C^\ast_r(I_l(S))\to C^\ast_r(S)$ by $h'(a)=T^\ast a T$. Then $h'$ is a $\ast$-homomorphism on the span of $\Lambda(I_l(S))$. Since this span is dense in $C^\ast_r(I_l(S))$ and since $h'$ is continuous, it has to be a $\ast$-homomorphism on all of $C^\ast_r(I_l(S))$. Since $h'$ sends $\Lambda(0)$ to $0$ whenever $0\in I_l(S)$, it descends to a $\ast$-homomorphism $h:C^\ast_{r,0}(I_l(S))\to C^\ast_r(S)$ with the desired properties.
\end{proof}

\begin{theorem}\label{thm:ucondisomcstar}
Suppose $I_l(S)$ is $E^\ast$-unitary. Then the map
\[
h:C^\ast_{r,0}(I_l(S))\to C^\ast_r(S)
\]
is an isomorphism if and only if $J(S)$ is independent.
\end{theorem}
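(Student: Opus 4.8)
The plan is to exploit the two faithful conditional expectations that $E^\ast$-unitarity provides and to translate independence of $J(S)$ into injectivity of the canonical map studied in Proposition~\ref{prop:ucondition}. Write $L$ for the semilattice of idempotents of $I_l(S)$, so $L\simeq J(S)\cup\{\varnothing\}$, and identify the diagonal $D_r(S)$ with $C^\ast(L;\iota)$ for the inclusion $\iota: J(S)\cup\{\varnothing\}\to 2^S$. Let $\pi: C^\ast_0(L)\to D_r(S)$ be the resulting canonical $\ast$-homomorphism; it is always surjective, and by Corollary~\ref{cor:ucondition} it is injective precisely when $J(S)$ is independent. Since $I_l(S)$ is $E^\ast$-unitary, Proposition~\ref{prop:inversesemiconditional} supplies a faithful conditional expectation $E_{r,0}: C^\ast_{r,0}(I_l(S))\to C^\ast_0(L)$, and by Lemma~\ref{lem:reducedcondexp} together with the remark following it the diagonal expectation $E: B(\ell^2(S))\to\ell^\infty(S)$ restricts to a faithful conditional expectation $C^\ast_r(S)\to D_r(S)$.

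The heart of the matter is the identity $\pi\circ E_{r,0}=E\circ h$. Both sides are bounded and linear, so it suffices to verify it on a generator $\Lambda_0(f)$ with $f\in I_l(S)$: if $f$ is idempotent the two sides agree by inspection, both giving the projection $\omega(f)\in D_r(S)$; if $f$ is not idempotent the left side is $0$ while the right side is $E(\omega(f))$, which vanishes by Lemma~\ref{lem:reducedcondexp} because $\omega(f)$ is then not a projection. Granting this square, both implications are short. If $J(S)$ is independent then $\pi$ is injective, and for $a$ with $h(a)=0$ one gets $\pi\big(E_{r,0}(a^\ast a)\big)=E\big(h(a)^\ast h(a)\big)=0$, hence $E_{r,0}(a^\ast a)=0$, hence $a^\ast a=0$ by faithfulness of $E_{r,0}$, so $a=0$ and the surjection $h$ is an isomorphism. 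Conversely, suppose $h$ is an isomorphism and $b\in C^\ast_0(L)$ satisfies $\pi(b)=0$. Since $E_{r,0}$ is the identity on its range, $E_{r,0}(b^\ast b)=b^\ast b$, so $E\big(h(b)^\ast h(b)\big)=\pi(b^\ast b)=0$, whence $h(b)^\ast h(b)=0$ by faithfulness of $E$, so $h(b)=0$ and therefore $b=0$. Thus $\pi$ is injective and $J(S)$ is independent by Corollary~\ref{cor:ucondition}.

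The only genuinely delicate point is the commutativity of the square, and within it the one place where the hypothesis is really used: that $E$ annihilates $\omega(f)$ for every non-idempotent $f\in I_l(S)$, which is exactly what Lemma~\ref{lem:reducedcondexp} yields under $E^\ast$-unitarity. Everything else is bookkeeping: matching the abstract $C^\ast_0(L)$ with the concrete diagonal picture, the identification $D_r(S)\simeq C^\ast(L;\iota)$, and the fact that a conditional expectation restricts to the identity on its range.
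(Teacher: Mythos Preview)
Your proof is correct and follows essentially the same route as the paper: identify the restriction of $h$ to the diagonal with the map $\pi$ of Proposition~\ref{prop:ucondition}, establish the intertwining $\pi\circ E_{r,0}=E\circ h$ (the paper writes this as $E\circ h=h\circ E_{r,0}$, which is the same once one notes $h|_{C^\ast_0(L)}=\pi$), and conclude via faithfulness of $E_{r,0}$. The only minor difference is in the converse direction: the paper observes directly that $h|_{C^\ast_0(L)}=\pi$, so non-injectivity of $\pi$ immediately gives non-injectivity of $h$ without invoking the $E^\ast$-unitary hypothesis, whereas you route the argument through the commuting square and faithfulness of $E$; both are fine.
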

\begin{proof}
Recall that $D_r(S)$ is the diagonal subalgebra of $C^\ast_r(S)$ generated by $J(S)$. Then $D_r(S)$ is $C^\ast(J(S)^0;\iota)$ as described in Proposition \ref{prop:ucondition} and the paragraphs before it. Here $\iota:J(S)^0\to 2^S$ is the inclusion map.

The restriction of $h$ to $C^\ast_0(J(S))$ (which we can identify with the subalgebra generated by the image of $J(S)$ in $C^\ast_{r,0}(I_l(S))$ by Corollary \ref{cor:maxembeddednorm}) maps onto $D_r(S)$, and this restriction must necessarily be equal to the map $\pi$ as described in and before Proposition \ref{prop:ucondition}. According to this proposition and Corollary \ref{cor:ucondition}, $h|_{C^\ast_0(J(S))}$ is an isomorphism if and only if $J(S)$ is independent. This means that if $J(S)$ is not independent, $h$ is not injective.

Suppose $J(S)$ is independent and $I_l(S)$ is $E^\ast$-unitary. By Proposition \ref{prop:inversesemiconditional} there is a faithful conditional expectation $E_{r,0}:C^\ast_{r,0}(I_l(S))\to C^\ast_0(J(S))$. As a consequence of Lemma \ref{lem:reducedcondexp}, $E(C^\ast_r(S))=D_r(S)$. Moreover for any $V\in V(S)$, $E(V)=V$ if and only if $V$ is idempotent, and $E(V)=0$ otherwise. Using the properties of $E_{r,0}$ given in Proposition \ref{prop:inversesemiconditional} it follows that $E\circ h=h\circ E_{r,0}$

Now assume that $h(a)=0$ for some $a\in C^\ast_{r,0}(I_l(S))$. Then $h(a^\ast a)=0$, so $E(h(a^\ast a))=0=h(E_{r,0}(a^\ast a))$. Since $h$ is an isomorphism on the image of $E_{r,0}$, $E_{r,0}(a^\ast a)=0$, so $a^\ast a=a=0$ since $E_{r,0}$ is faithful. This shows that $h$ is an isomorphism.
\end{proof}

We can use the equality $D_r(S)=C^\ast(J(S)^0;\iota)$ to describe the characters on $D_r(S)$. Proposition \ref{prop:ucondition} and Corollary \ref{cor:ucondition} imply that when $J(S)$ is independent, the characters on $C^\ast(J(S)^0;\iota)$ are uniquely determined by the filters on $J(S)^0$. When $S$ is algebraically ordered and satisfies Clifford's condition, Proposition \ref{prop:semigrouplattice} tells us that $(S^0,\wedge)\simeq J(S)^0$. So in this case the characters on $D_r(S)$ correspond to the filters on $(S^0,\wedge)$. It is not difficult to see that the filters on $(S^0,\wedge)$ are exactly what Nica calls non-void hereditary directed subsets of $S$ in subsection 6.2 of \cite{nica92}. This is sometimes called the Nica spectrum of $S$. In general, the set of characters on $D_r(S)$ corresponds to some subset of the set of filters on $J(S)^0$, but it is not always obvious what this subset is.

Doing computations in $I_l(S)$ can be difficult, but if $S$ satisfies Clifford's condition it becomes easier. Note that $S$ satisfies Clifford's condition exactly when $I_l(S)$ is $0$-bisimple. We will however not use this fact explicitly in this paper. See for instance \cite{clifford53,lawson97} or \cite{jiang03} for more information on this. 

\begin{proposition}\label{prop:cliffordsimplification}
The following conditions are equivalent
\begin{enumerate}
\item $S$ satisfies Clifford's condition.
\item For all $s,t\in S$ such that $\lambda_t^\ast \lambda_s\neq 0$ there exist $p,q\in S$ such that $\lambda_t^\ast \lambda_s=\lambda_p\lambda_q^\ast$.
\item $I_l(S)\setminus\{0\}=\{\lambda_p \lambda_q^\ast:p,q\in S\}$.
\end{enumerate}
\end{proposition}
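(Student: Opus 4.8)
The plan is to prove the cyclic chain (iii) $\Rightarrow$ (ii) $\Rightarrow$ (i) $\Rightarrow$ (iii). The implication (iii) $\Rightarrow$ (ii) is immediate, since $\lambda_t^\ast\lambda_s$ lies in $I_l(S)$, so when it is nonzero (iii) already puts it in the desired form. Before the other two implications I would record two elementary facts. First, by merging consecutive generators of the same type via $\lambda_s\lambda_t=\lambda_{st}$ and padding the ends with $\lambda_1=\mathrm{id}$, every element of $I_l(S)$ can be written as $\lambda_{t_1}^\ast\lambda_{s_1}\cdots\lambda_{t_n}^\ast\lambda_{s_n}$ (this presentation is already used freely in the paper). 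Second, the building block $\lambda_t^\ast\lambda_s$ is the partial bijection with domain $s^{-1}(tS)$ and range $t^{-1}(sS)$ which sends $r$ to the unique $b\in S$ with $tb=sr$; in particular it is nonzero precisely when $sS\cap tS\neq\varnothing$.

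For (ii) $\Rightarrow$ (i), let $s,t\in S$ with $sS\cap tS\neq\varnothing$. Then $\lambda_t^\ast\lambda_s\neq0$, so (ii) gives $\lambda_t^\ast\lambda_s=\lambda_p\lambda_q^\ast$ for some $p,q$. Comparing final projections of the two sides — that of $\lambda_p\lambda_q^\ast$ is $\lambda_p\lambda_q^\ast\lambda_q\lambda_p^\ast=\lambda_p\lambda_p^\ast=i_{pS}$, that of $\lambda_t^\ast\lambda_s$ is $i_{t^{-1}(sS)}$ — yields $t^{-1}(sS)=pS$, and then the identity $t\,t^{-1}(X)=tS\cap X$ gives $sS\cap tS=(tp)S$, i.e. Clifford's condition.

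The content is in (i) $\Rightarrow$ (iii), which I would prove by induction on $n$, showing that $\lambda_{t_1}^\ast\lambda_{s_1}\cdots\lambda_{t_n}^\ast\lambda_{s_n}$ is either $0$ or of the form $\lambda_p\lambda_q^\ast$. In the base case $n=1$, assume $f=\lambda_t^\ast\lambda_s\neq0$; then Proposition \ref{prop:semigrouplattice}(ii), which is equivalent to Clifford's condition, writes the nonempty set $\dom f=s^{-1}(tS)$ as $aS$ for some $a\in S$. Let $b$ be the unique element with $tb=sa$. Then $\lambda_b\lambda_a^\ast$ has domain $aS$ and sends $ar\mapsto br$, while for $ar\in aS=\dom f$ one has $t\,f(ar)=s(ar)=(sa)r=(tb)r=t(br)$, so $f(ar)=br$ by left cancellation; hence $f=\lambda_b\lambda_a^\ast$. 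For the inductive step, split a product of $n+1$ blocks as $g\cdot(\lambda_{t_{n+1}}^\ast\lambda_{s_{n+1}})$; if the product is nonzero both factors are, so the inductive hypothesis and the base case give $g=\lambda_p\lambda_q^\ast$ and $\lambda_{t_{n+1}}^\ast\lambda_{s_{n+1}}=\lambda_b\lambda_a^\ast$, and the interior block $\lambda_q^\ast\lambda_b$ — nonzero, else the whole product vanishes — is $\lambda_c\lambda_d^\ast$ by the base case. The product then collapses to $\lambda_p\lambda_c\lambda_d^\ast\lambda_a^\ast=\lambda_{pc}\lambda_{ad}^\ast$, completing the induction; since every element of $I_l(S)$ is such a product, (iii) follows.

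I expect the base case of (i) $\Rightarrow$ (iii) to be the only real obstacle: the rest is normal-form bookkeeping and elementary computations with domains and ranges, but passing from the intrinsically two-sided constraint $tb=sr$ describing $\lambda_t^\ast\lambda_s$ to the one-sided form $\lambda_b\lambda_a^\ast$ is exactly the place where Clifford's condition, in the guise of the principality of $s^{-1}(tS)$ from Proposition \ref{prop:semigrouplattice}(ii), is indispensable.
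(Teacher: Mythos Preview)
Your argument is correct and follows essentially the same strategy as the paper: the implication (iii)$\Rightarrow$(ii) is trivial, (ii)$\Rightarrow$(i) is a short computation, and the core step is the inductive collapse (i)$\Rightarrow$(iii) whose base case is precisely (i)$\Rightarrow$(ii). The only cosmetic differences are that the paper argues (ii)$\Rightarrow$(i) by multiplying $\lambda_t^\ast\lambda_s=\lambda_p\lambda_q^\ast$ on the left by $\lambda_t$ and on the right by $\lambda_s^\ast$ and then invoking Lemma~\ref{lem:sequalst}, whereas you read off the final projection directly; and in the base case the paper chooses $r$ with $sS\cap tS=rS$ while you use the equivalent form $s^{-1}(tS)=aS$ from Proposition~\ref{prop:semigrouplattice}(ii) --- these amount to the same thing.
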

\begin{proof}
(i)$\Rightarrow$(ii): Let $s,t\in S$. If $\lambda_t^\ast \lambda_s\neq 0$, then $sS\cap tS\neq \varnothing$, so $sS\cap tS=rS$ for some $r\in S$. Since $r\in sS\cap tS$, $p:=t^{-1}(r)$ and $q:=s^{-1}(r)$ exist, and $\lambda_t\lambda_p=\lambda_r$, so we get that $\lambda_t^\ast \lambda_r=\lambda_t^\ast\lambda_t\lambda_p=\lambda_p$. Similarly $\lambda_s^\ast \lambda_r=\lambda_q$. By the definition of $r$ we have
\[
\lambda_t\lambda_t^\ast \lambda_s\lambda_s^\ast =i_{tS}i_{sS}=i_{rS}= \lambda_r\lambda_r^\ast
\]
So multiplying from the left by $\lambda_t^\ast$ and from the right by $\lambda_s$ we get
\[
\lambda_t^\ast \lambda_s=(\lambda_t^\ast \lambda_r)(\lambda_r^\ast \lambda_s)=\lambda_p \lambda_q^\ast
\]

(ii)$\Rightarrow$(i): Let $s,t\in S$ such that $sS\cap tS\neq\varnothing$. Then $\lambda_t^\ast \lambda_s\neq 0$, so there exists $p,q\in S$ with $\lambda_t^\ast \lambda_s=\lambda_p\lambda_q^\ast$. This means that $\lambda_t\lambda_t^\ast \lambda_s\lambda_s^\ast=\lambda_{tp}\lambda_{sq}^\ast$. This element is idempotent, so $tp=sq$ by Lemma \ref{lem:sequalst}. Write $r=tp=sq$. This gives $\lambda_t\lambda_t^\ast \lambda_s\lambda_s^\ast=\lambda_r\lambda_r^\ast$, which is equivalent to $sS\cap tS=rS$.

(iii)$\Rightarrow$(ii) is trivial. It remains to prove (ii)$\Rightarrow$(iii): Let $f\in I_l(S)\setminus\{0\}$. Then there exist $n\in\mathbb{N}$ and $s_1,\ldots, s_n,t_1,\ldots, t_n\in S$ such that $f=\lambda_{t_1}^\ast \lambda_{s_1}\cdots \lambda_{t_n}^\ast \lambda_{s_n}$. By condition (ii) we have that $\lambda_{t_n}^\ast \lambda_{s_n}=\lambda_{p_n}\lambda_{q_n}^\ast$ for some $p_n,q_n\in S$. Assume that for a given $1\leq j\leq n$ there exist $p_j,q_j\in S$ such that
\[
\lambda_{t_j}^\ast \lambda_{s_j}\cdots \lambda_{t_n}^\ast \lambda_{s_n}=\lambda_{p_j}\lambda_{q_j}^\ast
\]
Then
\[
\lambda_{t_{j-1}}^\ast \lambda_{s_{j-1}}\lambda_{t_j}^\ast \lambda_{s_j}\cdots \lambda_{t_n}^\ast \lambda_{s_n}=\lambda_{t_{j-1}}^\ast \lambda_{s_{j-1}}\lambda_{p_j}\lambda_{q_j}^\ast
\]
Now by condition (ii), there exist $p,q\in S$ such that
\[
\lambda_{t_{j-1}}^\ast \lambda_{s_{j-1}}\lambda_{p_j}=\lambda_{t_{j-1}}^\ast \lambda_{s_{j-1}p_j}=\lambda_p\lambda_q^\ast
\]
Setting $p_{j-1}=p$ and $q_{j-1}=qq_j$, we get
\[
\lambda_{t_{j-1}}^\ast \lambda_{s_{j-1}}\lambda_{p_j}\lambda_{q_j}^\ast=\lambda_{p_{j-1}}\lambda_{q_{j-1}}^\ast
\]
By induction on $j$,
\[
\lambda_{t_1}^\ast \lambda_{s_1}\cdots \lambda_{t_n}^\ast \lambda_{s_n}=\lambda_{p_1}\lambda_{q_1}^\ast
\]
This shows that any $f\in I_l(S)\setminus\{0\}$ is on the form $\lambda_p\lambda_q^\ast$ for some $p,q\in S$.
\end{proof}

\begin{corollary}
Suppose $S$ satisfies Clifford's condition. Then $I_l(S)$ is $E^\ast$-unitary if and only if $S$ is cancellative
\end{corollary}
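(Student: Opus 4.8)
The plan is to treat the two implications of the biconditional separately. One direction is free: Corollary \ref{cor:eastunitarycancellative} already shows that if $I_l(S)$ is $E^\ast$-unitary then $S$ is cancellative, and this does not even require Clifford's condition. So the whole task is the converse --- assuming $S$ is cancellative \emph{and} satisfies Clifford's condition, prove that $I_l(S)$ is $E^\ast$-unitary.

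For the converse, I would first reduce to a convenient situation. By the observation following Definition \ref{def:eastunitary} it is enough to show: if $f\in I_l(S)$ and $e$ is a nonzero idempotent of $I_l(S)$ with $fe=e$, then $f$ is idempotent. Now Clifford's condition enters twice, once for the left-hand side and once for the element $f$. First, since $S$ satisfies Clifford's condition, Proposition \ref{prop:semigrouplattice} gives that every constructible right ideal is principal, so the nonzero idempotents of $I_l(S)$ are exactly the $i_{rS}$, $r\in S$; write $e=i_{rS}$. Second, since $fe=e\neq 0$ we have $f\neq 0$, so Proposition \ref{prop:cliffordsimplification} lets us write $f=\lambda_p\lambda_q^\ast$ for some $p,q\in S$.

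The core is then a direct computation with partial bijections. Unwinding $fe=e=i_{rS}$ in $I(S)$ shows that $rS\subseteq\dom f$ and that $f$ restricts to the identity on $rS$; in particular $r\in\dom f$ and $f(r)=r$. On the other hand $\dom(\lambda_p\lambda_q^\ast)=qS$ and $\lambda_p\lambda_q^\ast(qs)=ps$ for all $s\in S$. Since $r\in qS$, write $r=qs_0$; then $qs_0=r=f(r)=ps_0$, and here --- and only here --- the right cancellativity of $S$ yields $p=q$. Hence $f=\lambda_p\lambda_p^\ast=i_{pS}$ is idempotent, which is what we wanted.

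I do not expect a genuine obstacle: the role of Clifford's condition is precisely to collapse both ends of the equation $fe=e$ onto the generating family $\{\lambda_s,\lambda_s^\ast\}$ via Propositions \ref{prop:semigrouplattice} and \ref{prop:cliffordsimplification}, after which a single cancellation finishes things. The only mildly delicate points are the initial reduction to an idempotent $e$ and keeping the domains straight when composing in $I(S)$; if one prefers, Corollary \ref{cor:liftrelation} can replace the explicit domain bookkeeping, since from $fe=e$ one extracts $f\lambda_r=\lambda_r$ and then reads off $f(r)=r$ from $f\lambda_r=\lambda_{f(r)}$.
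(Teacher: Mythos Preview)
Your proof is correct and follows essentially the same route as the paper: one direction is Corollary~\ref{cor:eastunitarycancellative}, and for the converse you write $f=\lambda_p\lambda_q^\ast$ via Proposition~\ref{prop:cliffordsimplification}, evaluate at a point of the domain, and finish with right cancellation. The paper does not bother invoking Proposition~\ref{prop:semigrouplattice} to normalize the idempotent to $i_{rS}$ --- it simply picks any $r\in X=\dom(e)$ and works from $f\lambda_r=\lambda_r$ --- but your extra step is harmless.
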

\begin{proof}
One implication was proved in Corollary \ref{cor:eastunitarycancellative}. Suppose $S$ is cancellative, and that $fi_X=i_X$ for some nonzero $f\in I_l(S)$ and some nonempty $X\in J(S)$. Then $f\lambda_r=\lambda_r$ for any $r\in X$ since $X$ is a right ideal. Write $f=\lambda_s\lambda_t^\ast$ with $s,t\in S$. Then $\lambda_t^\ast \lambda_r=\lambda_s^\ast \lambda_r$. Let $p\in\dom{\lambda_t^\ast\lambda_r}$ and define $q=\lambda_t^\ast\lambda_r(p)=\lambda_s^\ast \lambda_r(p)$. Then $tq=sq=rp$. By right cancellativity this gives $t=s$, so $f$ is idempotent.
\end{proof}

\begin{corollary}\label{cor:cliffordhisomorphism}
If $S$ is cancellative and satisfies Clifford's condition, $h:C^\ast_{r,0}(I_l(S))\to C^\ast_r(S)$ is an isomorphism.
\end{corollary}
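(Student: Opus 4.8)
The plan is to simply combine the three results immediately preceding this corollary. The statement follows by checking that both hypotheses of Theorem \ref{thm:ucondisomcstar} are met.

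First I would verify that $I_l(S)$ is $E^\ast$-unitary. Since $S$ satisfies Clifford's condition, the corollary immediately preceding this one applies: under Clifford's condition, $I_l(S)$ is $E^\ast$-unitary if and only if $S$ is cancellative. As $S$ is assumed cancellative, we conclude that $I_l(S)$ is $E^\ast$-unitary.

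Next I would verify that $J(S)$ is independent. This is a direct consequence of Proposition \ref{prop:semigrouplattice}: condition (i) there (that $S$ satisfies Clifford's condition) implies that $J(S)$ is independent (indeed it implies the stronger conclusion $J(S)\cup\{\varnothing\}=\{sS:s\in S\}\cup\{\varnothing\}$, from which independence follows via Lemma \ref{lem:equalunion}). Note that this step does not require cancellativity.

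Having established both that $I_l(S)$ is $E^\ast$-unitary and that $J(S)$ is independent, I would then invoke Theorem \ref{thm:ucondisomcstar} to conclude that $h:C^\ast_{r,0}(I_l(S))\to C^\ast_r(S)$ is an isomorphism. There is no real obstacle here; the work has all been done in the preceding results, and the only thing to be careful about is matching up the hypotheses correctly (in particular that cancellativity is exactly what upgrades the Clifford-condition consequence ``$E^\ast$-unitary $\iff$ cancellative'' into the assertion that $I_l(S)$ is $E^\ast$-unitary).
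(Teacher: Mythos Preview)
Your proposal is correct and follows exactly the same route as the paper's proof: invoke the preceding corollary to get that $I_l(S)$ is $E^\ast$-unitary, use Proposition \ref{prop:semigrouplattice} to get that $J(S)$ is independent, and then apply Theorem \ref{thm:ucondisomcstar}.
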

\begin{proof}
By the previous corollary, $I_l(S)$ is $E^\ast$-unitary. By Proposition \ref{prop:semigrouplattice} $J(S)$ is independent, so Theorem \ref{thm:ucondisomcstar} implies that $h$ is an isomorphism.
\end{proof}

Any semigroup that is the positive cone in a quasilattice ordered group satisfies these conditions. Note however that a semigroup satisfying Clifford's condition is allowed to have nontrivial invertible elements. For instance $(\mathbb{Z}\times\mathbb{Z}^+,+)$ satisfies Clifford's condition, but it is not algebraically ordered.
\subsection{Li's constructions of full $C^\ast$-algebras for a left cancellative semigroup}

In \cite{li11}, Li defines the full $C^\ast$-algebra $C^\ast(S)$ of a left cancellative semigroup $S$. The construction is as follows: $C^\ast(S)$ is the universal $C^\ast$-algebra generated by isometries $\{v_s:s\in S\}$ and projections $\{e_X:X\in J(S)^0\}$ such that for all $s,t\in S$ and $X,Y\in J(S)^0$,
\begin{align*}
v_{st}=v_sv_t\qquad v_se_Xv_s ^\ast=e_{sX}\\
e_S=1 \qquad e_\varnothing=0 \qquad e_{X\cap Y}=e_Xe_Y
\end{align*}

Li also defines a $C^\ast$-algebra $C^\ast_s(S)$ when $S$ embeds into a group $G$: $C^\ast_s(S)$ is the universal $C^\ast$-algebra generated by isometries $\{v_s:s\in S\}$ and projections $\{e_X:X\in J(S)^0\}$ such that for all $s,t\in S$,
\begin{align*}
v_{st}=v_sv_t\\
e_\varnothing = 0
\end{align*}
and whenever $s_1,\ldots,s_n,t_1,\ldots,t_n\in S$ satisfy $t_1^{-1}s_1\cdots t_n^{-1}s_n=1$ in $G$, then
\[
v_{t_1}^\ast v_{s_1}\cdots v_{t_n}^\ast v_{s_n}=e_X
\]
where $X=t_1^{-1}s_1\cdots t_n^{-1}s_nS$. Li then shows that $\{v_s:s\in S\}\subset C^\ast_s(S)$ and  $\{e_X:X\in J(S)^0\}\subset C^\ast_s(S)$ satisfy the relations defining $C^\ast(S)$, so there exists a surjective $\ast$-homomorphism $\pi_s:C^\ast(S)\to C^\ast_s(S)$ that sends $v_s\in C^\ast(S)$ to $v_s\in C^\ast_s(S)$. The universal property of $C^\ast_s(S)$ also gives a canonical $\ast$-homomorphism $C^\ast_s(S)\to C^\ast_r(S)$ that sends $v_s$ to $V_s$ for all $s\in S$. Moreover, we have:

\begin{proposition}\label{prop:gisomorphism2}
Suppose $S$ embeds into a group $G$. There is a $\ast$-isomorphism $\kappa:C^\ast_s(S)\to C^\ast_0(I_l(S))$ such that $\kappa(v_s)=\lambda_s$ for each $s\in S$.
\end{proposition}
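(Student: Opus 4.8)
The plan is to obtain $\kappa$ from the universal property of $C^\ast_s(S)$, to obtain a candidate inverse $\theta$ from the universal property of $C^\ast_0(I_l(S))$, and then to check that the two composites fix a generating set.

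To build $\kappa$, I would check that the isometries $\{\lambda_s\}_{s\in S}$ and the projections $\{i_X\}_{X\in J(S)^0}$ of $C^\ast_0(I_l(S))$ (with the convention $i_{\varnothing}=0$) satisfy the defining relations of $C^\ast_s(S)$. The relations $\lambda_{st}=\lambda_s\lambda_t$ and $i_{\varnothing}=0$ are immediate, and $\lambda_s^\ast\lambda_s=i_S$ is the unit, so the $\lambda_s$ are indeed isometries. For the remaining family, fix $s_1,\dots,s_n,t_1,\dots,t_n\in S$ with $t_1^{-1}s_1\cdots t_n^{-1}s_n=1$ in $G$, put $X=t_1^{-1}s_1\cdots t_n^{-1}s_nS$, and set $f=\lambda_{t_1}^\ast\lambda_{s_1}\cdots\lambda_{t_n}^\ast\lambda_{s_n}\in I_l(S)^0$; I must show $f=i_X$. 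If $f=0$, then the partial bijection $f$ has empty range, and by the covariance relations for $V(S)$ this range is exactly $t_1^{-1}s_1\cdots t_n^{-1}s_nS=X$, so $X=\varnothing$ and $i_X=0=f$. If $f\neq 0$, then by (the proof of) Proposition~\ref{prop:groupstrongeastunitary} the assignment $\lambda_{t_1}^\ast\lambda_{s_1}\cdots\lambda_{t_n}^\ast\lambda_{s_n}\mapsto t_1^{-1}s_1\cdots t_n^{-1}s_n$ is an idempotent pure grading $\varphi$ of $I_l(S)$, so $\varphi(f)=1_G$ forces $f$ to be idempotent, say $f=i_Y$ with $Y\in J(S)$; since $Y=\ran f=X$ by the covariance relations, $f=i_X$. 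Thus the universal property of $C^\ast_s(S)$ gives a $\ast$-homomorphism $\kappa$ with $\kappa(v_s)=\lambda_s$ and $\kappa(e_X)=i_X$, and it is surjective because $\{\lambda_s\}$ generates $I_l(S)$ and hence $C^\ast_0(I_l(S))$.

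For the inverse, I would define $\sigma$ on products of the $\lambda_s$ and $\lambda_s^\ast$ by $\sigma(\lambda_{t_1}^\ast\lambda_{s_1}\cdots\lambda_{t_n}^\ast\lambda_{s_n})=v_{t_1}^\ast v_{s_1}\cdots v_{t_n}^\ast v_{s_n}$ and $\sigma(0)=0$; this is evidently multiplicative and $\ast$-preserving on such words, so the crux is to see that $\sigma(w)$ depends only on the element of $I_l(S)^0$ that the word $w$ represents. The key device is that for \emph{every} word $w=\lambda_{t_1}^\ast\lambda_{s_1}\cdots\lambda_{t_n}^\ast\lambda_{s_n}$ the string of indices attached to $ww^\ast$ has product $1_G$, so the defining relation of $C^\ast_s(S)$ applies to $ww^\ast$ and yields $\sigma(w)\sigma(w)^\ast=\sigma(ww^\ast)=e_X$, where $X$ is the constructible right ideal read off from $ww^\ast$; by the covariance relations $X$ is the range of the partial bijection $ww^\ast$. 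Hence if $w$ represents $0$ then $X=\varnothing$, so $\sigma(w)\sigma(w)^\ast=e_{\varnothing}=0$ and $\sigma(w)=0$. And if two words $w_1,w_2$ represent the same nonzero $f$, then Lemma~\ref{lem:subsemigroupvofs} shows the products of the index strings attached to $w_1$ and to $w_2$ agree in $G$; consequently, for all $i,j\in\{1,2\}$, the index string attached to $w_i^\ast w_j$ has product $1_G$, and since $w_i^\ast w_j$ represents the idempotent $f^\ast f=i_{\dom f}$, whose range is $\dom f$, the defining relation gives $\sigma(w_i)^\ast\sigma(w_j)=e_{\dom f}$. Therefore $(\sigma(w_1)-\sigma(w_2))^\ast(\sigma(w_1)-\sigma(w_2))=e_{\dom f}-e_{\dom f}-e_{\dom f}+e_{\dom f}=0$, so $\sigma(w_1)=\sigma(w_2)$. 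Thus $\sigma$ descends to a $0$-homomorphism $I_l(S)^0\to C^\ast_s(S)$ whose image is a subsemigroup of $\Piso(C^\ast_s(S))$ with commuting initial and final projections ($\sigma(f)\sigma(f)^\ast\sigma(f)=\sigma(ff^\ast f)=\sigma(f)$, and the projections $\sigma(i_X)$ commute because the $i_X$ do), so the universal property of $C^\ast(I_l(S))$ gives a $\ast$-homomorphism which kills $\mathbb{C}0$ and hence descends to $\theta\colon C^\ast_0(I_l(S))\to C^\ast_s(S)$ with $\theta(\lambda_s)=v_s$.

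To conclude, $\theta\circ\kappa$ fixes every $v_s$, and the $v_s$ generate $C^\ast_s(S)$ since each $e_X$ equals $ww^\ast$ for a word $w$ in the $v_s$ representing $i_X$, hence lies in the subalgebra they generate; so $\theta\circ\kappa=\mathrm{id}$. Symmetrically $\kappa\circ\theta$ fixes every $\lambda_s$, so $\kappa\circ\theta=\mathrm{id}$, and $\kappa$ is a $\ast$-isomorphism. The step I expect to be the real obstacle is the well-definedness of $\sigma$: it forces one to combine Lemma~\ref{lem:subsemigroupvofs} (which is what makes the rearranged words $w_i^\ast w_j$ fall under the defining relation of $C^\ast_s(S)$) with the covariance identification of the constructible right ideal read off from such a word with $\dom f$, so that all four cross terms $\sigma(w_i)^\ast\sigma(w_j)$ collapse to the single projection $e_{\dom f}$.
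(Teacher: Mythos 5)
Your argument is correct, and while the construction of $\kappa$ itself is the same as the paper's (check Li's relations for $\{\lambda_s\}$ and $\{i_X\}$, using the idempotent pure grading from Proposition \ref{prop:groupstrongeastunitary} to handle the word relation), the injectivity half takes a genuinely different route. The paper identifies $C^\ast_s(S)$ with $C^\ast_0(\mathcal{V}'(S))$, where $\mathcal{V}'(S)$ is the inverse semigroup of words $v_{t_1}^\ast v_{s_1}\cdots v_{t_n}^\ast v_{s_n}$ inside $C^\ast_s(S)$, thereby reducing the problem to showing that the semigroup surjection $\mathcal{V}'(S)\to I_l(S)$ is an isomorphism; this is then settled by Lemma \ref{lem:gisomid}, checking that preimages of idempotents are idempotents (if $\kappa(v)$ is idempotent, the group word is $1_G$, so Li's relation forces $v$ itself to be idempotent). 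You instead build an explicit $\ast$-inverse $\theta$ at the $C^\ast$-level, and the burden shifts to the well-definedness of the word-level map $\sigma$, which you settle with the cross-term computation $(\sigma(w_1)-\sigma(w_2))^\ast(\sigma(w_1)-\sigma(w_2))=0$. Both arguments consume the same inputs --- Lemma \ref{lem:subsemigroupvofs} to see that the relevant group words collapse to $1_G$, Li's defining relation, and the identification of the ideal read off a word with the range of the corresponding partial bijection --- but yours bypasses the identification $C^\ast_s(S)\simeq C^\ast_0(\mathcal{V}'(S))$, Lemma \ref{lem:gisomid}, and the appeal to Li's Lemma 2.8, at the cost of a more hands-on verification. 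Two small points you should make explicit: the words $w_i^\ast w_j$ and $ww^\ast$ are not literally in the alternating form $v_{q_1}^\ast v_{p_1}\cdots v_{q_m}^\ast v_{p_m}$ required by Li's relation, so one must insert copies of $v_1=1$ (respectively $\lambda_1=\mathrm{id}$), which changes neither the group word nor the associated ideal; and the claim that the $v_s$ alone generate $C^\ast_s(S)$ (needed for $\theta\circ\kappa=\mathrm{id}$) deserves the one-line justification you sketch, namely $e_X=uu^\ast$ for a word $u$ with $\ran(\kappa(u))=X$.
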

\begin{proof}
The existence of a surjective $\ast$-homomorphism $\kappa:C^\ast_s(S)\to C^\ast_0(I_l(S))$ follows from the universality of $C^\ast_s(S)$. $C^\ast(I_l(S))$ is generated by $\{\lambda_s:s\in S\}$ and $\{i_X:X\in J(S)\}$, and these satisfy the given relations when projected down to $C^\ast_0(I_l(S))$. In particular, if $s_1,\ldots,s_n,t_1,\ldots,t_n\in S$ satisfy $t_1^{-1}s_1\cdots t_n^{-1}s_n=1$ in $G$, then by the proof of Proposition \ref{prop:groupstrongeastunitary}, $f:=\lambda_{t_1}^\ast \lambda_{s_1}\cdots \lambda_{t_n}^\ast \lambda_{s_n}$ is idempotent. So $f=ff^\ast=i_X$ with $X=t_1^{-1}s_1\cdots t_n^{-1}s_nS$.

Let $\mathcal{V}'(S)$ be the subset of $C^\ast_s(S)$ given by
\[
\{v_{t_1}^\ast v_{s_1}\cdots v_{t_n}^\ast v_{s_n}:n\in\mathbb{N},t_1,\ldots, t_n,s_1,\ldots, s_n\in S\}
\]
Li's relations guarantee that $\mathcal{V}'(S)$ is actually an inverse semigroup. Using Lemma 2.8 of \cite{li11} (and mapping down to $C^\ast_s(S)$), we get that for any $v\in \mathcal{V}'(S)$, $v^\ast v=e_X$ and $vv^\ast=e_Y$ for some $X,Y\in J(S)$, so $v$ is a partial isometry. Moreover, any $v,w\in \mathcal{V}'(S)$ have commuting initial and final projections.

Comparing the universal properties of $C^\ast_s(S)$ and $C^\ast_0(\mathcal{V}'(S))$ gives that these two $C^\ast$-algebras are canonically isomorphic. Hence $\kappa$ is an isomorphism if and only if its restriction to $\mathcal{V}'(S)$ gives a semigroup isomorphism $\mathcal{V}'(S)\to I_l(S)$ (note that the restriction of $\kappa$ to $\mathcal{V}'(S)$ is automatically a surjective semigroup homomorphism onto $I_l(S)$). Let $J'(S)$ be the semilattice of idempotents in $\mathcal{V}'(S)$. For any $v\in J'(S)$, $v^\ast v=v$, so $v=e_X$ for some $X\in J(S)$, i.e. $J'(S)=\{e_X:X\in J(S)\}$. Hence $\kappa$ restricts to an isomorphism $J'(S)\to J(S)$ since it is injective on this set.

Let $v=v_{t_1}^\ast v_{s_1}\cdots v_{t_n}^\ast v_{s_n}\in \mathcal{V}'(S)$. Suppose $\kappa(v)=\lambda_{t_1}^\ast \lambda_{s_1}\cdots \lambda_{t_n}^\ast \lambda_{s_n}$ is idempotent. Then $t_1^{-1}s_1\cdots t_n^{-1}s_n=1$ in $G$. This implies that $v$ is idempotent. Thus $\kappa^{-1}(J(S))=J'(S)$, so $\kappa|_{\mathcal{V}'(S)}$ is an isomorphism by Lemma \ref{lem:gisomid}.
\end{proof}

\begin{proposition}\label{prop:gisomorphism}
Let $S$ be any left cancellative semigroup. There is a surjective $\ast$-homomorphism $\eta:C^\ast(S)\to C^\ast_0(I_l(S))$ such that $\eta(v_s)=\lambda_s$ for each $s\in S$. If $S$ satisfies Clifford's condition, then $\eta$ is an isomorphism.
\end{proposition}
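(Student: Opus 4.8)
The plan is to treat the two assertions separately: constructing $\eta$ is a routine application of universality, whereas the isomorphism claim under Clifford's condition requires producing an explicit inverse.

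For the first assertion, I would work inside $C^\ast_0(I_l(S))$, where each $\lambda_s$ is an isometry (because $\lambda_s^\ast\lambda_s=i_S=1$) and each $i_X$, $X\in J(S)^0$, is a projection with $i_\varnothing=0_{I_l(S)}$ killed in the quotient. One then checks that these elements satisfy Li's defining relations for $C^\ast(S)$: $\lambda_{st}=\lambda_s\lambda_t$ is immediate; $\lambda_si_X\lambda_s^\ast=i_{sX}$ is the covariance relation, read off from \eqref{eq:covariance} through the isomorphism $\omega$ (or checked directly on partial bijections); $i_S=1$ since $i_S$ is the identity of $I_l(S)$; and $i_{X\cap Y}=i_Xi_Y$ since by Lemma \ref{lem:simplifiedjs} the idempotent semilattice of $I_l(S)$ is $J(S)$ with product given by intersection. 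The universal property of $C^\ast(S)$ then furnishes $\eta\colon C^\ast(S)\to C^\ast_0(I_l(S))$ with $\eta(v_s)=\lambda_s$ and $\eta(e_X)=i_X$, and $\eta$ is surjective because the $\lambda_s$ generate $I_l(S)$ and hence $C^\ast_0(I_l(S))$.

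Now suppose $S$ satisfies Clifford's condition. Then Proposition \ref{prop:semigrouplattice} gives $J(S)\cup\{\varnothing\}=\{sS:s\in S\}\cup\{\varnothing\}$, so every $e_X$ equals some $e_{sS}=v_sv_s^\ast$ and $C^\ast(S)$ is already generated by $\{v_s\}$; and Proposition \ref{prop:cliffordsimplification} gives $I_l(S)\setminus\{0\}=\{\lambda_p\lambda_q^\ast:p,q\in S\}$. I would then try to define a map $\zeta\colon I_l(S)\to C^\ast(S)$ by $\zeta(0)=0$ and $\zeta(\lambda_p\lambda_q^\ast)=v_pv_q^\ast$, show it is a $0$-homomorphism onto an inverse subsemigroup of partial isometries of $C^\ast(S)$ (each $v_pv_q^\ast$ is a partial isometry with initial projection $e_{qS}$ and final projection $e_{pS}$, and all such projections commute), invoke the universal property of $C^\ast(I_l(S))$ to get a $\ast$-homomorphism $C^\ast(I_l(S))\to C^\ast(S)$ sending $0_{I_l(S)}$ to $0$, and push it down to $\theta\colon C^\ast_0(I_l(S))\to C^\ast(S)$ with $\theta(\lambda_p\lambda_q^\ast)=v_pv_q^\ast$. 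Once $\zeta$ is in hand, $\theta\circ\eta$ fixes each $v_s=v_sv_1^\ast$ while $\eta\circ\theta$ fixes each $\lambda_p\lambda_q^\ast$, so both composites are the identity on generating sets and $\eta$ is an isomorphism.

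The heart of the matter, and the step I expect to be the main obstacle, is the well-definedness and multiplicativity of $\zeta$. For well-definedness: if $\lambda_p\lambda_q^\ast=\lambda_{p'}\lambda_{q'}^\ast$ in $I_l(S)$ then, comparing the domains and the actions of these partial bijections, $p'=pa$ and $q'=qa$ for some invertible $a\in S$; since $v_a$ is then unitary in $C^\ast(S)$ (from $v_av_{a^{-1}}=v_1=v_{a^{-1}}v_a$ one gets $v_a^\ast=v_{a^{-1}}$), we obtain $v_{p'}v_{q'}^\ast=v_pv_av_a^\ast v_q^\ast=v_pv_q^\ast$. For multiplicativity one reduces, via $\lambda_p\lambda_q^\ast\,\lambda_{p'}\lambda_{q'}^\ast=\lambda_p(\lambda_q^\ast\lambda_{p'})\lambda_{q'}^\ast$, to controlling $\lambda_q^\ast\lambda_{p'}$. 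If $qS\cap p'S=\varnothing$ then $\lambda_q^\ast\lambda_{p'}=0$, while in $C^\ast(S)$ the relations $e_{qS}e_{p'S}=e_{qS\cap p'S}=e_\varnothing=0$ together with $v_q^\ast v_q=v_{p'}^\ast v_{p'}=1$ force $v_q^\ast v_{p'}=0$, so both sides vanish. Otherwise Proposition \ref{prop:cliffordsimplification} (with $rS=qS\cap p'S$) expresses $\lambda_q^\ast\lambda_{p'}=\lambda_a\lambda_b^\ast$, and the key observation is that the computation proving this uses only the identities $\lambda_{st}=\lambda_s\lambda_t$, $\lambda_s^\ast\lambda_s=1$, and $i_{qS}i_{p'S}=i_{rS}$, each of which has an exact counterpart among the $v_s$ and $e_X$ in $C^\ast(S)$ (the last being $e_{qS}e_{p'S}=e_{rS}$). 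Running the identical manipulation in $C^\ast(S)$ yields $v_q^\ast v_{p'}=v_av_b^\ast$ for the same $a,b$, after which both $\zeta(\lambda_p\lambda_q^\ast\lambda_{p'}\lambda_{q'}^\ast)$ and $\zeta(\lambda_p\lambda_q^\ast)\zeta(\lambda_{p'}\lambda_{q'}^\ast)$ collapse to $v_{pa}v_{q'b}^\ast$. Making this transport of relations precise, and checking along the way that $\zeta$ intertwines the $\ast$-operations, is where essentially all the work lies.
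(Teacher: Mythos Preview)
Your argument is correct, but it takes a different route from the paper's. The paper does not build an explicit inverse $\theta$; instead it introduces the inverse semigroup
\[
\mathcal{V}(S)=\{v_{t_1}^\ast v_{s_1}\cdots v_{t_n}^\ast v_{s_n}:n\in\mathbb{N},\,s_i,t_i\in S\}\subset C^\ast(S),
\]
notes (via universal properties) that $C^\ast(S)\cong C^\ast_0(\mathcal{V}(S))$, and then proves that the restricted semigroup map $\eta|_{\mathcal{V}(S)}:\mathcal{V}(S)\to I_l(S)$ is an isomorphism by appealing to Lemma~\ref{lem:gisomid}. That lemma reduces the question to checking $\eta^{-1}(J(S))\subset J''(S)$, which is done by first showing (exactly as in Proposition~\ref{prop:cliffordsimplification}) that every nonzero $v\in\mathcal{V}(S)$ has the form $v_pv_q^\ast$, and then observing that if $\eta(v)=\lambda_p\lambda_q^\ast$ is idempotent then $p=q$ by Lemma~\ref{lem:sequalst}, whence $v=v_pv_p^\ast$ is idempotent too.

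Your approach trades this abstract injectivity criterion for a hand-built inverse $\zeta$; the cost is the well-definedness check (where your ``$p'=pa$, $q'=qa$ with $a$ invertible'' step is correct and is exactly what is needed) and the multiplicativity check, while the benefit is that you never need the identification $C^\ast(S)\cong C^\ast_0(\mathcal{V}(S))$. Both proofs rest on the same core computation: under Clifford's condition the rewriting $\lambda_t^\ast\lambda_s=\lambda_p\lambda_q^\ast$ from Proposition~\ref{prop:cliffordsimplification} uses only relations that transport verbatim to $v_t^\ast v_s=v_pv_q^\ast$ in $C^\ast(S)$.
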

\begin{proof}
The existence of $\eta$ follows as before from the universality of $C^\ast(S)$. Define $\mathcal{V}(S)\subset C^\ast(S)$ to be
\[
\{v_{t_1}^\ast v_{s_1}\cdots v_{t_n}^\ast v_{s_n}:n\in\mathbb{N},t_1,\ldots, t_n,s_1,\ldots, s_n\in S\}
\]
As in the previous proposition, $\mathcal{V}(S)$ is an inverse semigroup and it is sufficient to show that the restriction of $\eta$ to $\mathcal{V}(S)$ is a semigroup isomorphism onto $I_l(S)$. Let $J''(S)=\{e_X:X\in J(S)\}$. Then $J''(S)$ is the semilattice of idempotents in $\mathcal{V}(S)$ and $\eta|_{J''(S)}$ is an isomorphism onto $J(S)$.

First we show that $\mathcal{V}(S)\setminus\{0\}=\{v_pv_q^\ast :p,q\in S\}$. The proof is almost identical to that in Proposition \ref{prop:cliffordsimplification}, and we will only show that for any $s,t\in S$ with $v_t^\ast v_s\neq 0$, there are $p,q\in S$ with $v_t^\ast v_s=v_p v_q^\ast$. If $v_t^\ast v_s\neq 0$, then $v_tv_t^\ast v_sv_s^\ast\neq 0$, so since $\eta|_{J''(S)}$ is an isomorphism onto $J(S)$, and since $S$ satisfies Clifford's condition, there is some $r\in S$ with $sS\cap tS=rS$ and
\begin{equation}\label{eq:nicacovariance}
v_tv_t^\ast v_sv_s^\ast=v_rv_r^\ast
\end{equation}
Since $r\in sS\cap tS$, there are $p,q\in S$ such that $sq=r$ and $tp=r$. Then $v_sv_q=v_r$, so $v_q=v_s^\ast v_r$. Similarly, $v_p=v_t^\ast v_r$. Now by multiplying equation \eqref{eq:nicacovariance} on the left with $v_t^\ast$ and on the right with $v_s$, we get that $v_t^\ast v_s=v_t^\ast v_rv_r^\ast v_s=v_pv_q^\ast$.

Consider $v\in \mathcal{V}(S)\setminus\{0\}$, and suppose $\eta(v)$ is idempotent. There are $p,q\in S$ with $v=v_p v_q^\ast$. Now $\eta(v)=\lambda_p \lambda_q^\ast$, so $p=q$ by Lemma \ref{lem:sequalst}. Thus $v=v_pv_q^\ast=v_pv_p^\ast$, which is idempotent. It follows from Lemma \ref{lem:gisomid} that $\eta|_{\mathcal{V}(S)}$ is an isomorphism.
\end{proof}

It is now clear that the canonical map $C^\ast(S)\to C^\ast_r(S)$ factors as
\[
\begin{array}{ccccccc}
C^\ast(S) &\xrightarrow{\eta}& C^\ast_0(I_l(S)) & \xrightarrow{\Lambda_0} & C^\ast_{r,0}(I_l(S)) &\xrightarrow{h} &C^\ast_r(S)
\end{array}
\]
When $S$ embeds into a group we get the following factorization:
\[
\begin{array}{ccccccccc}
C^\ast(S) &\xrightarrow{\pi_s}& C^\ast_s(S) & \xrightarrow[\simeq]{\kappa}& C^\ast_0(I_l(S)) & \xrightarrow{\Lambda_0} & C^\ast_{r,0}(I_l(S)) &\xrightarrow{h} &C^\ast_r(S)
\end{array}
\]
Note that in this case $\eta=\kappa\circ\pi_s$, so $\pi_s$ is an isomorphism if and only if $\eta$ is an isomorphism.

Li asks when a semigroup homomorphism $\phi:S\to R$ of left cancellative semigroups induces a $\ast$-homomorphism $C^\ast(S)\to C^\ast(R)$ by the formula $v_s\mapsto v_{\phi(s)}$. We can give a partial answer. It induces a $\ast$-homomorphism $C^\ast_0(I_l(S))\to C^\ast_0(I_l(R))$ given by $\lambda_s\mapsto \lambda_{\phi(s)}$ if and only if $\phi$ extends to a $0$-homomorphism $I_l(S)^0\to I_l(R)^0$. Of course determining when this is the case may not be easy. See Corollary \ref{cor:functorial} for a result in this direction when $S$ is left reversible and $R$ is a group.

$C^\ast(S)$ has the nice feature that it can be described as a crossed product by endomorphisms (see Lemma 2.14 of \cite{li11}). Li also shows that $C^\ast(S)$ generalizes Nica's $C^\ast$-algebras for quasilattice ordered groups as well as the Toeplitz algebras associated with rings of integers \cite{cuntz_deninger_laca11}. Nica proved in \cite{nica94} that his $C^\ast$-algebra for the quasilattice ordered group $(G,S)$ can be constructed as a $C^\ast$-algebra of the Toeplitz inverse semigroup $\mathcal{T}(G,S)$. This can be explained by the next lemma as well as Proposition \ref{prop:gisomorphism} above. For each $g\in G$, define
\[
\beta_g:\{s\in S:gs\in S\}\to \{s\in S:g^{-1}s\in S\},\qquad \beta_g(s)=gs
\]
$\mathcal{T}(G,S)$ is then defined to be the inverse subsemigroup of $I(S)$ generated by $\{\beta_g\}_{g\in G}$.

\begin{lemma}\label{lem:quasilatticesemigroup}
Let $(G,S)$ be a quasilattice ordered group. Then $I_l(S)^0=\mathcal{T}(G,S)^0$.
\end{lemma}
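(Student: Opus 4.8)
The plan is to establish the two inclusions $I_l(S)^0\subseteq\mathcal{T}(G,S)^0$ and $\mathcal{T}(G,S)^0\subseteq I_l(S)^0$, viewing everything inside the inverse semigroup $I(S)$, whose zero is the empty partial bijection.

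For the first inclusion, note that for $s\in S$ the condition $st\in S$ holds for every $t\in S$ since $S$ is closed under multiplication, so $\dom(\beta_s)=S$ and $\beta_s(t)=st=\lambda_s(t)$; thus $\lambda_s=\beta_s$ as elements of $I(S)$. Hence every generator of $I_l(S)$ is a generator of $\mathcal{T}(G,S)$, so $I_l(S)\subseteq\mathcal{T}(G,S)$ and therefore $I_l(S)^0\subseteq\mathcal{T}(G,S)^0$.

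For the reverse inclusion it suffices to show $\beta_g\in I_l(S)^0$ for every $g\in G$, since $I_l(S)^0$ is an inverse subsemigroup of $I(S)$ containing $0$ and $\mathcal{T}(G,S)$ is generated by the $\beta_g$. I would first record the elementary fact that for $p,q\in S$ the partial bijection $\lambda_p\lambda_q^\ast$ has domain $qS$ and acts by $qt\mapsto pt$; equivalently, it is the restriction of $\beta_{pq^{-1}}$ to $qS$. Now fix $g\in G$. If $\dom(\beta_g)=\varnothing$ then $\beta_g=0\in I_l(S)^0$. Otherwise pick $s_0\in\dom(\beta_g)$, so $gs_0\in S$; since also $g^{-1}(gs_0)=s_0\in S$, the element $gs_0$ is a common upper bound of $1_G$ and $g$ in the induced partial order on $G$. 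By the quasilattice condition (Definition \ref{def:quasilatticeordered}) the pair $\{1_G,g\}$ has a least common upper bound $\sigma\in S$. Put $p=\sigma$ and $q=g^{-1}\sigma$; then $q\in S$ because $\sigma\geq g$, and $pq^{-1}=g$. The crux is to verify $\dom(\beta_g)=qS$: the inclusion $qS\subseteq\dom(\beta_g)$ is immediate from $g(qt)=pt\in pS\subseteq S$, and conversely if $s\in\dom(\beta_g)$ then (as above) $gs$ is a common upper bound of $1_G$ and $g$, hence $gs\geq\sigma$, so $gs=\sigma r$ for some $r\in S$, and therefore $s=g^{-1}\sigma r=qr\in qS$. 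Combined with the fact recorded above this gives $\beta_g=\lambda_p\lambda_q^\ast\in I_l(S)$.

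Putting the two inclusions together yields $I_l(S)^0=\mathcal{T}(G,S)^0$. The only step that is not purely formal is the computation $\dom(\beta_g)=qS$, and this is exactly where the defining property of a quasilattice ordered group (existence of least upper bounds for bounded pairs) is used; it can be seen as the extension to an arbitrary $g\in G$ of Proposition \ref{prop:cliffordsimplification}, which puts every nonzero element of $I_l(S)$ in the form $\lambda_p\lambda_q^\ast$.
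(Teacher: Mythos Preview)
Your proof is correct and follows essentially the same approach as the paper's. The only cosmetic difference is that the paper computes $\dom(\beta_g^\ast)=gS\cap S=rS$ (with $r$ the least upper bound of $g$ and $1$ and $u=g^{-1}r$) and concludes $\beta_g^\ast=\lambda_u\lambda_r^\ast$, whereas you compute $\dom(\beta_g)=qS$ directly and conclude $\beta_g=\lambda_p\lambda_q^\ast$; with $p=r$ and $q=u$ these are the same identity.
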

\begin{proof}
Let $g\in S$. Then $\{s\in S:gs\in S\}=S$ and $\{s\in S:g^{-1}s\in S\}=gS$, so $\beta_g=\lambda_g$. This shows that $\mathcal{T}(G,S)$ contains $I_l(S)$.

Note that for any $g\in G$, $\beta_{g^{-1}}=\beta_g^\ast$. If $\beta_g\neq 0$, $\dom\beta_g^\ast=\{s\in S:g^{-1}s\in S\}=gS\cap S$ is nonempty, so $g^{-1}s=t\in S$ for some $s,t\in S$. Then $g\leq s$ as defined in Definition \ref{def:quasilatticeordered}. Moreover, $1^{-1}s\in S$, so $1\leq s$. Thus $s$ is a common upper bound for $g$ and $1$ in $S$. Then $g$ and $1$ have a least common upper bound $r\in S$ since $(G,S)$ is quasilattice ordered.

Let $p\in gS\cap S$. Using the same arguments as we did for $s$, we get $g\leq p$ and $1\leq p$, so $r\leq p$ since $r$ was a least common upper bound for $g$ and $1$. Then $r\succeq p$, so $p\in rS$. This shows that $gS\cap S\subset rS$. However since $g\leq r$, $g^{-1}r=u$ for some $u\in S$. Then $r=gu$, so $rS=guS\cap S\subset gS\cap S$. Now
\[
\dom\beta_g^\ast=gS\cap S=rS=\dom \lambda_u\lambda_r^\ast
\]
Moreover for any $v\in gS\cap S$,
\[
\beta_g^\ast(v)=\beta_{g^{-1}}(v)=g^{-1}v=ur^{-1}v=\lambda_u\lambda_r^\ast(v)
\]
So $\beta_g^\ast=\lambda_u\lambda_r^\ast$, and $\beta_g=\lambda_r\lambda_u^\ast$. This shows that $\mathcal{T}(G,S)^0\subset I_l(S)^0$.
\end{proof}

A more detailed discussion on the relationship between $I_l(S)$ and $\mathcal{T}(G,S)$ can be found in \cite{jiang03}. Nica's construction of a $C^\ast$-algebra for $\mathcal{T}(G,S)$ uses a groupoid, but Milan explains in section 5 of \cite{milan10} why this $C^\ast$-algebra is isomorphic to $C^\ast_0(\mathcal{T}(G,S))$.
\subsection{Left reversible semigroups, left amenability and functoriality}\label{sec:leftreversible}

We still consider a left cancellative semigroup $S$ unless something else is stated. Recall that $S$ is \emph{left reversible} if for any $s,t\in S$, $sS\cap tS\neq \varnothing$. The next lemma is a slightly stronger version of Lemma 2.4.8 in \cite{lawson99}.

\begin{lemma}
$S$ is left reversible if and only if $0\notin I_l(S)$ if and only if $\varnothing\notin J(S)$.
\end{lemma}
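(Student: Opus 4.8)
The plan is to route all three conditions through the single question of whether $\varnothing$ occurs as a constructible right ideal, using the fact (recorded just after Lemma~\ref{lem:isomorphicinverse}) that the semilattice of idempotents of $I_l(S)$ is exactly $\{i_X : X\in J(S)\}$. First I would dispose of the equivalence $0\notin I_l(S)\iff\varnothing\notin J(S)$, which is purely formal: the element $0$ of $I(S)$ is the empty partial bijection, it is idempotent, and it coincides with $i_\varnothing$; hence $0\in I_l(S)$ holds if and only if $i_\varnothing$ lies among the idempotents of $I_l(S)$, i.e.\ if and only if $\varnothing\in J(S)$.

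It then remains to show that $S$ is left reversible if and only if $\varnothing\notin J(S)$. For one direction, suppose $S$ is not left reversible and pick $s,t\in S$ with $sS\cap tS=\varnothing$. Since $sS$ and $tS$ both lie in $J(S)$ and $J(S)$ is closed under intersection (by Lemma~\ref{lem:simplifiedjs} it is a $\cap$-semilattice), we get $\varnothing=sS\cap tS\in J(S)$. Contrapositively, $\varnothing\notin J(S)$ forces left reversibility.

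For the converse I would prove that left reversibility makes every element of $J(S)$ nonempty, working with the simplified description $J(S)=\{t_1^{-1}s_1\cdots t_n^{-1}s_n S : n\in\mathbb{N},\ s_i,t_i\in S\}$ from Lemma~\ref{lem:simplifiedjs}. The key step is the observation that, when $S$ is left reversible, the class of \emph{nonempty} right ideals of $S$ is stable under the operations $Y\mapsto rY$ and $Y\mapsto r^{-1}Y$: that these produce right ideals was noted earlier, that $rY$ is nonempty is trivial, and $r^{-1}Y\neq\varnothing$ because, choosing $z\in Y$ (so $zS\subseteq Y$ as $Y$ is a right ideal), left reversibility gives $u,v\in S$ with $ru=zv\in zS\subseteq Y$, whence $u\in r^{-1}Y$. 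Since $S$ itself is a nonempty right ideal and every element of $J(S)$ is obtained from $S$ by finitely many applications of these two operations (reading $t_1^{-1}s_1\cdots t_n^{-1}s_n S$ from the inside out), a straightforward induction on the length $n$ shows that each such set is nonempty, so $\varnothing\notin J(S)$.

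The main point requiring care is that in this last step one must invoke the single-chain description of $J(S)$ from Lemma~\ref{lem:simplifiedjs} rather than its original definition as a family of finite intersections: an intersection of nonempty sets need not be nonempty, so the induction really has to run along a single chain of $r\mapsto r\cdot(-)$ and $r\mapsto r^{-1}(-)$ operations applied to $S$. Everything else is routine bookkeeping with facts already in hand, namely that right ideals are preserved by these two operations and that the idempotent semilattice of $I_l(S)$ is $J(S)$.
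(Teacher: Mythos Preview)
Your proposal is correct and follows essentially the same architecture as the paper's proof: reduce to the equivalence $S$ left reversible $\iff$ $\varnothing\notin J(S)$, handle one direction via $sS\cap tS\in J(S)$, and handle the other by showing that the class of nonempty right ideals is stable under $Y\mapsto rY$ and $Y\mapsto r^{-1}Y$, then induct.

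There is one small contrast worth noting. You insist on the single-chain description of $J(S)$ from Lemma~\ref{lem:simplifiedjs} and flag the intersection form of $J(S)$ as a potential pitfall. The paper avoids this detour: it first proves the slightly stronger fact that, under left reversibility, \emph{any} two nonempty right ideals $X,Y$ meet (pick $s\in X$, $t\in Y$, then $sS\cap tS\subset X\cap Y$). This one observation simultaneously gives closure under finite intersections and, via $tt^{-1}X=X\cap tS$, closure under $t^{-1}$. So the paper's argument would work just as well with the original intersection definition of $J(S)$, and your caveat, while not wrong, is unnecessary once that stronger statement is in hand.
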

\begin{proof}
Clearly $0\notin I_l(S)$ if and only if $\varnothing\notin J(S)$. Moreover $sS\cap tS\in J(S)$ for all $s,t\in S$ so $\varnothing\notin J(S)$ implies that $S$ is left reversible. Suppose $S$ is left reversible, and let $X,Y\subset S$ be nonempty right ideals. If $s\in X$ and $t\in Y$, then $sS\subset X$ and $tS\subset Y$, so $sS\cap tS\subset X\cap Y$, and $X\cap Y\neq\varnothing$. Moreover, for any $t\in S$, $tt^{-1}X=X\cap tS$, so $t^{-1}X$ is nonempty. It follows by a simple induction argument that $\varnothing\notin J(S)$.
\end{proof}

We include a short proof of the well known fact that left amenable semigroups are left reversible. See also Proposition (1.23) in \cite{paterson88}. To be formal, a \emph{left invariant mean} on $S$ is a state $\mu$ on $\ell^\infty(S)$ such that for any $s\in S$ and $\xi\in\ell^\infty(S)$, $\mu(\xi\circ\lambda_s)=\mu(\xi)$. $S$ is \emph{left amenable} if it has a left invariant mean. Right amenability is similarly defined. It is not difficult to show that a group or an inverse semigroup is left amenable if and only if it is right amenable, so left amenable groups and inverse semigroups are often just called \emph{amenable}.

\begin{lemma}\label{lem:amenablereversible}
Let $S$ be left amenable. Then for any left invariant mean $\mu$ on $S$, $\mu(\chi_X)=1$ for all $X\in J(S)$. This implies that $S$ is left reversible.
\end{lemma}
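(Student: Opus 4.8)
The plan is to establish $\mu(\chi_X)=1$ for every $X\in J(S)$ by an induction on the structure of $X$, and then to read off left reversibility in one line.

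First I would record the two elementary identities that drive the argument. For any subset $X\subset S$ and any $s\in S$ one has, straight from the definitions, $\chi_X\circ\lambda_s=\chi_{s^{-1}(X)}$; and since $S$ is left cancellative, $s^{-1}(sX)=X$, so also $\chi_{sX}\circ\lambda_s=\chi_{s^{-1}(sX)}=\chi_X$. Feeding these into the left invariance $\mu(\xi\circ\lambda_s)=\mu(\xi)$ produces the two stability statements
\[
\mu\bigl(\chi_{s^{-1}(X)}\bigr)=\mu(\chi_X)\qquad\text{and}\qquad \mu(\chi_{sX})=\mu\bigl(\chi_{s^{-1}(sX)}\bigr)=\mu(\chi_X);
\]
in words, replacing $X$ by $s^{-1}(X)$ or by $sX$ does not change the value of $\mu$ on the corresponding characteristic function.

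For the induction I would use Lemma~\ref{lem:simplifiedjs}: every $X\in J(S)$ has the form $X=t_1^{-1}s_1\cdots t_n^{-1}s_nS$, that is, $X$ is obtained from $S$ by finitely many applications of the operations $Y\mapsto sY$ and $Y\mapsto t^{-1}(Y)$. Since $\chi_S$ is the unit of $\ell^\infty(S)$ and $\mu$ is a state, $\mu(\chi_S)=1$; peeling these operations off one at a time from the outside and invoking the appropriate stability statement at each step gives $\mu(\chi_X)=\mu(\chi_S)=1$.

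Finally, for left reversibility: for any $s,t\in S$ the set $sS\cap tS$ lies in $J(S)$, so $\mu(\chi_{sS\cap tS})=1$ by the above, and in particular $\chi_{sS\cap tS}\neq 0$, i.e. $sS\cap tS\neq\varnothing$ --- which is exactly left reversibility. (Equivalently, $\varnothing\in J(S)$ would force $\mu(\chi_\varnothing)=\mu(0)=0\neq 1$, contradicting the above and the preceding characterisation of left reversibility.) I do not expect a genuine obstacle here; the only points needing a little care are the bookkeeping of the alternating chain of operations that builds a general element of $J(S)$, and the observation --- which should be stated explicitly --- that left cancellativity of $S$ is precisely what gives $s^{-1}(sX)=X$ and hence the second stability statement.
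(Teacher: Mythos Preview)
Your proposal is correct and follows essentially the same route as the paper: both establish the two stability identities $\mu(\chi_{s^{-1}X})=\mu(\chi_X)$ and $\mu(\chi_{sX})=\mu(\chi_X)$ (the latter via $s^{-1}(sX)=X$ from left cancellativity), invoke Lemma~\ref{lem:simplifiedjs} to run an induction from $\mu(\chi_S)=1$, and then read off left reversibility from $\mu(\chi_\varnothing)=0$. Your write-up is in fact a bit more explicit than the paper's about the role of left cancellativity, which is a plus.
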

\begin{proof}
For convenience we set $\mu(X)=\mu(\chi_X)$ for any $X\subset S$. Note that for any $t\in S$, $\chi_X\circ\lambda_t=\chi_{t^{-1}X}$. Thus $\mu(t^{-1}X)=\mu(X)$. Since $t^{-1}tX=X$, we also have $\mu(tX)=\mu(X)$. By Lemma \ref{lem:simplifiedjs},
\[
J(S)=\{t_1^{-1}s_1\cdots t_n^{-1}s_nS:n\in\mathbb{N},s_i,t_i\in S\}
\]
So by a simple induction argument we get that $\mu(X)=\mu(S)=1$ for all $X\in J(S)$. As $\mu(\varnothing)=0$, this shows that $\varnothing\notin J(S)$.
\end{proof}

It is also well known that a cancellative left reversible semigroup embeds into a group $G$. This is one formulation of Ore's Theorem from \cite{ore31}. The proof we present below in Theorem \ref{thm:orestheorem} is basically the same as Rees' proof that can be found in vol I, p. 35 of \cite{clifford_preston61}. The reason we repeat it here is that it illustrates how $I_l(S)$ is related to $G$. See also ch. 2.4 of \cite{lawson99} where Lawson gives an account of this proof and shows that $I_l(S)$ is $E$-unitary when $S$ is left reversible and cancellative.

Suppose $0\notin I_l(S)$. Then we can construct the maximal group homomorphic image $G(I_l(S))$ of $I_l(S)$ as described in Definition \ref{def:grouphomomorphicimage}. For simplicity we write $G(S)=G(I_l(S))$. Let $\alpha_S:I_l(S)\to G(S)$ denote the quotient homomorphism. Let $\gamma_S:S\to G(S)$ be given by $\gamma_S(s)=\alpha_S(\lambda_s)$. Then $G(S)$ is generated by the cancellative semigroup $\gamma_S(S)$.

\begin{theorem}\label{thm:orestheorem}
Let $S$ be left reversible. The following conditions are equivalent
\begin{enumerate}
\item $S$ is cancellative.
\item $\gamma_S:S\to G(S)$ is injective.
\item $S$ embeds into a group.
\item $I_l(S)$ is $E$-unitary.
\end{enumerate}
\end{theorem}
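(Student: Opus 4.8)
The plan is to prove the cyclic chain (i)$\Rightarrow$(ii)$\Rightarrow$(iii)$\Rightarrow$(iv)$\Rightarrow$(i), arranged so that the only arrow carrying genuinely new content is (i)$\Rightarrow$(ii); the remaining three merely repackage results already available. For (ii)$\Rightarrow$(iii): if $\gamma_S$ is injective then it is by construction an embedding of $S$ into the group $G(S)$. For (iii)$\Rightarrow$(iv): by Proposition \ref{prop:groupstrongeastunitary} an embedding of $S$ into a group makes $I_l(S)$ strongly $E^\ast$-unitary, hence $E^\ast$-unitary; since $S$ is left reversible, $I_l(S)$ has no zero, and for an inverse semigroup without zero $E^\ast$-unitary and $E$-unitary are the same condition, so $I_l(S)$ is $E$-unitary. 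For (iv)$\Rightarrow$(i): again $I_l(S)$ has no zero because $S$ is left reversible, so $E$-unitary coincides with $E^\ast$-unitary, and Corollary \ref{cor:eastunitarycancellative} then gives that $S$ is cancellative. (The implication (iii)$\Rightarrow$(i) is also immediate, as every subsemigroup of a group is cancellative, but it is not needed to close the loop.)

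The substantive step is (i)$\Rightarrow$(ii), which is Ore's theorem read off from $I_l(S)$. Assume $S$ is cancellative and left reversible and suppose $\gamma_S(s)=\gamma_S(t)$, that is, $\alpha_S(\lambda_s)=\alpha_S(\lambda_t)$. By the description of the congruence defining $G(S)=G(I_l(S))$ in Definition \ref{def:grouphomomorphicimage}, $\lambda_s e=\lambda_t e$ for some idempotent $e\in I_l(S)$; since the idempotents of $I_l(S)$ are exactly the identity maps $i_X$ with $X\in J(S)$, we may take $e=i_X$. Left reversibility forces $X\neq\varnothing$ (otherwise $\varnothing\in J(S)$), so choose $r\in X$. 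As $X$ is a right ideal containing $r$ we have $rS\subseteq X$, hence $i_X\lambda_r=\lambda_r$. Multiplying $\lambda_s i_X=\lambda_t i_X$ on the right by $\lambda_r$ gives $\lambda_{sr}=\lambda_s\lambda_r=\lambda_t\lambda_r=\lambda_{tr}$; evaluating both sides at $1\in S$ yields $sr=tr$, and right cancellativity gives $s=t$. Thus $\gamma_S$ is injective.

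The one point needing a line of justification is the identity $i_X\lambda_r=\lambda_r$ for $r\in X$: here $\dom(i_X\lambda_r)=\lambda_r^{-1}(X)=\{u\in S: ru\in X\}=S$ because $X$ is a right ideal, and on that domain the two maps obviously agree. Beyond this I anticipate no real difficulty: the heavy lifting is already done by Proposition \ref{prop:groupstrongeastunitary} and Corollary \ref{cor:eastunitarycancellative}, and the purpose of isolating (i)$\Rightarrow$(ii) is mostly to make explicit that the classical Rees/Ore construction of a group of fractions is exactly the statement that $s\mapsto\alpha_S(\lambda_s)$ has trivial kernel once $S$ is cancellative.
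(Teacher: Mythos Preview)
Your proof is correct and follows essentially the same route as the paper's: the paper also singles out (i)$\Rightarrow$(ii) as the only substantive step and dispatches the rest via Proposition~\ref{prop:groupstrongeastunitary} and the fact that $I_l(S)$ has no zero when $S$ is left reversible. The only cosmetic differences are that the paper organizes the easy implications as (ii)$\Rightarrow$(iii), (iii)$\Rightarrow$(i), (iii)$\Leftrightarrow$(iv) rather than a cycle, and in (i)$\Rightarrow$(ii) it evaluates $\lambda_s i_X=\lambda_t i_X$ directly at $r\in X$ to get $sr=tr$, whereas you right-multiply by $\lambda_r$ and then evaluate at $1$---the same computation one step removed.
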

\begin{proof}
(ii)$\Rightarrow$(iii) and (iii)$\Rightarrow$(i) are trivial. (iii)$\Leftrightarrow$(iv) follows from Proposition \ref{prop:groupstrongeastunitary} ($I_l(S)$ is strongly $E^\ast$-unitary if and only if $S$ is group embeddable) and the fact that an inverse semigroup without $0$ is $E$-unitary if and only if it is strongly $E^\ast$-unitary.

(i)$\Rightarrow$(ii): Since the map $S\hookrightarrow I_l(S)$ is injective, we only have to prove that the homomorphism $I_l(S)\to G(S)$ is injective on the set $\{\lambda_s:s\in S\}$. Let $s,t\in S$ and suppose $\lambda_s$ and $\lambda_t$ map to the same element. Then by the definition of the congruence that was used to construct $G(S)$ there is an $X\subset J(S)$ such that $\lambda_s i_X=\lambda_t i_X$. Hence for any $r\in X$, $sr=\lambda_s(r)=\lambda_t(r)$=tr. By cancelling with $r$ we get that $s=t$.
\end{proof}

\begin{definition}
Let $P$ be semigroup. Recall that a subset $X\subset P$ is said to be \emph{left thick} if for any finite sequence $s_1,\ldots, s_n\in P$,
\[
\bigcap_{i=1}^n s_iX\neq \varnothing
\]
\end{definition}

The next proposition is related to Proposition (1.27) in \cite{paterson88}.

\begin{proposition}
Let $S$ be a subsemigroup of a group $G$ such that $S$ generates $G$. Then $S$ is left reversible if and only if $S$ is a left thick subset of $G$.
\end{proposition}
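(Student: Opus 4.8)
The plan is to prove the two implications separately; only one direction requires work. For ``$S$ left thick in $G$ $\Rightarrow$ $S$ left reversible'', I would just apply the definition of left thickness to the two-element family $\{s,t\}$ with $s,t\in S\subseteq G$, obtaining $sS\cap tS\neq\varnothing$; this is exactly left reversibility (and $sS$ is the same set whether formed inside $S$ or inside $G$, since $s\in S$).

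For the converse, assume $S$ is left reversible. The first step is to identify $G$ with $SS^{-1}=\{st^{-1}:s,t\in S\}$. Since $1\in S$ we have $1\in SS^{-1}$ and $S\subseteq SS^{-1}$, and $SS^{-1}$ is closed under inversion because $(st^{-1})^{-1}=ts^{-1}$. For closure under multiplication, given $s_1t_1^{-1}$ and $s_2t_2^{-1}$ I would apply left reversibility to the pair $t_1,s_2\in S$ to get $u,v\in S$ with $t_1u=s_2v$, whence $t_1^{-1}s_2=uv^{-1}$ and $(s_1t_1^{-1})(s_2t_2^{-1})=(s_1u)(t_2v)^{-1}\in SS^{-1}$. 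Thus $SS^{-1}$ is a subgroup of $G$ containing $S$, and as $S$ generates $G$ we conclude $G=SS^{-1}$.

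The second step is the standard remark that left reversibility gives finite common right multiples: for $a_1,\dots,a_n\in S$ the intersection $\bigcap_{i=1}^n a_iS$ is nonempty (by induction on $n$: if $wS\subseteq\bigcap_{i<n}a_iS$ with $w\in S$, then $wS\cap a_nS\neq\varnothing$ by left reversibility and contains some $w'S$ with $w'\in S$, so $w'S\subseteq\bigcap_{i=1}^n a_iS$; alternatively, $\bigcap_{i=1}^n a_iS\in J(S)$, which contains no empty set when $S$ is left reversible). Given this, let $g_1,\dots,g_n\in G$; write $g_i=a_ib_i^{-1}$ with $a_i,b_i\in S$ using $G=SS^{-1}$, pick $y\in\bigcap_{i=1}^n a_iS$, and write $y=a_if_i$ with $f_i\in S$ for each $i$. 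Then $g_i^{-1}y=b_ia_i^{-1}y=b_if_i\in S$, so $y\in g_iS$ for every $i$, and therefore $\bigcap_{i=1}^n g_iS\neq\varnothing$; hence $S$ is left thick in $G$. I expect no serious obstacle here; the only point needing care is matching the ``common right multiple'' content of left reversibility with the $SS^{-1}$ shape of the group of quotients in the multiplication-closure step.
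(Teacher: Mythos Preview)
Your proof is correct, but it follows a genuinely different route from the paper's. The paper never establishes $G=SS^{-1}$; instead it writes an arbitrary $g\in G$ as an alternating word $t_1^{-1}s_1\cdots t_n^{-1}s_n$ (possible since $S$ generates $G$), proves the identity
\[
t_1^{-1}s_1\cdots t_n^{-1}s_n S \;=\; S\cap\bigcap_{j=1}^n (t_1^{-1}s_1\cdots t_j^{-1}s_j)S
\]
relating preimages in $S$ to intersections in $G$, and then invokes the earlier lemma that $S$ is left reversible iff $\varnothing\notin J(S)$. Both directions then reduce to statements about $J(S)$.

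Your approach is more self-contained: you essentially prove the Ore--Rees fraction form $G=SS^{-1}$ along the way, which gives a very clean parametrization of group elements and makes the thickness verification a one-liner. The paper's approach, by contrast, plugs directly into the $J(S)$ machinery already developed and avoids reproving any part of Ore's theorem. Either works; yours is more elementary and portable, the paper's is more in keeping with the surrounding framework.
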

\begin{proof}
For $t\in S$ and $X\subset S$, $t^{-1}(X)=(t^{-1})X\cap S$ where $t^{-1}(X)$ is the preimage inside $S$ and $(t^{-1})X$ is defined by multiplication inside $G$. So for any $n\in\mathbb{N}$ and $s_i,t_i\in S$,
\[
t_1^{-1}s_1\cdots t_n^{-1}s_nS=S\cap \bigcap_{j=1}^n (t_1^{-1}s_1\cdots t_j^{-1}s_j)S
\]
If $S$ is left thick, this set is never empty, nor is any finite intersection of sets of this type, so $\varnothing\notin J(S)$. On the other hand, for any $g_1\ldots g_m\in G$, write $g_i=t_{i,1}^{-1}s_{i,1}\cdots t_{i,n_i}^{-1}s_{i,n_i}$ with $s_{i,j},t_{i,j}\in S$. Then
\[
\bigcap_{i=1}^m g_iS\supset S\cap\bigcap_{i=1}^m\bigcap_{j=1}^{n_i}(t_{i,1}^{-1}s_{i,1}\cdots t_{i,j}^{-1}s_{i,j})S
\]
If $\varnothing\notin J(S)$, the right hand side is nonempty, and so is the left hand side, so $S$ is a left thick subset of $G$.
\end{proof}

By a theorem of Mitchell \cite{mitchell65}, if $S'$ is a left thick subsemigroup of a of a semigroup $S$, then $S'$ is left amenable if and only if $S$ is left amenable. Hence we get:

\begin{corollary}\label{cor:amenable1}
Let $S$ be cancellative and left reversible. Then $S$ is left amenable if and only if $G(S)$ is amenable.
\end{corollary}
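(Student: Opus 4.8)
The plan is to realize $S$ as a left thick subsemigroup of the group $G(S)$ and then chain together Mitchell's theorem with the fact that a group is left amenable precisely when it is amenable.

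First I would invoke Theorem \ref{thm:orestheorem}: since $S$ is cancellative and left reversible, $\gamma_S\colon S\to G(S)$ is injective. Hence $\gamma_S(S)$ is a subsemigroup of $G(S)$ isomorphic (as a semigroup) to $S$, and, as recorded in the discussion preceding Theorem \ref{thm:orestheorem}, $\gamma_S(S)$ generates $G(S)$. Since $\gamma_S$ is an isomorphism onto its image, $\gamma_S(S)$ is again left reversible: given $\gamma_S(s),\gamma_S(t)\in\gamma_S(S)$, any $r\in sS\cap tS$ gives $\gamma_S(r)\in\gamma_S(s)\gamma_S(S)\cap\gamma_S(t)\gamma_S(S)$, so this intersection is nonempty.

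Next I would apply the preceding proposition with ambient group $G(S)$ and generating subsemigroup $\gamma_S(S)$: being left reversible and generating $G(S)$, the set $\gamma_S(S)$ is a left thick subset, hence a left thick subsemigroup, of $G(S)$. Mitchell's theorem then yields that $\gamma_S(S)$ is left amenable if and only if $G(S)$ is left amenable. To finish, left amenability of $S$ is equivalent to left amenability of $\gamma_S(S)$ because the two are isomorphic semigroups, and left amenability of the group $G(S)$ coincides with its amenability, as remarked earlier in the section. Composing these equivalences gives the statement.

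The argument presents no genuine obstacle; it is an assembly of results already proved. The points that deserve a moment's care are: that $\gamma_S$ is injective here (so that left amenability genuinely transfers both ways between $S$ and $\gamma_S(S)$), that $\gamma_S(S)$ is a subsemigroup and not merely a subset of $G(S)$ (immediate, as it is the image of a homomorphism), and that the "left thick subset" produced by the proposition is in fact a left thick \emph{subsemigroup}, which is exactly the hypothesis under which Mitchell's theorem applies.
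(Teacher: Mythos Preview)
Your proof is correct and follows essentially the same approach as the paper: apply the preceding proposition to see that the (injective) image $\gamma_S(S)$ is a left thick subsemigroup of $G(S)$, then invoke Mitchell's theorem. The paper states this in a single sentence, while you have spelled out the intermediate steps (injectivity of $\gamma_S$, left reversibility of $\gamma_S(S)$, transfer of left amenability along the isomorphism), but the argument is the same.
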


We will now show that the assumption that $S$ is right cancellative is redundant in the statement of Corollary \ref{cor:amenable1}. If $P$ is any semigroup, let $\approx$ (or $\approx_P$) be the relation on $P$ given by $s\approx t$ if there is some $r\in P$ with $sr=tr$. From vol I, p.35 of \cite{clifford_preston61} we have that if $P$ is left reversible, $\approx$ is a congruence and $P/\approx$ is a right cancellative semigroup. Proposition 1.25 of \cite{paterson88} states that $P$ is left amenable if and only if $P/\approx$ is left amenable. Note that $\approx_{I_l(S)}$ is exactly the congruence on $I_l(S)$ one takes the quotient with to create $G(S)$.

\begin{lemma}
Let $S$ be left reversible. Then $\gamma_S(S)$ is isomorphic to $S/\approx_S$
\end{lemma}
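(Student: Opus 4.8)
The plan is to construct an explicit semigroup isomorphism $\Phi\colon S/\!\approx_S\,\to\gamma_S(S)$ given by $\Phi([s]_{\approx})=\gamma_S(s)$, and to verify in turn that it is well defined, multiplicative, surjective and injective. Multiplicativity is immediate since $\lambda$ and $\alpha_S$ are homomorphisms: $\gamma_S(st)=\alpha_S(\lambda_{st})=\alpha_S(\lambda_s\lambda_t)=\alpha_S(\lambda_s)\alpha_S(\lambda_t)=\gamma_S(s)\gamma_S(t)$; and surjectivity is automatic because $\gamma_S(S)$ is by definition the range of $\gamma_S$, hence of $\Phi$.

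For well-definedness I would note that if $s\approx_S t$ then $sr=tr$ for some $r\in S$, so $\lambda_s\lambda_r=\lambda_{sr}=\lambda_{tr}=\lambda_t\lambda_r$; since $\lambda_r\in I_l(S)$, this is precisely the relation $\lambda_s\sim\lambda_t$ of Definition \ref{def:grouphomomorphicimage}, hence $\gamma_S(s)=\alpha_S(\lambda_s)=\alpha_S(\lambda_t)=\gamma_S(t)$. The converse implication is exactly injectivity of $\Phi$, and this is the only step that takes work. Suppose $\gamma_S(s)=\gamma_S(t)$, i.e.\ $\lambda_s\sim\lambda_t$ in $I_l(S)$. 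By the parenthetical in Definition \ref{def:grouphomomorphicimage} the witnessing element may be taken idempotent, so $\lambda_s\,i_X=\lambda_t\,i_X$ for some $X\in J(S)$ (recall, from the remark following Lemma \ref{lem:isomorphicinverse}, that the idempotents of $I_l(S)$ are exactly the $i_X$, $X\in J(S)$). Here left reversibility enters: by the lemma characterising left reversibility, $\varnothing\notin J(S)$, so $X\neq\varnothing$ and we may pick $r\in X$. Since $\dom\lambda_s=\dom\lambda_t=S\supseteq X=\dom i_X$, evaluating the equality $\lambda_s\,i_X=\lambda_t\,i_X$ at $r$ yields $sr=\lambda_s(r)=\lambda_t(r)=tr$, so $s\approx_S t$, as required.

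The single real obstacle is this last passage, carrying the congruence $\sim$ on $I_l(S)$ back down to the relation $\approx_S$ on $S$; it is resolved by the observation that, for left reversible $S$, the idempotent $i_X$ has nonempty domain, which is exactly what lets us ``evaluate at a point of $X$''. Everything else is routine bookkeeping with the homomorphisms $\lambda$ and $\alpha_S$. (As a byproduct, the isomorphism re-derives that $\gamma_S(S)$ is right cancellative, consistent with the cited fact that $S/\!\approx_S$ is right cancellative.)
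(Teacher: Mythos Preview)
Your proof is correct and follows essentially the same approach as the paper: both directions amount to showing $\gamma_S(s)=\gamma_S(t)$ iff $sr=tr$ for some $r\in S$, with the key step being to pick $r\in X$ from the idempotent witness $i_X$ (nonempty by left reversibility) and evaluate. The only cosmetic difference is that the paper multiplies by $\lambda_r$ and evaluates at $1$, whereas you evaluate $\lambda_s i_X=\lambda_t i_X$ at $r$ directly; these are the same computation.
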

\begin{proof}
We show that $\gamma_S(s)=\gamma_S(t)$ if and only if there is some $r\in S$ such that $sr=tr$. First, if $sr=tr$, then $\lambda_s\lambda_r=\lambda_t\lambda_r$, so $\gamma_S(s)=\gamma_S(t)$. On the other hand, if $\gamma_S(s)=\gamma_S(t)$, there is some $X\in J(S)$ such that $\lambda_s i_X=\lambda_t i_X$. Let $r\in X$. Then $\lambda_s i_X \lambda_r=\lambda_s\lambda_r=\lambda_t i_X \lambda_r=\lambda_t\lambda_r$, so by evaluating at $1$ we get $sr=tr$.
\end{proof}

\begin{corollary}\label{cor:amenableequivalence}
Let $S$ be a left cancellative left reversible semigroup. Then $S$ is left amenable if and only if $\gamma_S(S)$ is left amenable if and only if $G(S)$ is amenable if and only if $I_l(S)$ is amenable.
\end{corollary}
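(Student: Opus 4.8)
The plan is to establish the three equivalences (a)$\Leftrightarrow$(b), (b)$\Leftrightarrow$(c) and (c)$\Leftrightarrow$(d), where (a)--(d) denote the four assertions in the order listed; since $G(S)$ is a group and $I_l(S)$ is an inverse semigroup, ``amenable'' and ``left amenable'' coincide in (c) and (d), and the corollary then follows by transitivity.

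For (a)$\Leftrightarrow$(b) I would use the preceding lemma, which gives $\gamma_S(S)\cong S/{\approx_S}$. Because $S$ is left cancellative and left reversible, $\approx_S$ is a congruence (vol.\ I, p.\ 35 of \cite{clifford_preston61}), so Proposition 1.25 of \cite{paterson88} applies and shows that $S$ is left amenable exactly when $S/{\approx_S}\cong\gamma_S(S)$ is.

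For (b)$\Leftrightarrow$(c), I would observe that $\gamma_S(S)$ is a subsemigroup of the group $G(S)$ which generates it (as recorded when $\gamma_S$ was introduced), and is left reversible, being a homomorphic image of the left reversible semigroup $S$ (left reversibility is readily seen to pass to surjective images). Hence the proposition above on left reversible generating subsemigroups shows that $\gamma_S(S)$ is a left thick subset of $G(S)$, and Mitchell's theorem \cite{mitchell65} then yields that $\gamma_S(S)$ is left amenable if and only if $G(S)$ is left amenable, i.e.\ amenable. This is exactly the argument behind Corollary \ref{cor:amenable1}, now carried out for a merely left cancellative $S$ rather than a cancellative one.

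For (c)$\Leftrightarrow$(d), I would recall from Definition \ref{def:grouphomomorphicimage} that $\approx_{I_l(S)}$ is always a congruence and that $I_l(S)/{\approx_{I_l(S)}}=G(S)$; since $S$ is left reversible we have $0\notin I_l(S)$ by the earlier lemma, and an inverse semigroup without zero is left reversible, so Proposition 1.25 of \cite{paterson88} applies to $I_l(S)$ and gives that $I_l(S)$ is left amenable precisely when $G(S)$ is. The only step calling for a little care is checking the hypotheses of the left-thickness proposition for $\gamma_S(S)$ in the middle equivalence --- that it generates $G(S)$ and that it is left reversible; granting those, all three equivalences are direct appeals to Proposition 1.25 of \cite{paterson88}, Mitchell's theorem, and the structural facts about $\gamma_S$ and $I_l(S)$ established earlier.
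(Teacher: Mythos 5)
Your proposal is correct and follows essentially the same route as the paper: both arguments split the statement into the same three equivalences, handle the outer two via Proposition 1.25 of \cite{paterson88} together with the identifications $\gamma_S(S)\simeq S/{\approx_S}$ and $G(S)=I_l(S)/{\approx_{I_l(S)}}$, and handle the middle one by checking that $\gamma_S(S)$ is a left reversible generating subsemigroup of $G(S)$, hence left thick, so that Mitchell's theorem applies. The only cosmetic difference is that the paper verifies left reversibility of $\gamma_S(S)$ by the explicit computation $\gamma_S(s)\gamma_S(S)\cap\gamma_S(t)\gamma_S(S)\supset\gamma_S(sS\cap tS)\neq\varnothing$, which is just the concrete form of your observation that left reversibility passes to surjective homomorphic images.
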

\begin{proof}
To show that the amenability of $G(S)$ is equivalent to left amenability of $\gamma_S(S)$, we need to show that $\gamma_S(S)$ is left reversible. We have that for any $s,t\in S$,
\[
\gamma_S(s)\gamma_S(S)\cap\gamma_S(t)\gamma_S(S)=\gamma_S(sS)\cap\gamma_S(tS)\supset\gamma_S(sS\cap tS)
\]
The right hand side is nonempty, so the left hand side must be nonempty as well. This proves that $\gamma_S(S)$ is left reversible since any $p\in\gamma_S(S)$ is on the form $\gamma_S(s)$ for some $s\in S$. All the other equivalences are taken care of by the results we have developed so far: Since $G(S)=I_l(S)/\approx_{I_l(S)}$, $G(S)$ is amenable if and only if $I_l(S)$ is amenable. Since $\gamma_S(S)\simeq S/\approx_S$, $\gamma_S(S)$ is left amenable if and only if $S$ is left amenable.
\end{proof}

We conclude this subsection by showing that when $S$ is left reversible the construction $S\mapsto G(S)$ is a generalization of the Grothendieck construction in that it is functorial. This is probably already known by specialists, but we give a proof here for completeness. Another way to prove it is to show that any homomorphism of $S$ to a group can be extended to define a group homomorphic image of $I_l(S)$, and then use that $G(S)$ is the maximal group homomorphic image of $I_l(S)$.

\begin{lemma}\label{lem:thicksemihomext}
Let $S$ be a subsemigroup of a group $G$ such that $S$ generates $G$, and let $H$ be a group. If $S$ is left thick in $G$, then every homomorphism $\phi:S\to H$ uniquely extends to a homomorphism $\phi':G\to H$.
\end{lemma}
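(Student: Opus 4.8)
The plan is to realize $G$ as the group of right fractions of $S$ and transport $\phi$ along the fraction expansion. \emph{Uniqueness} of the extension is immediate, since $S$ generates $G$, so any homomorphism out of $G$ is determined by its restriction to $S$.

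For \emph{existence}, I would first record two preliminary facts: a left thick subset of $G$ is in particular left reversible (take $n=2$ in the definition, or invoke the preceding proposition), and $S$ is cancellative, being a subsemigroup of a group. The first step is then to show $G=\{st^{-1}:s,t\in S\}$, with inverses taken in $G$: the right-hand set contains $S$ and $1$ (note $1_S=1_G$, as $1_S$ is idempotent in $G$), is closed under inversion, and is closed under products, because left reversibility lets one rewrite $t^{-1}u$ as $wz^{-1}$ with $tw=uz$, so $(st^{-1})(uv^{-1})=(sw)(vz)^{-1}$; being a subgroup of $G$ containing the generating set $S$, it is all of $G$. (This is in essence Ore's theorem, cf.\ Theorem~\ref{thm:orestheorem}.)

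Next I would define $\phi'\colon G\to H$ by $\phi'(st^{-1})=\phi(s)\phi(t)^{-1}$, and the hard part is checking this is well defined. Given $st^{-1}=uv^{-1}$ with $s,t,u,v\in S$, I would use left reversibility to pick $p,q\in S$ with $tp=vq=:r$, rewrite both fractions with the common denominator $r$ to deduce $sp=uq$ first in $G$ and hence in $S$, and then apply $\phi$ to the three relations $sp=uq$, $tp=r$, $vq=r$ and cancel $\phi(r)$ in the group $H$ to conclude $\phi(s)\phi(t)^{-1}=\phi(u)\phi(v)^{-1}$.

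Finally I would verify that $\phi'$ is multiplicative, using the same common-right-multiple trick (choose $w,z\in S$ with $tw=uz$, so $(st^{-1})(uv^{-1})=(sw)(vz)^{-1}$) and feeding $\phi(tw)=\phi(uz)$, i.e.\ $\phi(w)\phi(z)^{-1}=\phi(t)^{-1}\phi(u)$, into the computation; and that $\phi'$ restricts to $\phi$ on $S$, since $\phi'(s)=\phi(s)\phi(1_S)^{-1}=\phi(s)$. I expect the well-definedness step to be the only genuine obstacle; everything else is routine manipulation of right fractions, and the hypothesis on $S$ is used solely to supply common right multiples.
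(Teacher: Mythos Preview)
Your argument is correct. You and the paper take genuinely different routes, though.

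The paper works directly with arbitrary words $t_1^{-1}s_1\cdots t_n^{-1}s_n$ and defines $\phi'$ on such a word as $\phi(t_1)^{-1}\phi(s_1)\cdots\phi(t_n)^{-1}\phi(s_n)$. To show this is independent of the word representing a given $g\in G$, it uses the full strength of left thickness to find a single $r\in S$ lying in the intersection of all the relevant translates, so that every right tail $t_k^{-1}s_k\cdots t_n^{-1}s_nr$ lands in $S$; an induction on the word length then finishes the well-definedness check. Your approach instead first collapses every element of $G$ to a right fraction $st^{-1}$ via the Ore argument, after which both well-definedness and multiplicativity reduce to a single common-right-multiple trick. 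Your proof uses only left reversibility (which, by the proposition immediately preceding the lemma, is equivalent to left thickness in this setting), and avoids the inductive bookkeeping; the paper's proof avoids first proving the fraction form of $G$ but pays for it with a longer consistency check. Both are standard and either would be acceptable here.
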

\begin{proof}
Let $t_1,\ldots, t_n,s_1,\ldots, s_n\in S$. We want to define $\phi'(t_1^{-1}s_1\cdots t_n^{-1}s_n)=\phi(t_1)^{-1}\phi(s_1)\cdots \phi(t_n)^{-1}\phi(s_n)$, so we need to show that this is a consistent definition. Let $q_1\ldots q_m,p_1\ldots p_m\in S$ be such that
\[
t_1^{-1}s_1\cdots t_n^{-1}s_n=q_1^{-1}p_1\cdots q_m^{-1}p_m
\]
Since $S$ is left thick in $G$,
\[
S\cap\bigcap_{i=1}^n (s_n^{-1}t_n\cdots s_i^{-1}t_i)S\cap\bigcap_{j=1}^m (p_m^{-1}q_m\cdots p_1^{-1}q_1)S\neq\varnothing
\]
So there exists an $r\in S$ such that
\begin{align*}
u_i&:=t_i^{-1}s_i\cdots t_n^{-1}s_nr\in S\\
v_j&:=q_j^{-1}p_j\cdots q_m^{-1}s_mr\in S
\end{align*}
for all $1\leq i\leq n$ and $1\leq j\leq m$. First, $t_n^{-1}s_nr=u_n$, so $s_nr=t_nu_n$, which implies that $\phi(s_n)\phi(r)=\phi(t_n)\phi(u_n)$ and $\phi(t_n)^{-1}\phi(s_n)\phi(r)=\phi(u_n)$. Suppose now that for some $1\leq k\leq n$,
\[
\phi(t_k)^{-1}\phi(s_k)\cdots \phi(t_n)^{-1}\phi(s_n)\phi(r)=\phi(u_k)
\]
Then since
\[
s_{k-1}t_k^{-1}s_k\cdots t_n^{-1}s_nr=s_{k-1}u_k=t_{k-1}u_{k-1}
\]
we get
\[
\phi(t_{k-1})^{-1}\phi(s_{k-1})\phi(u_k)=\phi(u_{k-1})
\]
Using induction, this implies that $\phi(t_1)^{-1}\phi(s_1)\cdots \phi(t_n)^{-1}\phi(s_n)\phi(r)=\phi(u_1)$. Similarly, $\phi(q_1)^{-1}\phi(p_1)\cdots \phi(q_m)^{-1}\phi(p_m)\phi(r)=\phi(v_1)=\phi(u_1)$, so by cancelling with $\phi(r)$ we see that $\phi'$ is well defined. Uniqueness of $\phi'$ is trivial since $S$ generates $G$.
\end{proof}

\begin{theorem}\label{thm:gofsfunctorial}
Let $S$ be left reversible and let $H$ be a group. Then every homomorphism $\phi:S\to H$ gives rise to a unique homomorphism $\phi':G(S)\to H$ such that $\phi'\circ\gamma_S = \phi$.

Moreover for any left cancellative left reversible semigroup $R$ and homomorphism $\phi:S\to R$, there is a unique homomorphism $\phi':G(S)\to G(R)$ such that $\phi'\circ\gamma_S=\gamma_R\circ \phi$.
\end{theorem}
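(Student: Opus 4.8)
The plan is to reduce both assertions to the extension result of Lemma~\ref{lem:thicksemihomext}, exploiting that $\gamma_S(S)$ is a left thick subsemigroup of the group $G(S)$.

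For the first assertion, start with a homomorphism $\phi\colon S\to H$ into a group $H$ and show it factors through $\gamma_S(S)$. If $\gamma_S(s)=\gamma_S(t)$, then by the construction of $G(S)$ (Definition~\ref{def:grouphomomorphicimage}, and as spelled out in the lemma identifying $\gamma_S(S)$ with $S/\approx_S$) there is $r\in S$ with $sr=tr$; applying $\phi$ and cancelling in the group $H$ gives $\phi(s)=\phi(t)$. Hence the assignment $\gamma_S(s)\mapsto\phi(s)$ defines a homomorphism $\bar\phi\colon\gamma_S(S)\to H$ with $\bar\phi\circ\gamma_S=\phi$. Now recall that $\gamma_S(S)$ is a subsemigroup of $G(S)$ which generates $G(S)$, and that it is left reversible (this was checked in the proof of Corollary~\ref{cor:amenableequivalence}); by the proposition above asserting that a group-generating subsemigroup is left reversible precisely when it is left thick, $\gamma_S(S)$ is left thick in $G(S)$. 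Lemma~\ref{lem:thicksemihomext} then extends $\bar\phi$ uniquely to a homomorphism $\phi'\colon G(S)\to H$, and $\phi'\circ\gamma_S=\bar\phi\circ\gamma_S=\phi$. Uniqueness of $\phi'$ among homomorphisms $G(S)\to H$ satisfying $\phi'\circ\gamma_S=\phi$ is automatic since $\gamma_S(S)$ generates $G(S)$.

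The second assertion is then immediate: apply the first part to the homomorphism $\gamma_R\circ\phi\colon S\to G(R)$, which is permissible since $G(R)$ is a group and $S$ is left reversible, to obtain the unique homomorphism $\phi'\colon G(S)\to G(R)$ with $\phi'\circ\gamma_S=\gamma_R\circ\phi$. The only genuinely substantive point is the claim that $\gamma_S(S)$ is left thick in $G(S)$, but this merely requires assembling three facts already established — that $\gamma_S(S)$ is a subsemigroup of a group, that it generates $G(S)$, and that it is left reversible — and feeding them into the earlier equivalence between left reversibility and left thickness; everything else is bookkeeping with universal properties.
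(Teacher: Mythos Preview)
Your argument is correct and follows essentially the same route as the paper: factor $\phi$ through $\gamma_S(S)$ using the congruence $\approx_S$, then invoke Lemma~\ref{lem:thicksemihomext} to extend to $G(S)$, and derive the second assertion by applying the first to $\gamma_R\circ\phi$. You are in fact slightly more careful than the paper in spelling out why Lemma~\ref{lem:thicksemihomext} applies (namely that $\gamma_S(S)$ is left reversible and hence left thick in $G(S)$), which the paper leaves implicit.
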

\begin{proof}
First, we need to show that $\phi:S\to H$ can be pushed down to a homomorphism $\gamma_S(S)\to H$. If $s,t,r\in S$ with $sr=tr$, then $\phi(s)\phi(r)=\phi(t)\phi(r)$, so $\phi(s)=\phi(t)$. This implies that $\phi$ is constant on the equivalence classes of $\approx$, hence there exists a homomorphism $\phi'':\gamma_S(S)\to H$ such that $\phi''\gamma_S=\phi$. By Lemma \ref{lem:thicksemihomext}, $\phi''$ extends to a homomorphism $\phi':G(S)\to H$ such that $\phi'\circ \gamma_S=\phi$. Uniqueness follows since the constructions $\phi\mapsto \phi''$ and $\phi''\mapsto \phi'$ are unique, so if $\psi:S\to G(S)$ is another homomorphism with $\psi\circ\gamma_S=\phi$, then the restriction of $\psi$ to $\gamma_S(S)$ must be equal to $\phi''$.

If $\phi:S\to R$ is a homomorphism, then we can apply the above construction to $\gamma_R\circ \phi:S\to G(R)$ and thereby get the desired $\phi':G(S)\to G(R)$.
\end{proof}

\begin{corollary}\label{cor:functorial}
Let $S$ be left reversible and let $G$ be a group. Then for every homomorphism $\phi:S\to G$ there exists a $\ast$-homomorphism $\pi_\phi:C^\ast(I_l(S))\to C^\ast(G)$ such that $\pi_\phi(\lambda_s)=\lambda_{\phi(s)}$ for each $s\in S$.
\end{corollary}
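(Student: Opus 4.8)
The plan is to produce a homomorphism of inverse semigroups $\psi:I_l(S)\to G$ with $\psi(\lambda_s)=\phi(s)$ for all $s\in S$, and then to appeal to the universal property of the full inverse semigroup $C^\ast$-algebra. Since $S$ is left reversible we have $0\notin I_l(S)$, so the maximal group homomorphic image $G(S)=G(I_l(S))$, the quotient homomorphism $\alpha_S:I_l(S)\to G(S)$, and the map $\gamma_S:S\to G(S)$ with $\gamma_S(s)=\alpha_S(\lambda_s)$ are all defined as in subsection \ref{sec:leftreversible}, and in particular Theorem \ref{thm:gofsfunctorial} applies to $S$.

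First I would apply the first part of Theorem \ref{thm:gofsfunctorial} to the homomorphism $\phi:S\to G$ into the group $G$: this yields a group homomorphism $\phi':G(S)\to G$ with $\phi'\circ\gamma_S=\phi$. Composing with $\alpha_S$ gives $\psi:=\phi'\circ\alpha_S:I_l(S)\to G$. This is a semigroup homomorphism between inverse semigroups, so it automatically satisfies $\psi(p^\ast)=\psi(p)^\ast$ for all $p\in I_l(S)$, and it satisfies $\psi(\lambda_s)=\phi'(\gamma_S(s))=\phi(s)$ for each $s\in S$. Viewing the group $G$ as an inverse semigroup (its semilattice of idempotents being $\{1_G\}$), the inverse semigroup $C^\ast$-algebra of $G$ is just the usual full group $C^\ast$-algebra $C^\ast(G)$. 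The fact recalled after the definition of $C^\ast(P)$ — that every homomorphism of inverse semigroups $P\to Q$ extends to a $\ast$-homomorphism $C^\ast(P)\to C^\ast(Q)$ — applied to $\psi$ then produces $\pi_\phi:=\pi_\psi:C^\ast(I_l(S))\to C^\ast(G)$ with $\pi_\phi(p)=\psi(p)$ for all $p\in I_l(S)$. In particular $\pi_\phi(\lambda_s)=\psi(\lambda_s)=\lambda_{\phi(s)}$, the canonical unitary of $\phi(s)$ in $C^\ast(G)$, which is exactly the required property.

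Since the argument is just a concatenation of Theorem \ref{thm:gofsfunctorial}, the quotient homomorphism $\alpha_S$, and the universal property of $C^\ast(I_l(S))$, there is no serious obstacle. The only points deserving a sentence of care are that left reversibility of $S$ is precisely what guarantees $0\notin I_l(S)$ and hence makes $G(S)$ and Theorem \ref{thm:gofsfunctorial} available, and that one must identify the inverse semigroup $C^\ast$-algebra of the group $G$ with the ordinary group $C^\ast$-algebra $C^\ast(G)$ so that the target of $\pi_\phi$ and the formula $\pi_\phi(\lambda_s)=\lambda_{\phi(s)}$ are meaningful.
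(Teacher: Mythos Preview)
Your proof is correct and follows essentially the same route as the paper's: apply Theorem~\ref{thm:gofsfunctorial} to obtain $\phi':G(S)\to G$ with $\phi'\circ\gamma_S=\phi$, compose with $\alpha_S$ to get an inverse semigroup homomorphism $I_l(S)\to G$, and then invoke the universal property of $C^\ast(I_l(S))$. The additional remarks you include (that left reversibility is exactly what makes $G(S)$ available, and that $C^\ast(G)$ coincides with the inverse semigroup $C^\ast$-algebra of $G$) are correct and harmless elaborations of what the paper leaves implicit.
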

\begin{proof}
Consider a homomorphism $\phi:S\to G$. From Theorem \ref{thm:gofsfunctorial} there exists a homomorphism $\phi':G(S)\to G$ such that $\phi'\circ\gamma_S = \phi$. Then $\phi'\circ \alpha_S:I_l(S)\to G$ satisfies $\phi'(\alpha_{I_l(S)}(\lambda_s))=\lambda_{\phi(s)}$ for each $s\in S$, so the existence of $\pi_\phi$ follows from the universal property of $C^\ast(I_l(S))$.
\end{proof}

For exampe if $S$ is left reversible we may consider the quotient homomorphism $\alpha_S:I_l(S)\to G(S)$ and obtain a surjective $\ast$-homomorphism $\pi_S:C^\ast(I_l(S))\to C^\ast(G(S))$. (Li also shows the existence of such a map from $C^\ast_s(S)$). When $S=\mathbb{Z}^+$ this is the surjective part of the classical $C^\ast$-extension
\[
0\to K(\ell^2(\mathbb{Z}^+))\to C^\ast_r(\mathbb{Z}^+) \to C(\mathbb{T})\to 0
\]
where $K(\ell^2(\mathbb{Z}^+))$ are the compact operators on $\ell^2(\mathbb{Z}^+)$, $C^\ast_r(\mathbb{Z}^+)\simeq C^\ast_r(I_l(\mathbb{Z}^+))\simeq C^\ast(I_l(\mathbb{Z}^+))$ is the unique $C^\ast$-algebra generated by a single isometry, and $C(\mathbb{T})\simeq C^\ast(\mathbb{Z})\simeq C^\ast(G(\mathbb{Z}^+))$.

By Proposition 1.4 in \cite{duncan_paterson85}, there is actually always a canonical $\ast$-homomorphism $\pi_{S,r}:C^\ast_r(I_l(S))\to C^\ast_r(G(S))$ as well. In general it would be interesting to have a description of the kernel of $\pi_S$ and $\pi_{S,r}$. Nica \cite{nica92} gives some necessary and sufficient conditions for $C^\ast_r(S)$ to contain the compacts when $(G,S)$ is a quasilattice ordered group.
\subsection{Amenability and weak containment when $S$ embeds into a group.}\label{sec:weakcont}

In \cite{li11}, Li shows that if $S$ is left reversible, embeds into a group, and $J(S)$ is independent, then $S$ is left amenable if and only if  the canonical map $C^\ast_s(S)\to C^\ast_r(S)$ is an isomorphism. Note that to recover this formulation of the result from from Li's statement, one has to use the fact that when $S$ embeds into a group, $S$ is left reversible if and only if there is a character on $C^\ast_s(S)$. This is proved in Li's Lemma 4.6. One also has to use that since $S$ is left reversible, $S$ is cancellative if and only if $S$ embeds into a group. From Milan's article \cite{milan10}, we know that an $E$-unitary inverse semigroup $P$ has weak containment if and only if $G(P)$ is amenable. Hence Theorem \ref{thm:orestheorem} and Corollary \ref{cor:amenable1} give us:

\begin{theorem}\label{thm:leftreversibleamenable}
A cancellative left reversible semigroup $S$  is left amenable if and only if $I_l(S)$ has weak containment.
\end{theorem}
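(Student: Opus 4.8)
The plan is to assemble the statement from three facts that are already available. Since $S$ is left reversible we have $0\notin I_l(S)$, so $G(S)=G(I_l(S))$ really is the maximal group homomorphic image of a zero-free inverse semigroup; in particular, for $I_l(S)$ the notions of being $E$-unitary and strongly $E^\ast$-unitary coincide, so Milan's criterion for weak containment is applicable to it.

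First I would use Theorem \ref{thm:orestheorem}: because $S$ is cancellative and left reversible, the equivalence (i)$\Leftrightarrow$(iv) there tells us that $I_l(S)$ is $E$-unitary. By Milan's result in \cite{milan10} (recalled just before the statement), an $E$-unitary inverse semigroup $P$ has weak containment if and only if $G(P)$ is amenable. Applying this with $P=I_l(S)$ yields that $I_l(S)$ has weak containment if and only if $G(S)=G(I_l(S))$ is amenable.

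Next I would invoke Corollary \ref{cor:amenable1}: for a cancellative left reversible semigroup $S$, left amenability of $S$ is equivalent to amenability of $G(S)$. Combining the two equivalences,
\[
S\text{ left amenable}\iff G(S)\text{ amenable}\iff I_l(S)\text{ has weak containment},
\]
which is precisely the claim.

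I do not expect any genuine obstacle here: all of the substantive work has been carried out in establishing Theorem \ref{thm:orestheorem}, Corollary \ref{cor:amenable1}, and in Milan's theorem. The only point that needs care is verifying that the hypothesis of Milan's theorem — that $I_l(S)$ be $E$-unitary — is met, and this is exactly what the cancellativity of the left reversible semigroup $S$ buys us via Theorem \ref{thm:orestheorem}.
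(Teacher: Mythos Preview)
Your proposal is correct and is essentially identical to the paper's own argument: the paper also deduces the theorem directly from Milan's $E$-unitary criterion together with Theorem~\ref{thm:orestheorem} and Corollary~\ref{cor:amenable1}.
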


This lets us recover Li's result.

\begin{corollary}
Suppose $S$ is left reversible, embeds into a group, and $J(S)$ is independent. Then $S$ is left amenable if and only if the canonical map $C^\ast_s(S)\to C^\ast_r(S)$ is an isomorphism.
\end{corollary}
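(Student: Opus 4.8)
The plan is to use the factorization of the canonical map $C^\ast_s(S)\to C^\ast_r(S)$ recorded after Proposition \ref{prop:gisomorphism}, and to show that under the stated hypotheses the only one of the three composed maps that can fail to be an isomorphism is $\Lambda_0$, whose bijectivity is precisely weak containment of $I_l(S)$.

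First I would recall that the canonical $\ast$-homomorphism $C^\ast_s(S)\to C^\ast_r(S)$ factors as
\[
C^\ast_s(S) \xrightarrow{\ \kappa\ } C^\ast_0(I_l(S)) \xrightarrow{\ \Lambda_0\ } C^\ast_{r,0}(I_l(S)) \xrightarrow{\ h\ } C^\ast_r(S).
\]
Since $S$ embeds into a group, Proposition \ref{prop:gisomorphism2} gives that $\kappa$ is an isomorphism. Since $S$ is left reversible, $0\notin I_l(S)$, so $C^\ast_0(I_l(S))\simeq C^\ast(I_l(S))$ and $C^\ast_{r,0}(I_l(S))\simeq C^\ast_r(I_l(S))$, and under these identifications $\Lambda_0$ becomes the left regular representation $\Lambda$ of $I_l(S)$; in particular $\Lambda_0$ is an isomorphism if and only if $I_l(S)$ has weak containment.

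Next I would verify that $h$ is an isomorphism. Because $S$ is left reversible and embeds into a group, Theorem \ref{thm:orestheorem} yields that $S$ is cancellative and that $I_l(S)$ is $E$-unitary; having no zero, $I_l(S)$ is in particular $E^\ast$-unitary. Combining this with the hypothesis that $J(S)$ is independent, Theorem \ref{thm:ucondisomcstar} shows that $h$ is an isomorphism.

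Finally I would assemble the pieces: since $\kappa$ and $h$ are isomorphisms, the canonical map $C^\ast_s(S)\to C^\ast_r(S)$ is an isomorphism if and only if $\Lambda_0$ is, i.e.\ if and only if $I_l(S)$ has weak containment; and since $S$ is cancellative and left reversible, Theorem \ref{thm:leftreversibleamenable} says this occurs if and only if $S$ is left amenable. This is bookkeeping rather than a genuinely hard argument; the one point deserving care is the observation that left reversibility together with group-embeddability forces $S$ to be cancellative (via Theorem \ref{thm:orestheorem}), which is exactly what makes both the $E^\ast$-unitarity input to Theorem \ref{thm:ucondisomcstar} and the hypothesis of Theorem \ref{thm:leftreversibleamenable} available.
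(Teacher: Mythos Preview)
Your proof is correct and follows essentially the same route as the paper: factor the canonical map as $h\circ\Lambda_0\circ\kappa$, use Proposition~\ref{prop:gisomorphism2} for $\kappa$, Theorems~\ref{thm:orestheorem} and~\ref{thm:ucondisomcstar} for $h$, and Theorem~\ref{thm:leftreversibleamenable} to identify bijectivity of $\Lambda_0$ with left amenability of $S$. Your write-up is slightly more explicit than the paper's (in particular in spelling out why $I_l(S)$ is $E^\ast$-unitary and why cancellativity follows), but the argument is the same.
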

\begin{proof}
Theorem \ref{thm:orestheorem} and Theorem \ref{thm:ucondisomcstar} imply together that $h$ is an isomorphism. Proposition \ref{prop:gisomorphism2} shows that $\kappa$ is an isomorphism. Theorem \ref{thm:leftreversibleamenable} shows that when $S$ is left reversible, $\Lambda$ is an isomorphism if and only if $S$ is left amenable. The composition of $\kappa$, $\Lambda$ and $h$ is the canonical map $C^\ast_s(S)\to C^\ast_r(S)$.
\end{proof}

\begin{corollary}
Suppose $S$ is cancellative, left reversible, and satisfies Clifford's condition. Then the canonical map $C^\ast(S)\to C^\ast_r(S)$ is an isomorphism if and only if $S$ is left amenable.
\end{corollary}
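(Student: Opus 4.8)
The plan is to factor the canonical map $C^\ast(S)\to C^\ast_r(S)$ through the inverse semigroup $C^\ast$-algebras of $I_l(S)$ and then identify precisely which of the three factors is responsible for a possible failure of injectivity. Recall from the discussion following Proposition \ref{prop:gisomorphism} that the canonical map decomposes as
\[
C^\ast(S)\xrightarrow{\ \eta\ } C^\ast_0(I_l(S))\xrightarrow{\ \Lambda_0\ } C^\ast_{r,0}(I_l(S))\xrightarrow{\ h\ } C^\ast_r(S).
\]

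First I would use the hypotheses to dispose of the two outer maps. Since $S$ satisfies Clifford's condition, Proposition \ref{prop:gisomorphism} says that $\eta$ is an isomorphism. Since $S$ is moreover cancellative and satisfies Clifford's condition, Corollary \ref{cor:cliffordhisomorphism} says that $h$ is an isomorphism. Hence the canonical map $C^\ast(S)\to C^\ast_r(S)$ is an isomorphism if and only if the middle map $\Lambda_0\colon C^\ast_0(I_l(S))\to C^\ast_{r,0}(I_l(S))$ is an isomorphism; by definition this is exactly the assertion that $I_l(S)$ has weak containment.

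It then remains to invoke Theorem \ref{thm:leftreversibleamenable}: because $S$ is cancellative and left reversible, $I_l(S)$ has weak containment if and only if $S$ is left amenable. Chaining these equivalences together yields the statement. The argument is essentially bookkeeping about which earlier results apply; there is no genuinely new obstacle, since the only substantive equivalence — weak containment of $I_l(S)$ versus left amenability of $S$, which ultimately rests on Ore's theorem (Theorem \ref{thm:orestheorem}), Corollary \ref{cor:amenable1}, and Milan's characterization of weak containment for $E$-unitary inverse semigroups — has already been packaged into Theorem \ref{thm:leftreversibleamenable}. The one point to be careful about is simply that all three of the required hypotheses (cancellativity, left reversibility, Clifford's condition) are genuinely used: Clifford's condition for $\eta$, cancellativity plus Clifford's condition for $h$, and cancellativity plus left reversibility for the weak containment criterion.
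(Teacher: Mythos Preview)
Your proposal is correct and follows essentially the same route as the paper: factor the canonical map as $h\circ\Lambda_0\circ\eta$, kill $\eta$ via Proposition \ref{prop:gisomorphism} (Clifford's condition), kill $h$ via Corollary \ref{cor:cliffordhisomorphism} (cancellativity plus Clifford's condition), and then apply Theorem \ref{thm:leftreversibleamenable} to equate weak containment of $I_l(S)$ with left amenability of $S$. Your accounting of which hypothesis is consumed at which step is accurate and in fact slightly more explicit than the paper's own proof.
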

\begin{proof}
By Corollary \ref{lem:amenablereversible}, $h$ is an isomorphism, and by Proposition \ref{prop:gisomorphism}, $\eta$ is an isomorphism. Since $S$ is left reversible, Theorem \ref{thm:leftreversibleamenable} implies that $S$ is left amenable if and only if $\Lambda$ is an isomorphism. The composition of $\eta$, $\Lambda$ and $h$ is the canonical map $C^\ast(S)\to C^\ast_r(S)$.
\end{proof}

\begin{remark}
Corollary \ref{cor:amenableequivalence} does \emph{not} imply that left amenability of $S$ is equivalent to weak containment of $I_l(S)$ for any left reversible $S$. Without $I_l(S)$ being $E$-unitary, one also has to prove that the inverse semigroup $H:=\alpha_S^{-1}(1_{G(S)})$ has weak containment (see Theorem 2.4 and Corollary 2.5 in \cite{milan10}). Milan's results do however give us that weak containment of $I_l(S)$ implies left amenability of $G(S)$ and thus of $S$ (for left reversible $S$).
\end{remark}

When $S$ is not left reversible, $S$ can't be left amenable, but $\Lambda_0:C^\ast_0(I_l(S))\to C^\ast_{r,0}(I_l(S))$ can still be an isomorphism. Nica shows in \cite{nica92} that his version of the full and reduced $C^\ast$-algebras for $\mathbb{F}_n^+$ are canonically isomorphic. Here $\mathbb{F}_n^+$ is the free semigroup on $n$ generators. This implies that $\Lambda_0$ is an isomorphism in this case. $\mathbb{F}_n^+$ is easily seen to be not left reversible for $n>1$.

Milan \cite{milan10} has developed a technique for determining weak containment of strongly $E^\ast$-unitary inverse semigroups $P$. Fixing an idempotent pure grading $\varphi:P\to G^0$, he defines
\begin{align*}
A_g&=\spn\{p\in P:\varphi(p)=g\}\mbox{ inside }\mathbb{C}P/\mathbb{C}0_P\\
B_g&=\overline{A_g}\mbox{ inside }C^\ast_0(P)
\end{align*}
Milan then shows that $\{B_g\}_{g\in G}$ is a Fell bundle over $G$ and that $P$ has weak containment if and only if this Fell bundle is amenable. Milan states this result for the universal grading of $P$, but the proof works for any idempotent pure grading.

In our setting, $I_l(S)$ is strongly $E^\ast$-unitary if and only if $S$ embeds into a group $G$. Recaling the idempotent pure grading $\varphi:I_l(S)^0\to G^0$ constructed in Proposition \ref{prop:groupstrongeastunitary} one sees that the associated Fell bundle $\{B_g\}_{g\in G}$ is given by
\begin{equation}\label{eq:fellbundle}
B_g=\overline{\spn\{ \lambda_{t_1}^\ast \lambda_{s_1}\cdots \lambda_{t_n}^\ast \lambda_{s_n}: t_1^{-1}s_1\cdots t_n^{-1}s_n=g\}}\quad\mbox{ in }C^\ast_0(I_l(S))
\end{equation}

\begin{theorem}
Suppose $S$ embeds into a group $G$. Then $\Lambda_0:C^\ast_0(I_l(S))\to C^\ast_{r,0}(I_l(S))$ is an isomorphism if and only if the Fell bundle $\{B_g\}_{g\in G}$ defined in equation \eqref{eq:fellbundle} is amenable.
\end{theorem}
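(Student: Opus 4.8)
The plan is to read the statement off from Milan's characterization of weak containment for strongly $E^\ast$-unitary inverse semigroups, recalled just above. First I would note that, since $S$ embeds into $G$, Proposition \ref{prop:groupstrongeastunitary} shows $I_l(S)$ is strongly $E^\ast$-unitary, and that the grading witnessing this may be taken to be the idempotent pure map $\varphi : I_l(S)^0 \to G^0$ of that proposition, namely $\varphi(\lambda_{t_1}^\ast\lambda_{s_1}\cdots\lambda_{t_n}^\ast\lambda_{s_n}) = t_1^{-1}s_1\cdots t_n^{-1}s_n$ whenever the left-hand side is nonzero and $\varphi(0)=0$. Comparing with the formula defining $A_g$ and $B_g$ in Milan's construction, the Fell bundle over $G$ attached to $\varphi$ is precisely the family $\{B_g\}_{g\in G}$ written down in equation \eqref{eq:fellbundle}.

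Next I would unwind the definitions. By the remark following the definition of weak containment, $\Lambda_0 : C^\ast_0(I_l(S)) \to C^\ast_{r,0}(I_l(S))$ is an isomorphism if and only if $\Lambda : C^\ast(I_l(S)) \to C^\ast_r(I_l(S))$ is, that is, if and only if $I_l(S)$ has weak containment. Milan's theorem, applied to the strongly $E^\ast$-unitary semigroup $I_l(S)$ with the grading $\varphi$, states exactly that $I_l(S)$ has weak containment if and only if the Fell bundle $\{B_g\}_{g\in G}$ is amenable. Composing these two equivalences yields the theorem.

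The one point that is not purely formal is that Milan phrases his result for the \emph{universal} idempotent pure grading $\alpha_{I_l(S)} : I_l(S) \to G(I_l(S))^0$, whereas $\varphi$ takes values in the given group $G$, which need not coincide with $G(I_l(S))$. As already observed in the discussion preceding the statement, however, Milan's construction and proof use only that the grading is multiplicative on nonzero products and that it is idempotent pure, so they go through verbatim for any such grading; substituting $\varphi$ for $\alpha_{I_l(S)}$ is therefore harmless. Granting that, no real obstacle remains: the content of the theorem is carried entirely by Proposition \ref{prop:groupstrongeastunitary} and by Milan's (suitably general) theorem, and the proof is a matter of matching up the two descriptions of the Fell bundle.
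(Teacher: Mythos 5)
Your argument is correct and is exactly the one the paper intends: the theorem is an immediate application of Milan's weak-containment criterion to the idempotent pure grading from Proposition \ref{prop:groupstrongeastunitary}, combined with the observation that $\Lambda_0$ is an isomorphism precisely when $I_l(S)$ has weak containment. You also correctly flag, as the paper does, that Milan's result must be used for an arbitrary idempotent pure grading rather than only the universal one; nothing is missing.
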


\begin{corollary}\label{cor:quasilatticeamenable}
Suppose $S$ embeds into a group and satisfies Clifford's condition. Then the canonical map $C^\ast(S)\to C^\ast_r(S)$ is an isomorphism if and only if the Fell bundle $\{B_g\}_{g\in G}$ given by
\begin{equation}\label{eq:fellbundle2}
B_g=\overline{\spn\{ \lambda_s\lambda_t^\ast: st^{-1}=g\}}\quad\mbox{ in }C^\ast_0(I_l(S))
\end{equation}
is amenable.
\end{corollary}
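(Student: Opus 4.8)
The plan is to read the statement off from the factorization
\[
C^\ast(S)\xrightarrow{\eta}C^\ast_0(I_l(S))\xrightarrow{\Lambda_0}C^\ast_{r,0}(I_l(S))\xrightarrow{h}C^\ast_r(S)
\]
of the canonical map. First, since $S$ embeds into a group it is in particular cancellative, so Corollary~\ref{cor:cliffordhisomorphism} applies and $h$ is an isomorphism; and since $S$ satisfies Clifford's condition, Proposition~\ref{prop:gisomorphism} gives that $\eta$ is an isomorphism. Hence the canonical map $C^\ast(S)\to C^\ast_r(S)$, being $h\circ\Lambda_0\circ\eta$, is an isomorphism if and only if $\Lambda_0$ is an isomorphism.

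Next I would invoke the preceding theorem: as $S$ embeds into $G$, $\Lambda_0$ is an isomorphism if and only if the Fell bundle $\{B_g\}_{g\in G}$ of \eqref{eq:fellbundle}, built from the idempotent pure grading $\varphi\colon I_l(S)^0\to G^0$ of Proposition~\ref{prop:groupstrongeastunitary}, is amenable. It therefore remains only to identify this Fell bundle with the one described in \eqref{eq:fellbundle2}. Here Proposition~\ref{prop:cliffordsimplification} does the work: under Clifford's condition $I_l(S)\setminus\{0\}=\{\lambda_p\lambda_q^\ast:p,q\in S\}$, so every generator $\lambda_{t_1}^\ast\lambda_{s_1}\cdots\lambda_{t_n}^\ast\lambda_{s_n}$ occurring in \eqref{eq:fellbundle} is either $0$, hence trivial in $C^\ast_0(I_l(S))$, or equals some $\lambda_p\lambda_q^\ast$; and since $\varphi$ is multiplicative on nonzero products with $\varphi(\lambda_p)=p$ and $\varphi(\lambda_q^\ast)=q^{-1}$, one has $\varphi(\lambda_p\lambda_q^\ast)=pq^{-1}$, so such an element has degree $g$ precisely when $pq^{-1}=g$. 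Conversely every $\lambda_s\lambda_t^\ast$ with $st^{-1}=g$ is a generator of degree $g$ for \eqref{eq:fellbundle}. The two spanning sets thus agree modulo $\mathbb{C}0_{I_l(S)}$, so $B_g$ is the same in both pictures, and the corollary follows.

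This argument is essentially bookkeeping with results already in hand, so I do not expect a genuine obstacle; the one step that needs a little care is the identification of the two descriptions of $\{B_g\}$, namely checking that discarding the zero element from the spanning set leaves the closed span unchanged and that $\varphi(\lambda_s\lambda_t^\ast)=st^{-1}$ (most cleanly seen by writing $\lambda_s\lambda_t^\ast=\lambda_1^\ast\lambda_s\lambda_t^\ast\lambda_1$ and applying the formula in Proposition~\ref{prop:groupstrongeastunitary}).
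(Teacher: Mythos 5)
Your proposal is correct and follows the same route as the paper: the paper's proof consists of the single observation that Proposition \ref{prop:cliffordsimplification} identifies the Fell bundles of \eqref{eq:fellbundle} and \eqref{eq:fellbundle2}, with the reduction via $\eta$, $h$ and the preceding theorem left implicit from the surrounding discussion. You have simply spelled out those implicit steps (including the correct computation $\varphi(\lambda_s\lambda_t^\ast)=st^{-1}$ and the harmless discarding of zero generators), so there is nothing to object to.
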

\begin{proof}
For semigroups satisfying Clifford's condition, one can use Proposition \ref{prop:cliffordsimplification} to deduce that the Fell bundles defined in equations \eqref{eq:fellbundle} and \eqref{eq:fellbundle2} are the same.
\end{proof}

When $(G,S)$ is a quasilattice ordered group, this expresses Nica's amenability of $(G,S)$ in terms of the amenability of the Fell bundle defined in eq. \eqref{eq:fellbundle2}, and is in view of Lemma \ref{lem:quasilatticesemigroup} merely a restatement of Proposition 5.2 i \cite{milan10}. In the case where $S$ is a finitely generated free semigroup, amenability of the Fell bundle defined in eq. \eqref{eq:fellbundle2} may be deduced from \cite{exel00}. However the proof one would thereby get from Corollary \ref{cor:quasilatticeamenable} that $C^\ast(S)\to C^\ast_r(S)$ is an isomorphism would not be simpler than Nica's original proof \cite{nica92}.

Li \cite{li11} shows that $C^\ast(S)$ and $C^\ast_r(S)$ are nuclear when $S$ is countable, cancellative and right amenable. The last two conditions imply that $S$ embeds into an amenable group. We show that $S$ does not have to be countable to prove that $C^\ast_s(S)$ is nuclear.

\begin{proposition}\label{prop:amenableisomnuclear}
Suppose $S$ embeds into an amenable group. Then $\Lambda_0$ is an isomorphism, and $C^\ast_0(I_l(S))\simeq C^\ast_s(S)$ and $C^\ast_r(S)$ are nuclear.
\end{proposition}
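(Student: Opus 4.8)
The plan is to route everything through Milan's Fell bundle description of $C^\ast_0(I_l(S))$, together with two standard facts about Fell bundles over amenable discrete groups. Since $S$ embeds into the amenable group $G$, Proposition \ref{prop:groupstrongeastunitary} shows $I_l(S)$ is strongly $E^\ast$-unitary with the idempotent pure grading $\varphi\colon I_l(S)^0\to G^0$ constructed there, and, following Milan, $C^\ast_0(I_l(S))$ and $C^\ast_{r,0}(I_l(S))$ are the full and reduced cross-sectional $C^\ast$-algebras of the associated Fell bundle $\{B_g\}_{g\in G}$ of \eqref{eq:fellbundle}. First I would invoke the well-known fact that every Fell bundle over a discrete amenable group is amenable, so that these two cross-sectional algebras coincide; by the theorem immediately preceding this proposition this says exactly that $\Lambda_0$ is an isomorphism, and combining with Proposition \ref{prop:gisomorphism2} gives $C^\ast_s(S)\simeq C^\ast_0(I_l(S))\simeq C^\ast_{r,0}(I_l(S))$.

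For nuclearity I would next pin down the unit fibre $B_e$. Because $\varphi$ is idempotent pure, $\varphi^{-1}(\{1_G\})=L\setminus\{0\}$ where $L=\{i_X:X\in J(S)\}$ is the semilattice of idempotents of $I_l(S)$; hence $B_e$ is the closed linear span of a commuting family of projections, so it is a commutative $C^\ast$-algebra (canonically $C^\ast_0(J(S)^0)$, cf.\ Corollary \ref{cor:maxembeddednorm}) and in particular nuclear. Then I would apply the standard result that the cross-sectional $C^\ast$-algebra of a Fell bundle over a discrete amenable group is nuclear as soon as its unit fibre is nuclear; no separability hypothesis is needed here, which is the point of dropping Li's countability assumption. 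This makes $C^\ast_0(I_l(S))$, hence $C^\ast_s(S)$, nuclear. Finally, $C^\ast_r(S)$ is a quotient of $C^\ast_{r,0}(I_l(S))$ via the surjective $\ast$-homomorphism $h$ constructed before Theorem \ref{thm:ucondisomcstar}, and quotients of nuclear $C^\ast$-algebras are nuclear, so $C^\ast_r(S)$ is nuclear too.

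The only non-bookkeeping ingredients are the two imported statements about Fell bundles over amenable groups: automatic amenability of the bundle, and passage of nuclearity from the unit fibre to the cross-sectional $C^\ast$-algebra. Both are classical --- the crossed-product and twisted-crossed-product cases are of Takai/Packer--Raeburn type, and the general Fell-bundle versions are available in the literature reachable through \cite{milan10} and \cite{exel00} --- but one should cite clean references for them. Everything else (the identification $C^\ast_s(S)\simeq C^\ast_0(I_l(S))$, the commutativity of $B_e$, and the quotient argument for $C^\ast_r(S)$) is routine given the results already established above.
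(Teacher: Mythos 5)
Your proposal is correct and follows essentially the same route as the paper: amenability of $G$ gives amenability of the Fell bundle $\{B_g\}_{g\in G}$ (the paper obtains this via Exel's approximation property), hence $\Lambda_0$ is an isomorphism; the unit fibre is the abelian algebra spanned by $J(S)$, so nuclearity of the cross-sectional algebra follows from the Abadie--Vicens type result; and $C^\ast_r(S)$ inherits nuclearity as a quotient. The only cosmetic difference is that the paper phrases the second imported fact as ``approximation property plus nuclear unit fibre implies nuclear cross-sectional algebra'' rather than citing amenability of the group directly, but the content is identical.
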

\begin{proof}
From Theorem 4.7 in \cite{exel97} we know that a Fell bundle over an amenable group satisfies the approximation property, and is thus amenable. Moreover, it was proved in \cite{abadievicens97} that a Fell bundle with nuclear unit fiber has nuclear cross-sectional $C^\ast$-algebra if it also satisfies the approximation property. The unit fiber in $\{B_g\}_{g\in G}$ is the closure of the span of $J(S)$ in $C^\ast_0(I_l(S))$, and is abelian. $C^\ast_r(S)$ is also nuclear since it is a quotient of $C^\ast_0(I_l(S))$ (see Theorem 10.1.3 in \cite{brown_ozawa08}).
\end{proof}

\begin{corollary}
Let $S$ be cancellative and left reversible. Then $S$ is left amenable if and only if $C^\ast_0(I_l(S))\simeq C^\ast_s(S)$ is nuclear. If in addition $J(S)$ is independent, $S$ is left amenable if and only if $C^\ast_r(S)$ is nuclear.
\end{corollary}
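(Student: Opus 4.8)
The plan is to reduce both equivalences to the amenability of the discrete group $G(S)$, using Corollary \ref{cor:amenableequivalence} (for $S$ cancellative and left reversible, $S$ is left amenable iff $G(S)$ is amenable) together with the classical fact that a discrete group $G$ is amenable iff $C^\ast(G)$, equivalently $C^\ast_r(G)$, is nuclear. I will use throughout that left reversibility of $S$ forces $0\notin I_l(S)$, so that $C^\ast_0(I_l(S))=C^\ast(I_l(S))$ and $C^\ast_{r,0}(I_l(S))=C^\ast_r(I_l(S))$, and that by Theorem \ref{thm:orestheorem} a cancellative left reversible $S$ embeds into $G(S)$ through $\gamma_S$.

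For the forward implications of both statements I would argue as follows. Assuming $S$ is left amenable, Corollary \ref{cor:amenableequivalence} gives that $G(S)$ is amenable, and since $\gamma_S$ is injective, $S$ embeds into the amenable group $G(S)$. Proposition \ref{prop:amenableisomnuclear} then yields at once that $C^\ast_s(S)\simeq C^\ast_0(I_l(S))$ (the isomorphism being Proposition \ref{prop:gisomorphism2}) and $C^\ast_r(S)$ are nuclear; note that this half requires neither independence of $J(S)$ nor countability of $S$.

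For the reverse implication of the first statement, suppose $C^\ast_0(I_l(S))=C^\ast(I_l(S))$ is nuclear. The quotient homomorphism $\alpha_S:I_l(S)\to G(S)$ induces the surjective $\ast$-homomorphism $\pi_S:C^\ast(I_l(S))\to C^\ast(G(S))$, so $C^\ast(G(S))$ is a quotient of a nuclear algebra and hence nuclear; therefore $G(S)$ is amenable and $S$ is left amenable by Corollary \ref{cor:amenableequivalence}. For the reverse implication of the second statement, assume moreover that $J(S)$ is independent and that $C^\ast_r(S)$ is nuclear. By Theorem \ref{thm:orestheorem}, $I_l(S)$ is $E$-unitary, hence $E^\ast$-unitary, so Theorem \ref{thm:ucondisomcstar} applies; independence of $J(S)$ makes $h:C^\ast_{r,0}(I_l(S))\to C^\ast_r(S)$ an isomorphism, so $C^\ast_r(I_l(S))=C^\ast_{r,0}(I_l(S))$ is nuclear. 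Composing with the canonical $\ast$-homomorphism $\pi_{S,r}:C^\ast_r(I_l(S))\to C^\ast_r(G(S))$ from Proposition 1.4 of \cite{duncan_paterson85} exhibits $C^\ast_r(G(S))$ as a quotient of a nuclear algebra, so $G(S)$ is amenable and $S$ is left amenable.

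There is no genuine mathematical obstacle here; the construction is a careful assembly of results already in place, and the only step needing a line of justification is the surjectivity of $\pi_{S,r}$: it has closed range, being a $\ast$-homomorphism of $C^\ast$-algebras, and its range contains the unitaries $u_{\gamma_S(s)}$, which generate $C^\ast_r(G(S))$ since $\gamma_S(S)$ generates $G(S)$; hence $\pi_{S,r}$ is onto. (Surjectivity of $\pi_S$ is already recorded in the text.) The main thing to get right is invoking each cited result in the correct direction and keeping track of which hypotheses ($0\notin I_l(S)$, cancellativity, independence of $J(S)$) are used where.
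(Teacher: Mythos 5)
Your proof is correct and follows essentially the same route as the paper: the forward direction via Proposition \ref{prop:amenableisomnuclear} after embedding $S$ into the amenable group $G(S)$, and the reverse directions by passing nuclearity to $C^\ast(G(S))$ (resp.\ $C^\ast_r(G(S))$, via Theorem \ref{thm:ucondisomcstar}) as quotients and invoking the equivalence of amenability of $G(S)$ with left amenability of $S$. Your extra care in verifying that $I_l(S)$ is $E^\ast$-unitary before applying Theorem \ref{thm:ucondisomcstar}, and in justifying surjectivity of $\pi_{S,r}$, only makes explicit steps the paper leaves implicit.
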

\begin{proof}
One implication follows from Proposition \ref{prop:amenableisomnuclear} since a cancellative left amenable $S$ embeds into the amenable group $G(S)$. It remains to see that $C^\ast_0(I_l(S))$ is nuclear implies $S$ is left amenable. This was shown for $C^\ast_s(S)$ in Proposition 4.17 in \cite{li11} with an argument analogous to the following: $C^\ast(G(S))$ is a quotient of $C^\ast_0(I_l(S))$ and is thus nuclear. This implies that $G(S)$ is amenable and that $S$ is left amenable. (See Theorem 10.1.3 and 2.6.8 in \cite{brown_ozawa08}).

Assume that $J(S)$ is independent and $C^\ast_r(S)$ is nuclear. By Theorem \ref{thm:ucondisomcstar} $C^\ast_{0,r}(I_l(S))\simeq C^\ast_r(S)$. $C^\ast_r(G(S))$ is a quotient of $C^\ast_{0,r}(I_l(S))$ and is thus nuclear. This implies that $G(S)$ is amenable (Theorem 2.6.8 in \cite{brown_ozawa08}).
\end{proof}

\begin{remark}
Note that when $S$ is left reversible, but not right cancellative, nuclearity of $C^\ast_0(I_l(S))$ still implies that $S$ is left amenable.
\end{remark}

\begin{corollary}
Let $R$ be a GCD domain. Then the canonical $\ast$-homomorphism $C^\ast(R\rtimes R^\times)\to C^\ast_r(R\rtimes R^\times)$ is an isomorphism and $C^\ast(R\rtimes R^\times)$ is nuclear.
\end{corollary}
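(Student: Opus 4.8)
The plan is to assemble results already established, reading the conclusion off the factorization
$C^\ast(S)\xrightarrow{\eta}C^\ast_0(I_l(S))\xrightarrow{\Lambda_0}C^\ast_{r,0}(I_l(S))\xrightarrow{h}C^\ast_r(S)$
of the canonical map, where $S:=R\rtimes R^\times$. First I would record three facts about $S$. (a) Since $R$ is a GCD domain, Proposition \ref{prop:axpbclifford} gives that $S$ satisfies Clifford's condition. (b) Let $K$ be the field of fractions of $R$; the inclusion $R\hookrightarrow K$ induces an injective semigroup homomorphism $S=R\rtimes R^\times\hookrightarrow K\rtimes K^\times=:G$ (the defining product formula is the same), and $G$ is a genuine group because $(K^\times,\cdot)$ is a group and $(K,+)$ is a group; in particular $S$, being a subsemigroup of a group, is cancellative. (c) $G$ is amenable, being an extension of the abelian group $(K,+)$ by the abelian group $(K^\times,\cdot)$, and extensions of amenable groups by amenable groups are amenable.

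With these in hand, all three maps in the factorization are isomorphisms. By Proposition \ref{prop:gisomorphism}, fact (a) makes $\eta$ an isomorphism. By Corollary \ref{cor:cliffordhisomorphism}, facts (a) and (b) make $h$ an isomorphism. By Proposition \ref{prop:amenableisomnuclear}, the embedding of $S$ into the amenable group $G$ provided by facts (b) and (c) makes $\Lambda_0$ an isomorphism and shows in addition that $C^\ast_0(I_l(S))\simeq C^\ast_s(S)$ and $C^\ast_r(S)$ are nuclear. Since the canonical $\ast$-homomorphism $C^\ast(S)\to C^\ast_r(S)$ equals $h\circ\Lambda_0\circ\eta$, it is an isomorphism; and then $C^\ast(S)\simeq C^\ast_r(S)$ (equivalently $C^\ast(S)\simeq C^\ast_0(I_l(S))$) is nuclear.

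There is essentially no obstacle here beyond correctly invoking the earlier propositions; the only point that requires a moment's thought is the verification in fact (c) that the ambient $ax+b$-group $K\rtimes K^\times$ over the field of fractions is amenable (it is metabelian), since this is precisely the hypothesis needed to apply Proposition \ref{prop:amenableisomnuclear}.
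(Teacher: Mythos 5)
Your proposal is correct and follows essentially the same route as the paper: the paper likewise invokes Proposition \ref{prop:axpbclifford} together with Proposition \ref{prop:gisomorphism} for $\eta$, Corollary \ref{cor:cliffordhisomorphism} for $h$, and Proposition \ref{prop:amenableisomnuclear} (via the embedding into an amenable group) for $\Lambda_0$ and nuclearity. The only difference is that the paper cites Li's remark for the embedding into an amenable group, whereas you spell out that the ambient group is the metabelian $ax+b$-group over the field of fractions; this is a harmless and correct elaboration.
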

\begin{proof}
As Li remarks \cite{li11}, $R\rtimes R^\times$ embeds into an amenable group, so $\Lambda_0$ is an isomorphism and $C^\ast_0(I_l(R\rtimes R^\times))$ is nuclear by Proposition \ref{prop:amenableisomnuclear}. By Corollary \ref{cor:cliffordhisomorphism}, $h$ is an isomorphism, and by Proposition \ref{prop:gisomorphism} and \ref{prop:axpbclifford}, $\eta$ is an isomorphism.
\end{proof}

Note that if one also assumes that $R$ is a Dedekind domain, the previous corollary is weaker than the results given in \cite{cuntz_deninger_laca11,li11} since not all rings of integers or Dedekind domains are GCD domains.

\bibliographystyle{plain}
\bibliography{bibliography}

\end{document}